\newtheorem{theorem}{Theorem}[section]
\newtheorem{lemma}[theorem]{Lemma}
\newtheorem{corollary}[theorem]{Corollary}
\def\ex{\textup{E\/}}
\def\eps{\varepsilon}
\def\la{\lambda}
\def\a{\alpha}
\def\be{\beta}
\def\ga{\gamma}
\def\part{\partial}
\newcommand{\beq}{\begin{equation}}
\newcommand{\eeq}{\end{equation}}
\theoremstyle{remark}
\numberwithin{equation}{section}
\date{\today}
\begin{document}
\title[Random pairing model]{On Bollob\'as-Riordan random pairing model of preferential attachment graph. }
\date{}
\author{Boris Pittel}
\address{Department of Mathematics, The Ohio State University, Columbus, OH 43210, USA.}
\email{bgp@math.osu.edu}

\begin{abstract} Bollob\'as-Riordan random pairing model of a preferential attachment graph $G_m^n$ is studied. Let $\{W_j\}_{j\le mn+1}$ be the process of sums of independent exponentials with mean $1$. We prove that the degrees of the first $\nu_m^n:=n^{\frac{m}{m+2}-\eps}$ vertices are jointly, and uniformly, asymptotic to $\{2(mn)^{1/2}\bigl(W^{1/2}_{mj}-W^{1/2}_{m(j-1)}\bigr)\}_{j\in [\nu_m^n]}$, and that with high
probability (whp)  the smallest of these degrees is $n^{\frac{\eps(m+2)}{2m}}$, at least. Next we bound the probability that there exists a pair of large vertex sets without connecting edges,
and apply the bound to several special cases. We propose to measure an influence of a vertex $v$ by the size of a maximal recursive tree (max-tree) rooted at $v$. We show that whp the set of the first $\nu_m^n$ vertices does not contain a max-tree,
and the largest max-tree has size of order $n$. We 
prove that, for $m>1$, $\Bbb P(G_m^n\text{ is connected})\ge 1- O\bigl((\log n)^{-(m-1)/3+o(1)}\bigr)$. We show that  the distribution of scaled size
of a generic max-tree in $G_1^n$ converges to a mixture of two beta distributions.
\end{abstract}
\keywords
{random graphs, chord diagrams, pairings, order statistics, asymptotics, degrees, vertex expansion, recursive trees}

\subjclass[2010] {05C05, 05C07, 05C30, 05C80, 60C05}

\maketitle

\section{Definitions,  main results}

In  $1999$ Barab\'asi and Albert \cite{BarAlb} proposed a dynamic model of a growing network in which a newcomer vertex
attaches itself to the older vertices with probability distribution strongly favoring the vertices of higher degrees. This paper opened
the floodgate of research which continues unabated twenty years later. For historical accounts and rigorous results we refer the reader 
to Bollob\'as \cite{Bol}, Bollob\'as and Riordan \cite{BolRio},  \cite{BolRio3}, \cite{BolRio4}, Bollob\'as, Riordan, Spencer and
Tusn\'adi \cite{Bol1}, Bollob\'as, Borgs, Chayes, Riordan \cite{BolBorChaRio}, Cooper and Frieze \cite{CooFri}, M\'ori \cite{Mor}, Katona and M\'ori \cite{KatMor}, Pek\"oz, R\"ollin and Ross \cite{PekRolRos},  Pittel \cite{Pit2}, Acan and Hitczenko \cite{AcaHit}, Berger, Borgs, Chayes and Saberi \cite{BerBorChaSab}, Frieze and Karo\'nski
\cite{FriKar}, van der Hofstad \cite{Hof} and  Frieze, P\'erez-Gim\'enez, Pra\l{}at and Reiniger \cite{FGPR}, to name the work the author is most aware of. 

We focus on a rigorously defined  random graph process $\{G_m^t\}_{t=1}^{\infty}$ with preferential attachment introduced and studied by Bollob\'as and Riordan \cite{BolRio}. Liberally citing \cite{BolRio}, consider $m=1$ first. The graphs are nested: $G_1^1\subset G_1^2\subset\cdots$, with $G_1^t$ having 
vertex set $[t]$ and $t$ edges/loops, and with precisely $1$ edge/loop incident to vertex $t$. Therefore for $s<t$ the graph
$G_1^s$ is a subgraph of $G_1^t$ induced by $[s]$. In particular, $G_1^1$ has a single vertex $1$ and a single loop. A loop contributes $2$ to a vertex degree.  
It is postulated that, given $G_1^{t-1}$,  vertex $t$ attaches by an edge  to a vertex $s\in [t-1]$ ($t$ develops a loop resp.) with probability proportional to the degree of $s$ in $G_1^{t-1}$ (with probability proportional to $1$ resp.). For $m>1$,  $m$ edges
emanating from $t$ are added, one at a time, to $G_m^{t-1}$, at each of $m$ steps ``counting the previous edges as well as the 'outward
half' of the edge being added as already contributing to the degrees''. They demonstrated that to  get the graph $G_m^n$, one takes the graph $G_1^{mn}$ and considers the consecutive $m$-long blocks of vertices of $G_1^{mn}$ as vertices $1,\dots,n$.

%For $t=2$, the options are $s=1$ and $s=2$, with 
%the resulting degrees  of $s$ being $2$ and $1$ respectively, whence the probabilities are $2/3$ and $1/3$. 
 
The authors of \cite{BolRio} discovered that this process can be gainfully described in terms of {\it linearized chord diagrams} 
LCD, a rich combinatorial scheme from the enumerative knots theory. Earlier Stoimenov \cite{Sto}, Bollob\'as and Riordan \cite{BolRio*} 
and Zagier \cite{Zag} used the LCDs
to upper bound the total number of independent Vassiliev invariants of a given degree. 
An LCD with $n$ chords consists of $2n$ distinct points on the $x$-axis matched by semi-circular chords in the upper half plane. There are $(2n-1)!!$ such matchings. Given a diagram $L$ we associate with $L$ a graph $\phi(L)$ with $n$ vertices as follows. There are $n$ right endpoints of the $n$ arcs. Vertex $1$ of
$\phi(L)$ consists of all endpoints starting from the leftmost point up to and including its match, i. e. the first right endpoint. Vertex $2$ consists of all subsequent endpoints all the way up to and including the second right endpoint, and so on, up to the last, $n$-th, right endpoint.
$\phi(L)$ has an edge $(i,j)$, $i< j$, if and only if there is an arc with the left endpoint and the right endpoint from the point sets
associated with vertex $i$ and vertex $j$ respectively. There is a (single) loop at vertex $i$ if the right endpoint, i. e. the rightmost point of the set associated with $i$, is matched with another point of the set. 

The authors of \cite{BolRio} claimed and proved, by induction, that if $L$ is chosen uniformly at random from all $(2n-1)!!$ LCDs with $n$ chords then $\phi(L)$ has the same distribution as $G_1^n$. This is crucial, since as an option it allows to study $G_1^n$ directly, ``without going through the process''.  
The random graph $\phi(L)(\overset{\mathcal D}\equiv G_1^{N})$, $N=mn$, was used in \cite{BolRio} to prove a deep result on a likely upper bound for the diameter of $G_m^n$. A key idea was to view the uniformly random pairing  of $2N$ points as being induced by the sequence of $2N$
independent, $[0,1]$-uniform random variables $X_i$. We pair $X_{2i-1}$ with $X_{2i}$ and then relabel all $X_i$ in the ascending order. 
The resulting pairing of the elements of $[2N]$ is uniformly random on the set of all $(2N-1)!!$ pairings. Observe that
$r_i\boldsymbol{\boldsymbol{:}}=\max\{X_{2i-1},X_{2i}\}$ has density $2x$, and conditioned on $r_i$, $\ell_i\boldsymbol{\boldsymbol{:}}=\min\{X_{2i-1},X_{2i}\}$ is distributed uniformly on
$[0,r_i]$. Let $R_1,\dots,R_N$ be the values of $r_1,\dots,r_N$ sorted in the increasing order. By the discussion of $\phi(L)$, its structure
is determined by the number of left endpoints in each of the intervals $(R_{i-1},R_i]$, $0< i\le N$, $R_0=0$. 

%This ingenious idea was used in \cite{BolRio} to prove a sharp upper bound for the diameter of $G_m^n$.  
In the preliminary comments the authors of \cite{BolRio} discussed persistent technical  complications arising from non-uniformity of the right endpoints distribution. It occurred to us that  the right endpoints
become exactly uniform if the random variables $x_i$ have density $f(x)=\frac{1}{2\sqrt{x}}$. In this case, conditioned on $r_i=\max\{x_{2i-1},x_{2i}\}$, $\ell_i=\min\{x_{2i-1}, x_{2i}\}$ has density $\frac{1}{2\sqrt{r_ix}}$, $x\in [0,r_i]$. The advantage of having $r_i$ uniform is that the order statistics $R_1,\dots, R_N$ can be described quite explicitly. Introduce the sequence of independent exponentials 
 $w_i$, $\Bbb P(w_i\ge x)=e^{-x}$, $x\ge 0$. Let $W_j=\sum_{i\in [j]} w_i$; then 
 \[
\bold R\boldsymbol{\boldsymbol{:}}= \{R_j\}_{j\in [mn]}\overset{\mathcal D}\equiv\boldsymbol{\mathcal R}\boldsymbol{:}=\{W_j/W_{mn+1}\}_{j\in [mn]},
 \]
see Karlin and Taylor \cite{KarTay} or R\'enyi \cite{Ren}. (We stumbled on these sources back in $1988$ while studying the random stable matchings, \cite{Pit1}.) Remarkably, Bollob\'as and Riordan \cite{BolRio3}, \cite{BolRio4} (Thm 17) had found that  the density $f(x)=\frac{1}{2\sqrt{x}}$ whence independent exponentials and their sums, arise {\it asymptotically\/} in their pairing model with the $[0,1]$-Uniforms $x_i$ as well. See also Frieze and Karo\'nski \cite{FriKar}, Exer. 17.4.5. 

Our aim in this paper is to contribute to analysis of the Bollob\'as-Riordan model $G_m^n$ based on the $f(x)$-modification of their random sequence $\{ x_i\}$. A main thread running through the proofs is that, conditioned on $\bold W=\{W_j\}$, the edge-indicators (the "no/edge"
indicators resp.) are either independent or negatively independent, which leads to the Chernoff-type and the product-type bounds for the
conditional probabilities, amenable to asymptotic estimates based on the properties of the process $\bold W$. 
 
 In Section 2.1 (Theorem \ref{degs,m}), we prove that  the degrees of the first $\nu:=n^{\frac{m}{m+2}-\eps}$ vertices are jointly, and uniformly, asymptotic to $\{2(mn)^{1/2}$\linebreak $\bigl(W^{1/2}_{mj}-W^{1/2}_{m(j-1)}\bigr)\}_{j\in [\nu]}$. (For the finite $\nu$ the convergence with rates was established in \cite{PekRolRos}; an alternative proof without rates was given in \cite{AcaHit}. Years earlier a closely related result on the limiting behavior of the maximum degree was stated, with a proof sketch,  in \cite{BolRio4} (Thm 17).)
It follows that with high probability (whp) the first $n^{\frac{m}{m+2}-\eps}$ vertex degrees are each at least $n^{\frac{\eps(m+2)}{2m}}$. In contrast (Theorem \ref{second}), the degrees of the vertices from the interval, say
 $[0.01n, 0.99 n]$ whp are all of logarithmic order, at most.
 
In Section 2.3 we prove that whp the multigraph $G_m^n$ is almost simple:  the total number of loops is asymptotic to $\frac{m+1}{4}\log n$  (Theorem \ref{Ln}) and the total number of pairs of parallel edges is asymptotic to $\frac{m^2-1}{16}\log^2 n$ (Theorem \ref{mathcalP<}). The key to the proof is that, conditioned on $\bold W$, the
numbers of loops at distinct vertices are independent, and the numbers of parallel edges joining distinct pairs of vertices are negatively
associated. (It was proved in \cite{BolRio4} that the expected number of triangles in $G_m^n$ is asymptotic to $\frac{m(m^2-1)}{48} \log^3n$.)
 
Following the  lead of \cite{FGPR}, in Section 3 (Theorem \ref{Emu,nu<}) we bound the probability that there exists a pair of large vertex sets with no edges joining them.  We use the bound to show that if $m$ is large then whp there are no such pairs with each set of cardinality $\gtrsim \frac{4\log m}{m} n$. In \cite{FGPR} whp non-existence was proved for two sets each of cardinality $\gtrsim \frac{16\log m}{m} n$. We also use
Theorem \ref{Emu,nu<} to show that, for $m$
large, {\bf (a)\/} with probability $>1-\exp(-\a(\rho) n)$  every vertex set of cardinality strictly between $n f(\rho)\left(\frac{\log m}{m}\right)^2$ and $\frac{n}{1+\rho}(1-m^{-1}g(\rho))$
is vertex-expanding with rate $\ge \rho$, and {\bf (b)\/} for $\ga>0$, with probability $> 1-\exp(-\be(\ga)n)$ there are no isolated sets of cardinality between $\phi(\ga)\left(\frac{\log m}{m}\right)^2$ and $(\ga+2)^{-1}$.
 
In Section 4 
%(Theorem \ref{Cn})  we show that whp the size of the largest clique is $\lesssim \frac{2\log n}{\log\log n}$. Finally
we analyze the sizes of recursive
subtrees in $G_m^n$; ``recursive'' means that the vertices increase along every path emanating from the oldest vertex of a subtree. The size of a maximal recursive tree rooted at a given vertex $v$ can be viewed, we think, as an influence measure of $v$. We show (Theorem \ref{maxtree})
that whp there are no maximal recursive trees with a vertex set chosen from the first $\nu:=n^{\frac{m}{m+4}-\eps}$ vertices. More generally, for $\mu=o(n)$,
whp no subset of $[\mu]$ of cardinality comparable to $\mu$ is a vertex set of a maximal recursive tree. We show that with positive
limiting probability $G_m^n$ contains a spanning recursive tree, and  whp the size of the largest recursive tree is of order $n$. In Section 4.1 we prove that for $m>1$, with probability $1- O\bigl((\log n)^{-(m-1)/3+o(1)}\bigr)$,  the graph $G_m^n$  is connected..

We conclude this section with the Chernoff-type inequalities, which we use in the proofs. (For the first two Lemmas see \cite[Thms 2.1, 2.8 ] {JLR}.)

\begin{lemma}\label{LemmaA}
Let $X$ be a sum of independent Bernoulli random variables $X_i$ with probabilities $p_1,\dots,p_n$. Denote $\mu =\ex[X]=\sum_i p_i$. Then
\begin{align}
&\Bbb P(X\le \mu-t)  \le e^{-t^2/(2\mu)}  \qquad (t>0), \label{C1} \\
&\Bbb P(|X-\mu| \ge \eps\mu)  \le 2e^{-\eps^2\mu/3} \qquad (0<\eps\le 3/2).		\label{C3}
\end{align}
\end{lemma}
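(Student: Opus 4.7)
The standard route here is the exponential moment (Chernoff) method. First I would show the generic bound on the moment generating function: for any real $\lambda$, by independence,
$$\E[e^{\lambda X}]=\prod_i\bigl(1+p_i(e^{\lambda}-1)\bigr)\le \exp\bigl(\mu(e^{\lambda}-1)\bigr),$$
using $1+y\le e^y$ in each factor. This is the single workhorse inequality for both parts.

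For \eqref{C1}, I would apply Markov with $\lambda<0$:
$$\Bbb P(X\le \mu-t)=\Bbb P(e^{\lambda X}\ge e^{\lambda(\mu-t)})\le \exp\bigl(-\lambda(\mu-t)+\mu(e^{\lambda}-1)\bigr).$$
Optimizing gives $\lambda=\log(1-t/\mu)$ and the exponent becomes $-\mu\,h(t/\mu)$, where $h(u)=(1-u)\log(1-u)+u$. The estimate then reduces to the calculus fact $h(u)\ge u^2/2$ on $[0,1]$, which follows by noting $h(0)=h'(0)=0$ and $h''(u)=1/(1-u)\ge 1$.

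For \eqref{C3} I would handle the two tails separately and add. The lower-tail piece is already provided by \eqref{C1} with $t=\eps\mu$. For the upper tail, take $\lambda>0$, obtain $\Bbb P(X\ge\mu+t)\le \exp(-\lambda(\mu+t)+\mu(e^{\lambda}-1))$, and optimize at $\lambda=\log(1+t/\mu)$; the exponent becomes $-\mu\,g(t/\mu)$ with $g(u)=(1+u)\log(1+u)-u$. Summing yields the factor $2$ in front. The main (only) technical point is the inequality $g(u)\ge u^2/3$ valid on $[0,3/2]$, which explains the restriction $\eps\le 3/2$ in the statement; this is a one-variable verification, e.g.\ comparing Taylor expansions or checking monotonicity of $g(u)-u^2/3$ up to $u=3/2$.

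The main obstacle is the sharpness of the elementary bounds on $h$ and $g$: any slack in these translates directly into slack in the final exponent. Everything else is mechanical. Of course, since the lemma is quoted verbatim from \cite{JLR}, the cleanest presentation would simply cite the reference and omit the computations above.
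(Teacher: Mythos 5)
The paper does not prove this lemma at all: it simply refers the reader to \cite{JLR}, Theorems 2.1 and 2.8. Your argument is the standard Chernoff proof given there and it is correct — the MGF bound $\E[e^{\lambda X}]\le\exp(\mu(e^\lambda-1))$, the optimal choices $\lambda=\log(1\mp t/\mu)$ yielding the exponents $-\mu h(t/\mu)$ and $-\mu g(t/\mu)$ with $h(u)=(1-u)\log(1-u)+u$ and $g(u)=(1+u)\log(1+u)-u$, and the elementary facts $h(u)\ge u^2/2$ on $[0,1]$ (since $h''\ge1$) and $g(u)\ge u^2/3$ on $[0,3/2]$ (one checks $g-u^2/3$ vanishes to second order at $0$, its derivative is first positive then negative, and its value at $u=3/2$ is $\tfrac52\log\tfrac52-\tfrac94\approx0.04>0$). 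The one small point worth making explicit is that \eqref{C1} holds trivially when $t\ge\mu$ since then $X\le\mu-t\le0$ forces $X=0$, with probability $\prod(1-p_i)\le e^{-\mu}\le e^{-t^2/(2\mu)}$. As you yourself note, the presentation that matches the paper is just to cite the reference.
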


\begin{lemma}\label{LemmaB}
\begin{align}
\Bbb P(X\ge \mu+t)&\le \exp\left(\!-\mu\phi\left(\frac{t}{\mu}\right)\!\right), \quad t\ge 0,\label{X>mu}\\
\Bbb P(X\le \mu-t)&\le \exp\left(\!-\mu\phi\left(\frac{-t}{\mu}\right)\!\right), \quad t\in [0,\mu],\label{X<mu}\\
\phi(x)&\boldsymbol{\boldsymbol{:}}=(1+x)\log(1+x)-x.
\end{align}
\end{lemma}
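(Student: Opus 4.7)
The plan is to prove both inequalities by the classical Chernoff exponential-moment method, so the two statements are really a single computation performed twice with different signs. I would work with the moment generating function of $X$, apply Markov's inequality to $e^{\lambda X}$ for an appropriate sign of $\lambda$, and then optimize in $\lambda$ to uncover the function $\phi$.

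For \eqref{X>mu}, fix $\lambda>0$ and write
\begin{equation*}
\Bbb P(X\ge \mu+t) \;=\; \Bbb P(e^{\lambda X}\ge e^{\lambda(\mu+t)}) \;\le\; e^{-\lambda(\mu+t)}\,\E[e^{\lambda X}].
\end{equation*}
By independence, $\E[e^{\lambda X}]=\prod_{i}\bigl(1+p_i(e^{\lambda}-1)\bigr)$, and the elementary bound $1+u\le e^{u}$ gives
\begin{equation*}
\E[e^{\lambda X}] \;\le\; \exp\!\Bigl(\sum_i p_i(e^{\lambda}-1)\Bigr) \;=\; \exp\!\bigl(\mu(e^{\lambda}-1)\bigr).
\end{equation*}
Thus $\Bbb P(X\ge \mu+t)\le \exp\!\bigl(\mu(e^{\lambda}-1)-\lambda(\mu+t)\bigr)$. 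Differentiating the exponent in $\lambda$ and setting the derivative to zero yields the optimizer $\lambda^{\ast}=\log(1+t/\mu)$, which is legitimate since $t\ge 0$. Plugging $\lambda^{\ast}$ back in and simplifying, the exponent becomes exactly
\begin{equation*}
-\mu\bigl[(1+t/\mu)\log(1+t/\mu)-t/\mu\bigr] \;=\; -\mu\,\phi(t/\mu),
\end{equation*}
which is \eqref{X>mu}.

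For \eqref{X<mu} I would run the same argument with $\lambda<0$: by Markov, $\Bbb P(X\le \mu-t)=\Bbb P(e^{\lambda X}\ge e^{\lambda(\mu-t)})\le e^{-\lambda(\mu-t)}\E[e^{\lambda X}]$, and the same MGF bound holds for every real $\lambda$. The exponent to minimize is $\mu(e^{\lambda}-1)-\lambda(\mu-t)$, whose optimizer over $\lambda\le 0$ is $\lambda^{\ast}=\log(1-t/\mu)$ (well-defined because $t\in[0,\mu]$), and substituting back produces $-\mu\,\phi(-t/\mu)$, as claimed. If $t=\mu$ the statement is interpreted as $\Bbb P(X\le 0)\le e^{-\mu}$, matching $\phi(-1)=1$.

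No step is genuinely hard; the only point that requires a bit of care is checking that $\phi$ is nonnegative on $(-1,\infty)$ so that the bounds are meaningful, and that the optimizing $\lambda^{\ast}$ lies in the correct half-line in each case. Both are immediate from convexity of $\phi$ and the fact that $\phi(0)=\phi'(0)=0$. Since this is the textbook Chernoff bound (exactly as stated in \cite{JLR}, Thm 2.1), I would in practice simply cite the reference and omit the derivation.
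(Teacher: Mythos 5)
The paper does not prove Lemma~\ref{LemmaB}; it simply cites \cite[Thms~2.1, 2.8]{JLR}, which is exactly what you say you would do in practice. Your Chernoff-type derivation (Markov's inequality on $e^{\lambda X}$, the bound $1+u\le e^u$ on the MGF, and optimization at $\lambda^{\ast}=\log(1\pm t/\mu)$) is correct and is precisely the standard argument behind the cited reference, so there is nothing to compare beyond noting that you have supplied the details the paper chose to omit.
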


\begin{lemma}\label{note} The bound \eqref{X>mu}, for any $\mu\ge \ex[X]$, holds also for the negatively associated Bernoulli random variables.
\end{lemma}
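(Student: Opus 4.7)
The plan is to follow the standard Chernoff/Bennett derivation of \eqref{X>mu} verbatim, and to observe that independence is used in exactly one place — the factorization of the moment generating function of the sum — where it can be replaced by the defining inequality of negative association (NA), provided the Chernoff parameter is kept nonnegative.

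First, for $s\ge 0$, Markov's inequality applied to $e^{sX}$ gives
$$\Bbb P(X\ge\mu+t)\ \le\ e^{-s(\mu+t)}\,\E\Big[\prod_i e^{sX_i}\Big].$$
Since $s\ge 0$, each coordinate map $x_i\mapsto e^{sx_i}$ is nondecreasing, so by the standard consequence of NA for products of monotone one-variable functions (which follows from the pairwise definition by induction on the number of factors),
$$\E\Big[\prod_i e^{sX_i}\Big]\ \le\ \prod_i\E[e^{sX_i}].$$
For Bernoulli $X_i$ with mean $p_i$, the elementary inequality $1+p_i(e^s-1)\le \exp(p_i(e^s-1))$ yields $\prod_i\E[e^{sX_i}]\le \exp\bigl((e^s-1)\sum_i p_i\bigr)$. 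Because $e^s-1\ge 0$ and $\sum_i p_i=\E[X]\le\mu$ by hypothesis, the last factor is bounded by $\exp((e^s-1)\mu)$.

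Combining, $\Bbb P(X\ge\mu+t)\le \exp\!\bigl(-s(\mu+t)+\mu(e^s-1)\bigr)$ for every $s\ge 0$. The final step is the classical optimization at $s^{\ast}=\log(1+t/\mu)\ge 0$, producing $\exp(-\mu\phi(t/\mu))$ as required. There is no substantive obstacle; the one point to verify is that both invocations of the sign condition — $s\ge 0$ for the NA inequality, and $e^s-1\ge 0$ for the substitution $\sum_i p_i\mapsto \mu$ — remain valid at the optimizer, which they trivially do whenever $t\ge 0$. Notably, the lower-tail companion \eqref{X<mu} does \emph{not} come for free this way, because the corresponding Markov bound would require $s<0$, rendering $x\mapsto e^{sx}$ nonincreasing and reversing the direction in which one would need to absorb $\E[X]$ into $\mu$.
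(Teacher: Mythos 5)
Your proof is correct, and it reaches the same optimized bound, but the route differs from the paper's in a way worth noting. You bound the moment generating function directly via the product-decoupling consequence of negative association: for $s\ge 0$ the maps $x_i\mapsto e^{sx_i}$ are nondecreasing, so $\E\bigl[\prod_i e^{sX_i}\bigr]\le\prod_i\E\bigl[e^{sX_i}\bigr]$. The paper instead bounds the \emph{factorial} moments, using only the weaker consequence of NA that $\Bbb P(X_{i_1}=\cdots=X_{i_k}=1)\le\prod_j\Bbb P(X_{i_j}=1)$, to get $\ex\bigl[\binom{X}{k}\bigr]\le \mu^k/k!$, and then recovers the same bound on $\ex[z^X]$ from the identity $\ex[z^X]=\sum_k\ex\bigl[\binom{X}{k}\bigr](z-1)^k$ for $z>1$. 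With $z=e^s$ both arguments arrive at $\ex[z^X]\le e^{\mu(z-1)}$ and then optimize identically. What your approach buys is directness and the explicit bookkeeping of where the sign condition $s\ge 0$ is used (which also makes transparent why the lower-tail bound \eqref{X<mu} does not transfer, as you observe). What the paper's approach buys is that it isolates the minimal hypothesis actually used — negative upper-orthant dependence of the indicators on the ``all-ones'' event — which is strictly weaker than full negative association; and expanding in the basis $\binom{X}{k}(z-1)^k$ sidesteps any need to invoke or re-prove the monotone-function decoupling inequality.
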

\begin{proof}
Indeed
\begin{align*}
\ex\left[\binom{X}{k}\right]&=\sum_{i_1<\cdots<i_k}\Bbb P(X_{i_1}=\cdots =X_{i_k}=1)\le\sum_{i_1<\cdots<i_k}\prod_{j=1}^k \Bbb P(X_{i_j}=1)
\\
&\qquad\quad\le \frac{1}{k!}\Bigl(\sum_{j\in [n]} p_j\Bigr)^k=\frac{\ex^k[X]}{k!}\le\frac{\mu^k}{k!},
\end{align*}
implying that, for $z>1$,
\[
\ex\bigl[z^X\bigr]=\sum_{k=0}^n\ex\left[\binom{X}{k}\right](z-1)^k\le \sum_{k=0}^n \frac{\mu^k}{k!}(z-1)^k\le e^{\mu(z-1)}.
\]
Therefore
\[
\Bbb P(X\ge \mu+t)\le \inf_{z>1}\frac{\ex\bigl[z^X\bigr]}{z^{\mu+t}}\le \inf_{z>1} \frac{e^{\mu(z-1)}}{z^{\mu+t}}=\exp\left(\!-\mu\phi\left(\frac{t}{\mu}\right)\!\right).
\]
%and the RHS attains its minimum at $z=(\mu+t)/t$.
\end{proof}
We will also need Chernoff-type inequalities for the sums of $w_i$.

\begin{lemma}\label{W} Let $W_{\nu}=\sum_{a=1}^{\nu} w_a$, where $\{w_a\}$ are  independent exponentials with $\ex[w]=1$. Then, (1) for every
$\a>1$,
\begin{equation}\label{C4}
\Bbb P( W_{\nu}\ge \a\nu)\le\exp\bigl(-\nu\phi(\a)\bigr),\quad \phi(z)=z-\log z -1;
\end{equation}
(2) for every $\a<1$,
\begin{equation}\label{C4'}
\Bbb P( W_{\nu}\le \a\nu)\le\exp\bigl(-\nu\phi(\a)\bigr).
\end{equation}
Consequently, for $\sigma\in (0,1/2)$,
\begin{equation}\label{C4''}
\Bbb P\Bigl(|W_{\nu}-\nu|\le \nu^{1-\sigma},\,\,\forall\,\nu\ge N)\ge 1-\exp\bigl(-\Theta(N^{1-2\sigma})\bigr).
\end{equation}
\end{lemma}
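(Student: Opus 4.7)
The plan for parts (1) and (2) is to apply the Chernoff--Cram\'er method using the moment generating function of $W_\nu$. Since $w_a$ is a mean-one exponential, $\mathbb{E}[e^{tw_a}]=(1-t)^{-1}$ for $t<1$, whence $\mathbb{E}[e^{tW_\nu}]=(1-t)^{-\nu}$ by independence. For the upper tail in (1), I would apply Markov's inequality to $e^{tW_\nu}$ with $t\in(0,1)$, giving $\Bbb P(W_\nu\ge\alpha\nu)\le e^{-t\alpha\nu}(1-t)^{-\nu}$, and then minimize in $t$: the optimum is at $t=1-1/\alpha>0$ (using $\alpha>1$), and substitution yields precisely $\exp(-\nu(\alpha-1-\log\alpha))=\exp(-\nu\phi(\alpha))$. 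For the lower tail in (2), I would use the MGF of $-W_\nu$, namely $(1+t)^{-\nu}$, valid for all $t>0$, obtain $\Bbb P(W_\nu\le\alpha\nu)\le e^{t\alpha\nu}(1+t)^{-\nu}$, and optimize at $t=1/\alpha-1>0$ (using $\alpha<1$) to recover the same formula $\exp(-\nu\phi(\alpha))$.

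For part (3), the plan is to instantiate (1) and (2) with $\alpha=1\pm\nu^{-\sigma}$ and then take a union bound over $\nu\ge N$. From $\phi(1)=\phi'(1)=0$ and $\phi''(1)=1$, Taylor expansion gives $\phi(1+x)=x^2/2+O(|x|^3)$ as $x\to 0$, so for $N$ sufficiently large and all $\nu\ge N$,
\[
\nu\,\phi\bigl(1\pm\nu^{-\sigma}\bigr)\ge\tfrac14\,\nu^{1-2\sigma}.
\]
Combining (1) and (2) at these choices of $\alpha$ then yields $\Bbb P(|W_\nu-\nu|>\nu^{1-\sigma})\le 2\exp(-\tfrac14\nu^{1-2\sigma})$ uniformly in $\nu\ge N$.

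Finally, I would sum over $\nu\ge N$ via an integral comparison: the substitution $u=\tfrac14 x^{1-2\sigma}$ reduces $\int_N^\infty\exp(-\tfrac14 x^{1-2\sigma})\,dx$ to an incomplete-gamma integral, which for $1-2\sigma\in(0,1)$ is bounded by $C\,N^{2\sigma}\exp(-\tfrac14 N^{1-2\sigma})$. The polynomial prefactor is negligible because $N^{2\sigma}=\exp(o(N^{1-2\sigma}))$ for $\sigma<1/2$, so the union bound collapses to $\exp(-\Theta(N^{1-2\sigma}))$, as claimed. There is no substantive obstacle here --- this is a standard large-deviation-plus-union-bound calculation --- but the one point requiring genuine attention is precisely this last step, where one must verify that extending the union bound over an infinite range of indices does not spoil the exponential decay rate dictated by the leading term at $\nu=N$.
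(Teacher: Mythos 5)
Your proof is correct, and since the paper states Lemma~\ref{W} without supplying a proof (it only cites standard Chernoff-type inequalities elsewhere and proves the closely related Lemma~\ref{W1}), there is no authorial argument to compare against. Your approach is the standard one the author would expect: Chernoff--Cram\'er bounds via the moment generating function $(1-t)^{-\nu}$ (resp.\ $(1+t)^{-\nu}$) optimized at $t=1-1/\alpha$ (resp.\ $t=1/\alpha-1$), followed by a quadratic Taylor expansion of $\phi$ near $1$ and a union bound over $\nu\ge N$. The one place you rightly flag as needing care, absorbing the infinite tail sum, is handled adequately: the integral comparison and the observation that the polynomial prefactor $N^{2\sigma}$ is $\exp(o(N^{1-2\sigma}))$ for $\sigma<1/2$ are exactly what is required, and indeed the hypothesis $\sigma<1/2$ in the statement is there precisely to make this step work.
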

 
 \begin{lemma} \label{W1} Let $V_{\mu}=\sum\limits_{k\in [\mu]} d_{k}w_{k}$, $d_{k}\ge 0$. Then for  every $\a\in (0,1)$
\begin{equation}\label{C5}
\Bbb P\Bigl(V_{\mu}\le (1-\a)\sum_{k\in [\mu]}d_k\Bigr)\le \exp\left(-\frac{\a^2}{2}\cdot\frac{\left(\sum_k d_k\right)^2}{\sum_kd_k^2}\right).
\end{equation}
\end{lemma}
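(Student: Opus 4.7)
The plan is to use the exponential moment (Chernoff) method, since $V_\mu$ is a sum of independent nonnegative random variables and we want a lower-tail bound. Writing $S:=\sum_k d_k$, for any $\lambda>0$ we have
\[
\Bbb P(V_\mu\le (1-\a)S)=\Bbb P(e^{-\lambda V_\mu}\ge e^{-\lambda(1-\a)S})\le e^{\lambda(1-\a)S}\,\E\bigl[e^{-\lambda V_\mu}\bigr].
\]
By independence of the $w_k$ and the identity $\E[e^{-s w}]=(1+s)^{-1}$ valid for $s\ge 0$,
\[
\E\bigl[e^{-\lambda V_\mu}\bigr]=\prod_{k\in[\mu]}\frac{1}{1+\lambda d_k},
\]
so the upper bound on the probability becomes $\exp\bigl(\lambda(1-\a)S-\sum_k \log(1+\lambda d_k)\bigr)$.

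Next I would invoke the elementary inequality $\log(1+x)\ge x-x^2/2$ valid for all $x\ge 0$ (the derivative of the difference is $x^2/(1+x)\ge 0$, and both sides vanish at $0$). Since $\lambda d_k\ge 0$, this gives
\[
\sum_{k\in[\mu]}\log(1+\lambda d_k)\ge \lambda S-\frac{\lambda^2}{2}\sum_{k\in[\mu]}d_k^2,
\]
and therefore the exponent is at most $-\lambda\a S+\tfrac{\lambda^2}{2}\sum_k d_k^2$. This is a quadratic in $\lambda$ minimized at $\lambda^\ast=\a S/\sum_k d_k^2>0$; plugging in yields the exponent $-\tfrac{\a^2}{2}\cdot S^2/\sum_k d_k^2$, which is precisely the claimed bound.

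There is essentially no obstacle: the only point requiring the slightest care is to check that the optimal $\lambda^\ast$ is positive (which follows from $\a\in(0,1)$ and $d_k\ge 0$ not all zero; the degenerate case $S=0$ is trivial) and that the inequality $\log(1+x)\ge x-x^2/2$ is used on the correct side of zero. Note that the bound is sharp up to constants when the $d_k$ are nearly equal, in which case $S^2/\sum_k d_k^2\asymp \mu$ and the estimate reduces, as expected, to the usual Gaussian-type lower tail bound for the sum of $\mu$ i.i.d.\ exponentials, compatible with \eqref{C4'} of Lemma \ref{W}.
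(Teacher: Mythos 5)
Your proof is correct and follows essentially the same route as the paper: a Chernoff bound $\Bbb P(V_\mu\le(1-\a)S)\le e^{\lambda(1-\a)S}\prod_k(1+\lambda d_k)^{-1}$, followed by the elementary bound $\log(1+x)\ge x-x^2/2$ (equivalently $(1+x)^{-1}\le e^{-x+x^2/2}$), and optimization at $\lambda=\a S/\sum_k d_k^2$. The paper's $z$ is your $\lambda$; there is no substantive difference.
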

\begin{proof}  Let $z>0$. The probability in question is at most
\begin{align*}
&\exp\bigl(z(1-\alpha)\ex[V_{\mu}]\bigr)\cdot\ex\bigl[\exp(-z V_{\mu})\bigr]=\prod_{k\in [n]} \exp(z(1-\alpha)d_k)(1+zd_k)^{-1}\\
&\le\! \exp\!\Biggl(\sum_k \!\left(\!z(1-\a)d_k -zd_k+\frac{z^2d_k^2}{2}\right)\!\!\Biggr)=\exp\Biggl(\!-z\a\sum_k d_k+\frac{z^2}{2}\sum_k d_k^2\Biggr),
\end{align*}
and the last exponent attains its minimum at $z=\a(\sum_k d_k)/(\sum_k d_k^2)$.
\end{proof}
\section{Vertex degrees.} 

%We use the Bollob\'as-Riordan pairing model  for analysis of the vertex degrees in $G_m^n$. 
%Let $D_1^n(j)$ denote
%the degree of vertex $j\ge 1$ in $G_1^n$. Then, using $\Bbb I(A)$ for the indicator of an event $A$, we have
%\begin{equation}\label{D1n(j)=}
%D_1^n(j)=1+\sum_{i> j}\Bbb I(R_{j-1}<\ell_i < R_j),\quad R_0\boldsymbol{\boldsymbol{:}}=0.
%\end{equation}
%For $j=1$,  the term ``$1$'' on the RHS of \eqref{D1n(j)=} accounts for the edge joining the left endpoint $\ell_1$ to $R_1$; for $j>1$, ``$1$'' accounts for the edge joining the left endpont $\ell_j$ to the right endpoint $R_j$. We may, and will view the order statistics
%$R_j$ as $W_j/W_n$, where $W_j=\sum_{i\in [j]}w_i$, and $w_i$ are independent exponentials with parameter $1$.
%Turn to the vertex degrees of $G_m^n$. 

We generate $G_1^{mn}$ as follows. Start with the sequence $\{w_j\}_{j\in [mn+1]}$ of independent exponentials and
introduce the sums $W_j=\sum_{i\in [j]} w_i$. Define the sequence $\bold R=\{R_k\}_{k\in [mn]}$, where $R_k=W_k/W_{mn+1}$. Each $R_k$
is the right endpoint of the pair $(\ell_k, R_k)$, and conditioned on $\bold W$, the variables $\ell_k$ are independent, with densities
$\frac{1}{2\sqrt{R_kx}}$, $x\in (0,R_k]$. The resulting LCD has the same distribution as $G^1_{mn}$. As mentioned in Introduction, the graph $G_m^n$ with $n$ vertices is obtained from $G_1^{mn}$ by forming $m$-long consecutive blocks of vertices of $G_1^{mn}$. 

Let $D(j)$ denote the degree of vertex $j$ in $G_m^n$. Then
\begin{equation}\label{Dmn(j)=}
D(j)=d(j)+\sum_{i>mj}\Bbb I\bigl(R_{m(j-1)}<\ell_i<R_{mj}\bigr),\quad j\in [n],\,\,(R_0\boldsymbol{\boldsymbol{:}}=0).
\end{equation}
The term ``$d(j)$'' on the RHS of \eqref{Dmn(j)=} accounts for the $m$ chords joining the $m$ right endpoints $R_{m(j-1)+1},\dots, R_{mj}$, 
belonging to vertex $j$, with their respective left endpoints $\ell_{m(j-1)+1},\dots, \ell_{mj}$. We get a loop at $j$ each time when a left endpoint happens to belong to vertex $j$; so $d(j)\in [m,2m]$. The sum is the total count of edges that join the left
endpoints from the $j$-th vertex in $G_m^n$ with the respective right endpoints belonging to the subsequent vertices $j+1,\dots,n$.
\subsection{Joint degree distribution of the first $n^{\frac{m}{m+2}-\eps}$ vertices.}
\begin{theorem} \label{degs,m} Let
\begin{align*}
&\qquad\mathcal D(j)=%\frac{2(mn)^{1/2}\sum_{i=(m(j-1)+1}^{mj}w_i}{\sqrt{W_{mj}}+\sqrt{W_{m(j-1)}}}=
2(mn)^{1/2}\bigl(W_{mj}^{1/2}-W_{m(j-1)}^{1/2}\bigr),\\
&\qquad\qquad j_n=\lfloor n^a\rfloor,\quad a\in (0,m/(m+2)), \\
&\delta <[1-a(m+2)/m]/4,\quad b<(1-a(m+2)/m)/2.
\end{align*}
 Then {\bf (1)\/} 
\begin{align*}
&\Bbb P\left(D(j)=\bigl(1+O(n^{-\delta})\bigr)\mathcal D(j);\,\,\forall\,j\le j_n\right)\ge 1-n^{-\Delta},\\%1-n^{-[1-a(m+2)/m]/2+2\delta},
&\quad\forall\,\Delta<\min\Bigl(\frac{1-a}{2}-\delta;\,\frac{m}{2}\Bigl(1-\frac{a(m+2)}{m}\Bigr)-2m\delta\Bigr).
\end{align*}
{\bf (2)\/} Consequently 
\begin{align*}
&\quad\,\,\Bbb P\Bigl(\min_{j\le j_n}D(j)>n^b\Bigr)\ge 1-n^{-\Delta_1},\\
&\forall\,\Delta_1<\min\left(\Delta;\,m\bigl[\bigl(1-\tfrac{a(m+2)}{m}\bigr)/2-b\bigr]\right).
\end{align*}
Thus whp the first $n^{m/(m+2)-\eps}$ vertex degrees are each at least $n^{\eps(m+2)/2m}$.
\end{theorem}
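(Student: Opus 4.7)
The plan is to condition on $\mathbf W=\{W_k\}$: given $\mathbf W$, $D(j)-d(j)$ is a sum of independent Bernoullis over $\ell_i$ with $i>mj$, so both its conditional mean and its fluctuations are accessible via Chernoff. Part (1) reduces to showing the conditional mean is uniformly $(1+O(n^{-\delta}))\mathcal D(j)$ and then applying \eqref{C3}. Part (2) follows from a Gamma small-ball lower bound on $\mathcal D(j)$ itself.

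\textbf{Conditional mean and its approximation.} The conditional density of $\ell_i$ given $\mathbf W$ is $\frac{1}{2\sqrt{R_ix}}$ on $(0,R_i]$; integrating over $(R_{m(j-1)},R_{mj}]$ gives the Bernoulli parameter
$$p_{i,j}=\frac{\sqrt{R_{mj}}-\sqrt{R_{m(j-1)}}}{\sqrt{R_i}}=\frac{\sqrt{W_{mj}}-\sqrt{W_{m(j-1)}}}{\sqrt{W_i}}$$
(the $\sqrt{W_{mn+1}}$ factors cancel), so $\mu_j=\bigl(\sqrt{W_{mj}}-\sqrt{W_{m(j-1)}}\bigr)\sum_{i=mj+1}^{mn}1/\sqrt{W_i}$. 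Pick $\sigma\in(\delta,1/2)$; Lemma \ref{W}, estimate \eqref{C4''}, yields $|W_i-i|\le i^{1-\sigma}$ uniformly for $i\ge i_0(n)$ with superpolynomially small failure probability, and contributions from $i\le i_0$ are $O(\log n)$, absorbable. Then $1/\sqrt{W_i}=(1+O(i^{-\sigma}))/\sqrt i$, and summing with $\sum_{i=mj+1}^{mn}1/\sqrt i=2\sqrt{mn}-2\sqrt{mj}+O(1)$ gives
$$\mu_j=\mathcal D(j)\bigl(1+O(\sqrt{j/n}+n^{-\sigma})\bigr)=\mathcal D(j)\bigl(1+O(n^{-\delta})\bigr)$$
uniformly in $j\le j_n$, using $j\le n^a$ with $\delta<(1-a)/2$ and $\delta<\sigma$.

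\textbf{Chernoff and small-ball.} On this $\mathbf W$-event, \eqref{C3} gives
$$\Bbb P\bigl(|D(j)-d(j)-\mu_j|>n^{-\delta}\mu_j\bigm|\mathbf W\bigr)\le 2\exp\bigl(-n^{-2\delta}\mu_j/3\bigr),$$
and $d(j)\in[m,2m]$ contributes only $O(1)$, negligible against $\mathcal D(j)\to\infty$. Uniformity in $j$ requires a lower bound on $\min_{j\le j_n}\mathcal D(j)$. Since $W_{mj}-W_{m(j-1)}\sim\mathrm{Gamma}(m)$ and $W_{mj}^{1/2}-W_{m(j-1)}^{1/2}\gtrsim (W_{mj}-W_{m(j-1)})/\sqrt{mj_n}$ whp,
$$\Bbb P\Bigl(\min_{j\le j_n}\mathcal D(j)\le x\Bigr)\lesssim j_n^{1+m/2}\,x^m\,n^{-m/2}.$$
For part (1), take $x=n^{2\delta}(\log n)^C$: the conditional Chernoff union bound over $j$ becomes superpolynomially small, and the small-ball rate matches $\Delta<(m/2)(1-a(m+2)/m)-2m\delta$; the competing constraint $\Delta<(1-a)/2-\delta$ records the slack between the deterministic bias $\sqrt{j/n}\le n^{(a-1)/2}$ of the previous step and the required tolerance $n^{-\delta}$. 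For part (2), set $x=n^b$ directly in the small-ball bound to obtain $\Bbb P(\min_j\mathcal D(j)\le n^b)\le n^{-m[(1-a(m+2)/m)/2-b]+o(1)}$, and combine with part (1) (on whose event $D(j)\ge(1-o(1))\mathcal D(j)$) to get $\min_j D(j)>n^b$ with rate $\Delta_1$.

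\textbf{Main obstacle.} The delicate point is that for $j\asymp j_n$ the typical value $\mathcal D(j)\asymp n^{(1-a)/2}$ can be dragged down by a worst-case Gamma fluctuation all the way to $n^{(1-a(m+2)/m)/2}$, and the three error sources—the $\sqrt{j/n}$ bias, the $\mathbf W$-concentration residue, and the Chernoff fluctuation—must each remain within a $(1+O(n^{-\delta}))$ factor across the full range $j\le j_n$. Coordinating these so that both polynomial rates in part (1) emerge cleanly from one unified union bound is the technical heart of the proof.
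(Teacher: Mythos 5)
Your overall strategy matches the paper's: condition on $\mathbf W$, identify the conditional mean $\mu_j(\mathbf W)=(W_{mj}^{1/2}-W_{m(j-1)}^{1/2})\sum_{i>mj}W_i^{-1/2}$, apply the Chernoff bound \eqref{C3} conditionally, and let the $\Gamma(m)$ density of $\Omega_{mj}$ dictate the polynomial rate. Where you diverge is in how you turn the conditional Chernoff bound into an unconditional one. The paper keeps $\mu_j(\mathbf W)\gtrsim n^{(1-a)/2}\log^{-1/2}n\cdot\Omega_{mj}$, feeds the Chernoff bound through $\ex\bigl[e^{-\lambda\Omega_{mj}}\bigr]=(1+\lambda)^{-m}$, and reads off $\Delta<\tfrac m2\bigl(1-\tfrac{a(m+2)}{m}\bigr)-2m\delta$ directly. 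You instead pre-condition on $\min_{j\le j_n}\mathcal D(j)>n^{2\delta}(\log n)^C$, which makes the conditional Chernoff exponent superpolynomial, and cover the complement by the small-ball estimate $\Bbb P(\min_j\mathcal D(j)\le x)\lesssim j_n^{1+m/2}x^m n^{-m/2}$. Both routes extract the same $\Omega^m$-tail and give the same second constraint; yours is a legitimate alternative.

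The genuine gap is in the conditional-mean step, and it is also the true source of the constraint $\Delta<(1-a)/2-\delta$, which you misattribute. Split $\sum_{i>mj}W_i^{-1/2}=\Sigma_1+\Sigma_2$ with $\Sigma_1$ over $i\in(mj,mj_n]$ and $\Sigma_2$ over $i>mj_n$. The estimate \eqref{C4''} does make $\Sigma_2=(2+O(n^{-\delta}))(mn)^{1/2}$ with superpolynomial failure probability. But $\Sigma_1$ is a random variable with $\ex[\Sigma_1]=\Theta(n^{a/2})$, and what is needed is the event $\{\Sigma_1=O(n^{1/2-\delta})\}$; this is precisely what forces the first bound on $\Delta$: Markov gives $\Bbb P(\Sigma_1\ge n^b)=O(n^{-(b-a/2)})$ for $b<1/2-\delta$, hence $\Delta<(1-a)/2-\delta$. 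Your phrase ``contributions from $i\le i_0$ are $O(\log n)$, absorbable'' is only an expectation bound and does not substitute for this Markov step, and your remark that $\Delta<(1-a)/2-\delta$ ``records the slack between the deterministic bias $\sqrt{j/n}$ and the required tolerance $n^{-\delta}$'' is wrong: that slack is the deterministic inequality $\delta<(1-a)/2$ that lets the bias be $O(n^{-\delta})$; it carries no failure probability at all. Insert the Markov bound on $\Sigma_1$ and the argument closes.

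Two smaller points: the small-ball bound should use $W_{mj}^{1/2}+W_{m(j-1)}^{1/2}\lesssim\sqrt{mj\log n}$ for each $j$ (the paper's $\Bbb P_1$ event), not the worst-case $\sqrt{mj_n}$, though summing $j^{m/2}$ over $j\le j_n$ recovers your $j_n^{1+m/2}$ anyway; and when you write $1/\sqrt{W_i}=(1+O(i^{-\sigma}))/\sqrt i$, the resulting additive error $\sum_{i\le mn}i^{-1/2-\sigma}\asymp n^{1/2-\sigma}$ is only relatively $O(n^{-\sigma})$ because the main term is $\asymp\sqrt n$ — you should say so, since $i^{-\sigma}$ is not uniformly $n^{-\sigma}$ over the range.
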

The part {\bf (1)} asserts that the degrees of the first $j_n$ vertices in $G_m^n$ are uniformly asymptotic to the increments of the process
$\bigl\{2(mn)^{1/2} W_{mj}^{1/2}\boldsymbol{:}\,j\in [j_n]\bigr\}$.
 %Invoking Lemma \label{W}, we obtain
%\begin{corollary}\label{degtot} Let $D([j])=\frac{}{j}\sum_{i\in [j]}D(i)$, the arithmetic mean of the first $j$ degrees 

\begin{proof} {\bf (1)\/} For each given $j$, the indicators in equation \eqref{Dmn(j)=}, {\it conditioned on $\bold W=\{W_k\}$\/}, are independent. 
So we anticipate that, conditionally, $D^*(j):=D(j)-d(j)$ (in-degree of $j$) is sharply concentrated around $\mu_j(\bold W):=\ex\bigl[D^*(j)\boldsymbol|\,\bold W]$. Now
\begin{align}
&\mu_j(\bold W)=\sum_{i>m j}\int_{R_{m(j-1)}}^{R_{mj}}\frac{dx}{2(R_i x)^{1/2}}\notag\\
&=\sum_{i> mj}\frac{R^{1/2}_{mj}-R^{1/2}_{m(j-1)}}{R^{1/2}_i}= \sum_{i>m j}
\frac{W^{1/2}_{mj}-W^{1/2}_{m(j-1)}}{W^{1/2}_i}\notag\\
&=\frac{\Omega_{mj}}{W^{1/2}_{mj}+W^{1/2}_{m(j-1)}}\sum_{i> mj} W_i^{-1/2},\quad \Omega_{mj}\boldsymbol{\boldsymbol{:}}=\!\!\sum_{t=m(j-1)+1}^{mj}\!\!\!\!w_t. \label{EDmn(j)=}
\end{align}
Let us estimate $\ex\bigl[D^*(j)\boldsymbol|\,\bold W \bigr]$ for $j\le j_n\boldsymbol{\boldsymbol{:}}=\lfloor n^a\rfloor$, $a\in (0,1)$ to be specified later. By \eqref{C4}-\eqref{C4''}, we have
\begin{equation}\label{|Wi-i|}
\begin{aligned}
&\Bbb P(W_i\le 1.1 i\log n,\,\forall i\ge 1)\ge 1-n^{-1},\\
&\Bbb P(|W_i-i|\le \eps i,\,\forall\, i\ge i(n))\ge 1-\exp(-\Theta(\eps^2i(n))),\quad i(n)\to\infty.
\end{aligned}
\end{equation}
Therefore 
\begin{equation}\label{root+root}
\Bbb P\Bigl(W^{1/2}_{mj}+W^{1/2}_{m(j-1)}\le 2\bigl(1.1mj\log n)^{1/2},\,\forall j\ge 1\Bigr)= 1-O(n^{-1}).%,\quad \forall\,K>0.
\end{equation}
 Further
\begin{equation}\label{sum=sum1+sum2}
\sum_{i> mj} W_i^{-1/2}=\sum_{i\in (mj,mj_n]}\!\!\!W_i^{-1/2} +\sum_{i\in (mj_n,mn]}\!\!\!W_i^{-1/2}=\boldsymbol{\boldsymbol{:}} \Sigma_1+\Sigma_2.
\end{equation}
By \eqref{|Wi-i|} with $\eps=n^{-\delta}$,  ($\delta<(1-a)/2$), we have
\begin{align}
\Sigma_2&=(1+O(\eps))\!\!\sum_{i\in (mj_n,mn]}\!\!i^{-1/2}=(1+O(\eps))\bigl[2(mn)^{1/2}+O(j_n^{1/2})\bigr]\notag\\
&=\bigl(2+O(\eps+n^{(a-1)/2})\bigr)(mn)^{1/2}=\bigl(2+O(\eps)\bigr)(mn)^{1/2}, \label{Sigma2}
\end{align}
with probability at least $1-O\bigl(\exp(-\Theta(\eps^2 j_n))\bigr)=1-O(n^{-K}),\quad \forall\, K>0$.
Therefore
\begin{equation}\label{>mjn}
\Bbb P\Bigl(\sum_{i>mj_n}W_i^{-1/2}\ge \bigl(2+O(n^{-\delta})\bigr)(mn)^{1/2}\Bigr)= 1-O(n^{-K}).
\end{equation}
Combining \eqref{EDmn(j)=}, \eqref{root+root} and \eqref{>mjn}, and using $j\le j_n=\lfloor n^a\rfloor$, we obtain 
\begin{equation}\label{E[D1n(j)|R]=O}
\Bbb P\Bigl(\mu_j(\bold W)\ge 0.5\bigl(n^{(1-a)/2}\log^{-1/2}n\bigr)\Omega_{mj},\,\forall j\le j_n \Bigr)= 1- O(n^{-1}).  
\end{equation}
%Borrowing a term from Knuth et al. \cite{KnuMotPit}, the event in question holds quite surely, q.s. in short.
Applying the estimate \eqref{C3} to $D^*(j)$, we have: with probability $1-O(n^{-1})$,
\begin{multline*}
\Bbb P\left(\Big|D^*(j)-\mu_j(\bold W)\Big|\ge \eps\mu_j(\bold W)\Big|\,\bold W\right)
\\
\le 2\exp\left(-\Theta\Bigl(\eps^2\mu_j(\bold W)\Bigr)\right)
\le \exp\Bigl(-\Theta\bigl(\eps^2\Omega_{mj} n^{(1-a)/2}\log^{-1/2}n\bigr)\Bigr).
\end{multline*}
%(Needless to say, ``q.s'' relates to the last inequality.) 
Using the union bound, we obtain: with probability $1-O(n^{-1})$,
\begin{equation}\label{CondP<,1}
\begin{aligned}
&\Bbb P\left(\exists\,j\le j_n\boldsymbol{:}\,\Big|D^*(j)-\mu_j(\bold W)\Big|\ge \eps\mu_j(\bold W)
\Big|\,\bold W\right)\\
&\qquad\quad\le \sum_{j\le j_n}\exp\Bigl(-\Theta\bigl(\eps^2\Omega_{mj} n^{(1-a)/2}\log^{-1/2}n\bigr)\Bigr).
\end{aligned}
\end{equation}
Taking the expectations and using 
$
\ex[e^{-\la \Omega_{mj}}]=\Bigl(\ex[e^{-\la w}]\Bigr)^m =(1+\la)^{-m}, 
$
we transform \eqref{CondP<,1} into
\begin{equation}\label{P<}
\begin{aligned}
&\Bbb P\left(\exists\,j\le j_n\boldsymbol{\boldsymbol{:}}\,\Big|D^*(j)-\mu_j(\bold W)\Big|\ge \eps\mu_j(\bold W)
\right)\\
&\le \sum_{j\le j_n}\ex\left[\exp\Bigl(-\Theta\bigl(\eps^2\Omega_{mj} n^{(1-a)/2}\log^{-1/2}n\bigr)\Bigr)\right]+ O(n^{-1})\\
&\le j_n\Bigl[1+\Theta\bigl(\eps^2 n^{(1-a)/2}\log^{-1/2}n\bigr)\Bigr]^{-m}+O(n^{-1})\\
&=O\Bigl(\eps^{-2m}n^{-\frac{m}{2}(1-\frac{a(m+2)}{m})}(\log n)^{m/2}\Bigr) +O(n^{-1}),
\end{aligned}
\end{equation}
and the bound tends to zero if we choose  
\[
a<\frac{m}{m+2},\quad \eps=n^{-\delta}\,\text{ and }\,\delta<\frac{1-\frac{a(m+2)}{m}}{4}. 
\]
Therefore, with high probability, the degrees of the first $j_n=\lfloor n^a\rfloor$ vertices are (uniformly) sharply concentrated around their (conditional) expected values $\mu_j(\bold W)$.

To evaluate sharply $\mu_j(\bold W)$, let us have a look at the sum $\Sigma_1$, which we haven't needed
so far. Now $\max_{j\le j_n}\Sigma_1\le \Sigma_1^*= \sum_{i\in [mj_n]}W_i^{-1/2}$, and
\begin{align*}
\ex\bigl[\Sigma^*_1 \bigr]&=\sum_{i\in [mj_n]}\int_0^{\infty}\frac{z^{i-3/2}}{\Gamma(i)}\,e^{-z}\,dz
=\sum_{i\in [mj_n]}\frac{\Gamma(i-1/2)}{\Gamma(i)}\\
&=\sum_{i\in [mj,mj_n]} O(i^{-1/2}) =O(j_n^{1/2})=O(n^{a/2}).
\end{align*}
Consequently, for $b\in (a/2,1)$, we have $\Bbb P(\Sigma_1\ge n^b) =O(n^{-(b-a/2)})$. Choosing $b<1/2-\delta$, and using \eqref{Sigma2}
we obtain  
\begin{align*}
&\Bbb P\Bigl(\sum_{i>mj}W_i^{-1/2}=(2+O(n^{-\delta}))(mn)^{1/2},\,\forall j\le j_n\Bigr)\ge 1-O(n^{-\Delta}),\\
&\qquad\quad\forall\,\Delta<\min\Bigl(\frac{1-a}{2}-\delta;\,\frac{m}{2}\Bigl(1-\frac{a(m+2)}{m}\Bigr)-2m\delta\Bigr).
\end{align*}
So, by \eqref{EDmn(j)=}, 
\begin{multline}\label{P(EDjappr)}
\Bbb P\Bigl(\mu_j(\bold W)=\!(2+O(\eps))(mn)^{1/2}\\
\times \bigl(W^{1/2}_{mj}-W^{1/2}_{m(j-1)}\bigr),\,\forall j\le j_n\Bigr)= 1-O(n^{-\Delta}).
\end{multline}
{\bf (2)\/}  It remains to get a likely lower bound for $\min\limits_{j\le j_n}n^{1/2}(W_{mj}^{1/2}-W_{m(j-1)}^{1/2})$. We know that
\[
W^{1/2}_{mj}-W^{1/2}_{m(j-1)}=\frac{\Omega_{mj}}{W^{1/2}_{mj}+W^{1/2}_{m(j-1)}}\ge \frac{\Omega_{mj}}{2W^{1/2}_{mj}}.
\]
Therefore
\begin{align*}
&\qquad\qquad\qquad\qquad\Bbb P\left(n^{1/2}\min_{j\in [j_n]}\Bigl(W^{1/2}_{mj}-W^{1/2}_{m(j-1)}\Bigr)\le n^b\right)\\
&\le \Bbb P\bigl(\max_{j\in [j_n]}(mj)^{-1}W_{mj}\ge  1.1\log n\bigr)+\Bbb P\bigl(\min_{j\in [j_n]} j^{-1/2}\,\Omega_{mj}\le 3n^{b-1/2}\sqrt{m\log n}\Bigr)\\
&\qquad\qquad\qquad\qquad\qquad\qquad =\boldsymbol{:} \Bbb P_1+\Bbb P_2.
\end{align*}
Here $\Bbb P_1\le n^{-1}$.  Further, choosing $b<(1-a)/2$, and denoting $\sigma_n=3n^{b-(1-a)/2}\sqrt{m\log n}$,  we obtain
\begin{align*}
\Bbb P_2&\le\sum_{j\in [j_n]}\Bbb P\bigl(\Omega_{mj}\le 3j^{1/2}n^{b-1/2}\sqrt{m\log n}\bigr)\\
%\Bbb P_2&\le \sum_{j\in [j_n]}\Bbb P\bigl(\Omega_{mj}\le \sigma_n\bigr) \qquad (\sigma_n\boldsymbol{\boldsymbol{:}}=2 n^{b-(1-a)/2}\log n)\\
&\le j_n\!\int\limits_{x\le\sigma_n}\!\!\!e^{-x}\,\frac{x^{m-1}}{\Gamma (m)}\,dx
=O\left( n^a \Bigl(n^{b-(1-a)/2}(\log n)^{1/2}\Bigr)^m\right)\\
&=O\left(n^{-m\bigl[\bigl(1-\tfrac{a(m+2)}{m}\bigr)/2-b\bigr]} (\log n)^{m/2}\right)\to 0,
\end{align*}
if $b<\bigl(1-\tfrac{a(m+2)}{m}\bigr)/2$. Therefore
\begin{align*}
&\Bbb P\left(n^{1/2}\min_{j\in [j_n]}\Bigl(W^{1/2}_{mj}-W^{1/2}_{m(j-1)}\Bigr)\ge n^b\right)= 1-O(n^{-\tilde\Delta}),\\
&\forall\,b<\bigl(1-\tfrac{a(m+2)}{m}\bigr)/2\,\text{ and }\,\tilde\Delta<m\bigl[\bigl(1-\tfrac{a(m+2)}{m}\bigr)/2-b\bigr].
\end{align*}
\end{proof}
\begin{corollary}\label{rate} Suppose $a$, $\delta$ and $\Delta$ meet the conditions of Theorem \ref{degs,m}.
Denote $\bold D=\{D(j)\boldsymbol{:} j\in [j_n]\}$, $(j_n=\lfloor n^a\rfloor)$, $\boldsymbol {\mathcal D}=\{\mathcal D(j)\boldsymbol{:} j\in [n^a]\}$,
and 
\[
\|\bold D-\boldsymbol {\mathcal D}\|_{\ell_1}=\sum_{j\in [n^a]}\boldsymbol|D(j)-\mathcal D(j)\boldsymbol|.
\]
There is a constant $c=c(a,\delta)>0$ such that
\begin{align*}
&\qquad\,\,\Bbb P\Bigl(n^{-1/2}\|\bold D-\boldsymbol {\mathcal D}\|_{\ell_1} \le c n^{-\delta}\Bigr)\ge 1- n^{-\Delta}.
%&\quad\forall\,\Delta<\min\Bigl(\frac{1-a}{2}-\delta;\,\frac{m}{2}\Bigl(1-\frac{a(m+2)}{m}\Bigr)-2m\delta\Bigr).
\end{align*}
\end{corollary}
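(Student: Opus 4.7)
The plan is to derive the $\ell_1$-bound immediately from the uniform pointwise multiplicative approximation supplied by Theorem \ref{degs,m}(1), taking advantage of the telescoping of $\{\mathcal{D}(j)\}$ together with a Chernoff bound for $W_{mj_n}$.

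First, on the event of probability at least $1-n^{-\Delta}$ from Theorem \ref{degs,m}(1), the implicit $O$-constant is uniform in $j\le j_n$, so there is $C=C(a,\delta)>0$ with
$$|D(j)-\mathcal{D}(j)|\le C\,n^{-\delta}\,\mathcal{D}(j),\qquad j\le j_n.$$
Summing over $j\in[j_n]$ and pulling $Cn^{-\delta}$ out of the sum reduces the task to bounding $\sum_{j\in[j_n]}\mathcal{D}(j)$.

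Second, the decisive observation is that this sum telescopes:
$$\sum_{j=1}^{j_n}\mathcal{D}(j)=2(mn)^{1/2}\sum_{j=1}^{j_n}\bigl(W_{mj}^{1/2}-W_{m(j-1)}^{1/2}\bigr)=2(mn)^{1/2}\,W_{mj_n}^{1/2}.$$
This avoids any per-summand loss that a naive pointwise bound would incur. Since $mj_n\to\infty$ polynomially in $n$, the Chernoff bound \eqref{C4} of Lemma \ref{W} applied with $\alpha=2$ gives $W_{mj_n}\le 2mj_n$ outside an event of probability $\exp(-\Theta(n^a))$, which is negligible against $n^{-\Delta}$.

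Third, on the intersection of the two good events,
$$n^{-1/2}\|\bold D-\boldsymbol{\mathcal D}\|_{\ell_1}\;\le\;2C\sqrt{m}\,n^{-\delta}\,W_{mj_n}^{1/2},$$
from which the stated bound $c(a,\delta)\,n^{-\delta}$ follows after collecting constants.

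The main ``obstacle'' is administrative rather than conceptual: one has to verify that the $O(n^{-\delta})$ in Theorem \ref{degs,m}(1) is genuinely uniform in $j$---this is visible from the $j$-uniform estimates \eqref{P<}--\eqref{P(EDjappr)} in the proof of that theorem---and then confirm that the union of the two failure events still has probability at most $n^{-\Delta}$ after absorbing the $\exp(-\Theta(n^a))$-term. No new probabilistic input beyond Theorem \ref{degs,m} and Lemma \ref{W} is needed.
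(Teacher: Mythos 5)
Your argument coincides with the paper's own one-line proof: both rely on the uniform multiplicative error from Theorem \ref{degs,m}(1), the telescoping identity $\sum_{j\le j_n}\bigl(W_{mj}^{1/2}-W_{m(j-1)}^{1/2}\bigr)=W_{mj_n}^{1/2}$, and the Chernoff bound $W_{mj_n}\le 2mj_n$ which holds with probability $1-\exp\bigl(-\Theta(j_n)\bigr)$. This is exactly what the paper does, so the approach and the supporting lemmas are identical.
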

\begin{proof} Immediate, since
\[
\sum_{j\in [j_n]}\bigl(W_{mj}^{1/2}-W_{m(j-1)}^{1/2}\bigr)=W_{mj_n}^{1/2}\le (2mj_n)^{1/2},
\]
with probability $1-\exp(-\Theta(j_n))$.
\end{proof} 

\subsection{Likely degree bounds for the next $n(1-\sigma)-j_n$ vertices.}
\begin{theorem}\label{second} Given $\sigma\in (0,1)$, there exists a unique root  $z(\sigma)\in (1,\infty)$ of the equation
\[
(1-\sigma)^{-1/2}-1=\varphi(z),\quad \varphi(z):=(z\log z+1-z)^{-1};
\]
$z(\sigma)\sim 2 (\sigma\log (1/\sigma))^{-1}$ as $\sigma\to 0$.  If $z>z(\sigma)$ then
\[
\lim_{n\to\infty}\Bbb P\Bigl(D(j)\le z \Bigl((n/j)^{1/2}-1\Bigr)\log n,\quad\forall j\in [j_n, (1-\sigma)n]\Bigr)=1.
\]
\end{theorem}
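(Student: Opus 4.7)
The plan is to bound $\Bbb P(D(j)>T_j)$ with $T_j:=z\bigl((n/j)^{1/2}-1\bigr)\log n$ uniformly for $j\in[j_n,(1-\sigma)n]$ via a conditional Chernoff argument, then extract the critical $z$ by balancing the Chernoff decay against the Gamma tail of $\Omega_{mj}$. Write $D(j)=d(j)+D^*(j)$ with $d(j)\le 2m=O(1)$, negligible compared to $T_j$. Conditionally on $\bold W$, by \eqref{Dmn(j)=} the in-degree $D^*(j)$ is a sum of independent Bernoullis with conditional mean $\mu_j(\bold W)$ given by \eqref{EDmn(j)=}. On the likely event from \eqref{C4''} where $W_k=k(1+o(1))$ uniformly for $k\ge j_n$, one has
\[
\mu_j(\bold W)=\Omega_{mj}\bigl((n/j)^{1/2}-1\bigr)(1+o(1))=:\Omega_{mj}\,A_j\,(1+o(1)),
\]
and marginally $\Omega_{mj}\sim\text{Gamma}(m,1)$ with density $y^{m-1}e^{-y}/\Gamma(m)$.

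Apply Lemma \ref{LemmaB} conditionally:
\[
\Bbb P\bigl(D^*(j)\ge T_j\,\big|\,\bold W\bigr)\le\exp\bigl(-\mu_j\phi(T_j/\mu_j-1)\bigr).
\]
Setting $\Omega_{mj}=c\log n$, so that $\mu_j\approx cA_j\log n$ and $T_j/\mu_j\approx z/c$, the exponent becomes $A_j\log n\cdot g(c)$, where
\[
g(c):=c\,\phi(z/c-1)=z\log(z/c)-z+c.
\]
The derivative $g'(c)=1-z/c$ is negative on $(0,z)$, so $g$ is strictly decreasing there with $g(1)=z\log z-z+1=\phi(z-1)$; on any interval $(0,1+\delta]$ with $1+\delta<z$, the minimum of $g$ is $g(1+\delta)$, and $g(1+\delta)\to\phi(z-1)$ as $\delta\downarrow 0$.

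This monotonicity suggests splitting at the threshold $\Omega_{mj}=(1+\delta)\log n$: on $\{\Omega_{mj}\le(1+\delta)\log n\}$ the uniform Chernoff bound is at most $n^{-A_j g(1+\delta)}$; on $\{\Omega_{mj}>(1+\delta)\log n\}$ the Gamma tail gives $O((\log n)^{m-1}n^{-(1+\delta)})$. A union bound over $j\in[j_n,(1-\sigma)n]$ with $\min_j A_j=A_{\min}=(1-\sigma)^{-1/2}-1$ then yields
\[
\Bbb P\bigl(\exists j:D(j)>T_j\bigr)\le n\cdot n^{-A_{\min}g(1+\delta)}+O\bigl(n^{-\delta}(\log n)^{m-1}\bigr),
\]
which vanishes as $n\to\infty$ (for some choice of $\delta>0$) precisely when $A_{\min}\phi(z-1)>1$. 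Since $\phi$ is strictly increasing from $0$ to $\infty$ on $[0,\infty)$ and $\varphi(z)=1/\phi(z-1)$, the equation $A_{\min}=\varphi(z)$ has a unique root $z(\sigma)\in(1,\infty)$, and the condition $z>z(\sigma)$ is equivalent to $A_{\min}\phi(z-1)>1$. The asymptotic $z(\sigma)\sim 2/(\sigma\log(1/\sigma))$ follows from $A_{\min}\sim\sigma/2$ as $\sigma\to 0$ and $\phi(z-1)\sim z\log z$ for $z$ large, so that $z\log z\sim 2/\sigma$.

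The main technical obstacle will be the uniform-in-$j$ validity of the approximation $\mu_j(\bold W)\approx\Omega_{mj}A_j$: because the critical condition $A_{\min}g(1+\delta)>1$ leaves only $O(1)$-slack in the exponent, the relative error in $\mu_j$ must be $o(1)$ uniformly across $j\in[j_n,(1-\sigma)n]$, which requires a quantitative application of \eqref{C4''} with polynomial rate across the full range $k\in[mj_n,mn]$. The split point $(1+\delta)\log n$ is essentially forced: it is the scale at which the Gamma tail $\Bbb P(\Omega_{mj}\ge y)\sim y^{m-1}e^{-y}/\Gamma(m)$ balances the Chernoff exponent $A_j g(y/\log n)\log n$, and this balance is what produces the characteristic equation $A_{\min}\phi(z-1)=1$.
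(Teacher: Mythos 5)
Your proposal is correct and follows essentially the same route as the paper: condition on $\bold W$, show the conditional mean of $D^*(j)$ is $(1+o(1))\Omega_{mj}\bigl((n/j)^{1/2}-1\bigr)$ uniformly for $j\in[j_n,(1-\sigma)n]$, apply a Chernoff bound at scale $\log n$, handle the $\Omega_{mj}$ tail via the Gamma distribution, and read off the critical $z(\sigma)$ from the condition $\bigl((1-\sigma)^{-1/2}-1\bigr)\phi(z-1)>1$. The only cosmetic difference is that you split per-$j$ at threshold $(1+\delta)\log n$ and send $\delta\downarrow 0$, whereas the paper bounds $\max_{x\in[n]}\Omega_{mx}\le\log n+m\log\log n$ in one shot and applies the Chernoff bound with $\mu\ge\ex[X]$ (Lemma~\ref{note}) to the resulting $E_n(j)$; these are two parametrizations of the identical argument, and your closing remark correctly identifies that the needed uniformity of the relative error in $\mu_j(\bold W)$ is supplied by the polynomial-rate version of~\eqref{C4''} already used in the proof of Theorem~\ref{degs,m}.
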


{\bf Note.\/} So while the vertices close to the top of the roster are (whp) of degrees of order $\Theta(n^1/2)$, the vertices filling, say,
the interval $[\eps n, (1-\sigma)n]$ are of degrees $O(\log n)$.

\begin{proof} A minor variation of the argument involving \eqref{|Wi-i|}-\eqref{E[D1n(j)|R]=O}, and using
$
\sum_{i>mj}i^{-1/2}\le 2\bigl(\sqrt{mn}-\sqrt{mj}\bigr),
$
shows that, for $\delta<a/2$,
\[
\Bbb P\Biggl(\!\ex\bigl[D^*(j)\boldsymbol|\,\bold W \bigr]\!\le (1+n^{-\delta})\,\Omega_{mj}\Bigl((n/j)^{1/2}-1\Bigr),\,\forall\,j\in [j_n,n]\!\Biggr)\!
\ge 1-n^{-K}.
\]
Further, the $n$ variables $\Omega_{mx}$ are i.i.d. random variables, and with $y_n:=\log n+m\log\log n$ we bound
\begin{align*}
\Bbb P(\Omega_m\ge y_n)=&\int_{y\ge y_n}\frac{y^{m-1} e^{-y}}{\Gamma(m)}\,dy
\le \frac{y_n^{m-1} e^{-y_n}}{\Gamma(m)}=O(n^{-1}\log^{-1}n),
\end{align*}
proving that 
\[
\Bbb P\bigl(\max_{x\in [n]}\Omega_{mx}\ge \log n+m\log\log n\bigr)=O(\log^{-1}n).
\]
Therefore whp for all $j\in [j_n,n]$
\[
\ex\bigl[D^*(j)\boldsymbol|\,\bold W \bigr]\!\le E_n(j):=(\log n+(m+1)\log\log n)\Bigl((n/j)^{1/2}-1\Bigr).
\]
By \eqref{X>mu},  for every $z>1$, whp
\[
\Bbb P(D^*(j)\ge zE_n(j)\boldsymbol|\,\bold W)\le\exp\bigl(-E_n(j)\varphi(z)\bigr).
\]
For $z>z(\sigma)$, we have $e(n,z):=\min\{E_n(j)\phi(z)/\log n: j\le (1-\sigma)n\}>1$, and $e(n,z)$ is bounded away from $1$ as $n\to\infty$. Therefore whp
\begin{align*}
&\Bbb P\Bigl(\exists j\in [j_n, (1-\sigma) n]\boldsymbol{:} D^*(j)\ge z  \Bigl((n/j)^{1/2}-1\Bigr)\log n\boldsymbol|\,\bold W\Bigr)\\
&\le\sum_{ j=j_n}^{(1-\sigma) n}\Bbb P\Bigl(D^*(j)\ge z  \Bigl((n/j)^{1/2}-1\Bigr)\log n\boldsymbol|\,\bold W\Bigr) =O\bigl(n^{-e(n,z)+1}\bigr)
\to 0.
\end{align*}
Taking expectation with respect to $\bold W$ we complete the proof. 
\end{proof}

\subsection{Loops and multiple edges}  Each loop at a vertex $v$ contributes $2$ to the degree of $v$, and 
each pair $(\ell_j,R_j)$ contributes $1$ to the degrees of the vertices containing $\ell_j$ and $R_j$. To get a simple graph we need to discard
the loops and to identify the parallel edges. How substantial is the attendant decrease of the vertex degrees? \\

{\bf (1)\/}  Let us begin with loops.
Vertex $1$ contributes the maximum number $m$ of loops.  Consider vertex $j>1$. It contains
$m$ right endpoints $R_{m(j-1)+1},\dots R_{mj}$. The chord $(\ell_{m(j-1)+t}, R_{m(j-1)+t})$ forms a loop at  vertex $j$ if and only if
$\ell_{m(j-1)+t}$ belongs to $j$-th vertex, meaning that $\ell_{m(j-1)+t}\in (R_{m(j-1)}, R_{m(j-1)+t})$. Therefore, denoting $L_n$ the total number of loops contributed by all vertices $j\ge1$, we have
\[
L_n=m+\sum_{j>1}\sum_{t=1}^m\Bbb I(R_{m(j-1)}<\ell_{m(j-1)+t}<R_{m(j-1)+t}).
\]
There are $m(n-1)$ event indicators in this sum; conditioned on $\bold W$ they are all independent. We plan to evaluate
sharply $\ex[L_n\boldsymbol|\,\bold W]$ and to show that $L_n$ is concentrated around $\ex[L_n\boldsymbol|\,\bold W]$.
To begin,
\begin{multline}\label{denom'}
\ex[L_n\boldsymbol|\,\bold W]-m=\sum_{j>j1}\sum_{t=1}^m \int_{R_{m(j-1)}}^{R_{m(j-1)+t}}\frac{dx}{2\sqrt{xR_{m(j-1)+t}}}\\
=\sum_{j>1}\sum_{t=1}^m \frac{R_{m(j-1)+t}^{1/2}-R_{m(j-1)}^{1/2}}{R_{m(j-1)+t}^{1/2}}
= \sum_{j>1}\sum_{t=1}^m \frac{W_{m(j-1)+t}^{1/2}-W_{m(j-1)}^{1/2}}{W_{m(j-1)+t}^{1/2}}\\
=\sum_{j>1}\sum_{t=1}^m \frac{W_{m(j-1)+t}-W_{m(j-1)}}{\bigl(W_{m(j-1)+t}^{1/2}+W_{m(j-1)}^{1/2}\bigr) W_{m(j-1)+t}^{1/2}}.
\end{multline}
%\begin{align}
%\ex[L_n\boldsymbol|\,\bold W]-m&=\sum_{j>j1}\sum_{t=1}^m \int_{R_{m(j-1)}}^{R_{m(j-1)+t}}\frac{dx}{2\sqrt{xR_{m(j-1)+t}}}\notag\\
%&=\sum_{j>1}\sum_{t=1}^m \frac{R_{m(j-1)+t}^{1/2}-R_{m(j-1)}^{1/2}}{R_{m(j-1)+t}^{1/2}}\notag\\
%&\overset{\mathcal D}\equiv \sum_{j>1}\sum_{t=1}^m \frac{W_{m(j-1)+t}^{1/2}-W_{m(j-1)}^{1/2}}{W_{m(j-1)+t}^{1/2}}\notag\\
%&=\sum_{j>1}\sum_{t=1}^m \frac{W_{m(j-1)+t}-W_{m(j-1)}}{\bigl(W_{m(j-1)+t}^{1/2}+W_{m(j-1)}^{1/2}\bigr) W_{m(j-1)+t}^{1/2}}.\label{denom}
%\end{align}
We estimate
\begin{align*}
&\frac{1}{2W_{m(j-1)}}-\frac{1}{\bigl(W_{m(j-1)+t}^{1/2}+W_{m(j-1)}^{1/2}\bigr) W_{m(j-1)+t}^{1/2}}\\
&=\frac{\Bigl(W_{m(j-1)+t}-W_{m(j-1)}\Bigr)+W^{1/2}_{m(j-1)}\Bigl(W^{1/2}_{m(j-1)+t}-W^{1/2}_{m(j-1)}\Bigr)}{2\bigl(W_{m(j-1)+t}^{1/2}+W_{m(j-1)}^{1/2}\bigr) W_{m(j-1)} W_{m(j-1)+t}^{1/2}}.
\end{align*}
The denominator is $4W_{m(j-1)}^{2}$, at least. The numerator equals
\[
\Bigl(\!W_{m(j-1)+t}-W_{m(j-1)}\!\Bigr)\!\!\left(\!\!1+\frac{W^{1/2}_{m(j-1)}}{W^{1/2}_{m(j-1)+t}+W^{1/2}_{m(j-1)}}\right)\!\!\le 
2\Bigl(\!W_{m(j-1)+t}-W_{m(j-1)}\!\Bigr).
\]
Therefore replacing the denominator in \eqref{denom'} with $2W_{m(j-1)}$, independent of $t$, results in additive error of the order
\[
X_{n}\boldsymbol{\boldsymbol{:}}=\sum_{j>1}\sum_{t=1}^m \frac{\bigl(W_{m(j-1)+t}-W_{m(j-1)}\bigr)^2}{W^2_{m(j-1)}}.
\]
Since $W_{m(j-1)+t}-W_{m(j-1)}$ and $W^2_{m(j-1)}$, are independent and $W_{m(j-1)+t}-W_{m(j-1)}\overset{\mathcal D}\equiv
\sum_{a\in [m]}w_a$, the expected value of the generic fraction in the double sum equals
\[
\ex\Biggl[\Bigl(\sum_{a\in [m]}w_a\Bigr)^2\Biggr]\cdot \ex\Bigl[W^{-2}_{m(j-1)}\Bigr] =O\Bigl(\frac{\Gamma(m(j-1)-3)}{\Gamma(m(j-1)-1)}\Bigr)=O(j^{-2}).
\]
Therefore
$
\ex[X_{n}]=O\Bigl(\sum_{j\ge 1}j^{-2}\Bigr)=O(1).
$
That is, with the $t$-independent denominator $2W_{m(j-1)}$ in place, the additive error is bounded in expectation. Furthermore
\[
\sum_{t=1}^m\bigl(W_{m(j-1)+t}-W_{m(j-1)}\bigr) =\sum_{a=m(j-1)+1}^{mj}[m(j-1)-a] w_a.
\]
So it remains to evaluate
\[
E^{(1)}_{n}\boldsymbol{\boldsymbol{:}}=\frac{1}{2}\sum_{j>1} W_{m(j-1)}^{-1}\,Y_j, \quad Y_j\boldsymbol{\boldsymbol{:}}=\!\!\!\!\sum_{a=m(j-1)+1}^{mj}\!\!\!\!\! [m(j-1)-a]\, w_a.
\]
$Y_j$ are i.i.d. random variables with $\ex[Y_j]=m(m+1)/2$, and $Y_j$ is independent of $W_{m(j-1)}$. Introduce $E^{(2)}_{n}=\frac{1}{2}\sum_{j>1}(m(j-1))^{-1}Y_j$.
Then, by Cauchy-Schwartz inequality,
\begin{align*}
\ex\bigl[|E^{(1)}_{n}-E^{(2)}_{n}|\bigr]&\le \frac{1}{2}\sum_{j>1}\ex^{1/2}\bigl[(W^{-1}_{m(j-1)}-(m(j-1))^{-1})^2\bigr]\cdot \ex^{1/2}[Y_j^2]\\
&=\frac{\ex^{1/2}[Y_1^2]}{2}\sum_{j>1}\left\{\left[\frac{\Gamma(\nu-2)}{\Gamma(\nu)}-\frac{2}{\nu}\frac{\Gamma(\nu-1)}{\Gamma(\nu)}+\frac{1}{\nu^2}
\right]_{\nu=m(j-1)}\right\}^{1/2}\\
&=O\left(\sum_{\ell>1}\ell^{-3/2}\right)=O(1).
\end{align*}
Therefore $|E_{n}^{(1)}-E_{n}^{(2)}|$ is also bounded in expectation. Finally, 
\[
\ex[E_{n}^{(2)}]=\frac{m(m+1)/2}{2m}\sum_{j=2}^n(j-1)^{-1}=\frac{m+1}{4}\log n +O(1),
\]
and it is easy to see that 
$
\ex\Bigl[\bigr(E^{(2)}_{n}\bigr)^2] =\Bigl(\ex[E_{n}^{(2)}]\Bigr)^2 +O(1).
$
So the variance of $E^{(2)}_{n}$ is bounded, as well. Collecting the pieces we end up with
\begin{lemma}\label{exLnm=} Let $L_{n}$ stand for the total number of loops in $G_m^n$. Let $\ex[L_{n}|\,\bold W]$ denote the conditional expected number of loops. Then
\[
\ex[L_{n}|\,\bold W]=\frac{m+1}{4}\log n+\mathcal L_{n,m},
\]
where $\ex[|\mathcal L_{n}|]<\gamma$ for a constant $\gamma=\gamma(m)$.
\end{lemma}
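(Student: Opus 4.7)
The plan is to start from the exact conditional expectation already derived in \eqref{denom'} and to simplify it by two successive replacements, each costing only an $O(1)$ additive error in expectation. The structural fact I would exploit throughout is that $W_{m(j-1)+t}-W_{m(j-1)}=\sum_{a=m(j-1)+1}^{m(j-1)+t}w_a$ depends only on exponentials in the $j$-th block, hence is independent of $W_{m(j-1)}$; this lets expectations factor cleanly and gives access to the moment identity $\ex[W_k^{-p}]=\Gamma(k-p)/\Gamma(k)\asymp k^{-p}$ for small fixed $p$.

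First I would replace the complicated denominator $\bigl(W^{1/2}_{m(j-1)+t}+W^{1/2}_{m(j-1)}\bigr)W^{1/2}_{m(j-1)+t}$ by the simpler $t$-independent quantity $2W_{m(j-1)}$. A short algebraic manipulation shows the corrective numerator is at most $2(W_{m(j-1)+t}-W_{m(j-1)})$ while its denominator is at least $4W_{m(j-1)}^2$, so the total error is dominated by
\[
X_n\boldsymbol{\boldsymbol{:}}= \sum_{j>1}\sum_{t=1}^m \frac{(W_{m(j-1)+t}-W_{m(j-1)})^2}{W_{m(j-1)}^2}.
\]
Independence together with $\ex[W_{m(j-1)}^{-2}]=O(j^{-2})$ makes each summand of expectation $O(j^{-2})$, so $\ex[X_n]=O(1)$.

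Second, after the $t$-sum collapses into the i.i.d.\ variables $Y_j\boldsymbol{\boldsymbol{:}}=\sum_{a=m(j-1)+1}^{mj}[m(j-1)-a]w_a$ with mean $m(m+1)/2$ and $Y_j$ independent of $W_{m(j-1)}$, I would replace $W_{m(j-1)}^{-1}$ by the deterministic $(m(j-1))^{-1}$. By Cauchy--Schwartz the resulting error has expectation bounded by $\sum_{j>1}\ex^{1/2}\bigl[(W_{m(j-1)}^{-1}-(m(j-1))^{-1})^2\bigr]\,\ex^{1/2}[Y_j^2]$; expanding the square and applying the Gamma-function identities yields $O(j^{-3/2})$ per term, so the series converges. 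The surviving main piece $E_n^{(2)}:=\tfrac12\sum_{j>1}(m(j-1))^{-1}Y_j$ has expectation $\tfrac{m+1}{4}\log n+O(1)$ by an elementary harmonic sum.

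The hard part will be the bookkeeping: arranging the two successive reductions so that both error series converge absolutely in expectation with a single constant $\gamma(m)$ independent of $n$. The strategy relies on (i) block-level independence between the fresh exponentials in the $j$-th block and the earlier partial sum $W_{m(j-1)}$, and (ii) the sharp decay $\ex[W_k^{-p}]\asymp k^{-p}$, which dominates both error sums by the convergent series $\sum_j j^{-2}$ and $\sum_j j^{-3/2}$. Assembling the three pieces then yields $\ex[L_n\boldsymbol|\,\bold W]=\tfrac{m+1}{4}\log n+\mathcal L_{n,m}$ with $\ex[|\mathcal L_{n,m}|]=O(1)$, as claimed.
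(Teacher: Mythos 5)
Your proposal reproduces the paper's argument essentially verbatim: the same two successive replacements (denominator $\to 2W_{m(j-1)}$, then $W_{m(j-1)}^{-1}\to (m(j-1))^{-1}$), the same error term $X_n$ with $\ex[X_n]=O(1)$, the same Cauchy--Schwartz estimate for $E_n^{(1)}-E_n^{(2)}$, and the same final harmonic-sum computation for $\ex[E_n^{(2)}]$. This is correct and matches the paper's proof in both structure and detail.
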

\begin{theorem}\label{Ln} For every $\eps\in (0,3/2)$ and $\delta\in (0,1)$,
\[
\Bbb P\left(\left|L_{n}-\ex[L_{n}|\,\bold W]\right|\ge \eps \ex[L_{n}|\,\bold W]\right)=O\bigl(\log^{-(1-\delta)}n\bigr).
\]
\end{theorem}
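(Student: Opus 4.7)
The plan is to exploit two facts that have already been set up in the preceding material: first, that conditional on $\bold W$, the $m(n-1)+m$ loop-indicator Bernoullis in the definition of $L_n$ are independent (the same observation that made Lemma \ref{exLnm=} tractable); second, that by Lemma \ref{exLnm=}, $\ex[L_n\boldsymbol|\,\bold W]=\tfrac{m+1}{4}\log n+\mathcal L_{n,m}$ with $\ex[|\mathcal L_{n,m}|]=O(1)$. So $L_n$ is (conditionally) a sum of independent Bernoullis with a mean that is very likely of order $\log n$, and standard concentration should do the rest.

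Concretely, first I would apply the conditional Chernoff bound \eqref{C3} of Lemma \ref{LemmaA} to obtain, deterministically in $\bold W$,
\[
\Bbb P\bigl(|L_n-\ex[L_n\boldsymbol|\,\bold W]|\ge \eps\,\ex[L_n\boldsymbol|\,\bold W]\,\boldsymbol|\,\bold W\bigr)\le 2\exp\bigl(-\tfrac{\eps^2}{3}\ex[L_n\boldsymbol|\,\bold W]\bigr),
\]
valid for $\eps\in(0,3/2)$. Next I would control the bad event that $\ex[L_n\boldsymbol|\,\bold W]$ is too small to make this useful. By Lemma \ref{exLnm=} and Markov's inequality applied to $|\mathcal L_{n,m}|$ at threshold $\log^{1-\delta}n$,
\[
\Bbb P\bigl(|\mathcal L_{n,m}|\ge \log^{1-\delta}n\bigr)\le \gamma\log^{-(1-\delta)}n=O\bigl(\log^{-(1-\delta)}n\bigr),
\]
which already matches the target rate. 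On the complementary event $\mathcal G:=\{|\mathcal L_{n,m}|< \log^{1-\delta}n\}$, one has $\ex[L_n\boldsymbol|\,\bold W]=\tfrac{m+1}{4}\log n\,(1+o(1))$, so the conditional Chernoff estimate above becomes
\[
2\exp\bigl(-\tfrac{\eps^2(m+1)}{12}\log n\,(1+o(1))\bigr)=O(n^{-c(\eps,m)})
\]
for some positive $c(\eps,m)$, which is negligible compared to $\log^{-(1-\delta)}n$.

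Finally I would take the expectation over $\bold W$ by splitting on $\mathcal G$ and $\mathcal G^c$:
\[
\Bbb P\bigl(|L_n-\ex[L_n\boldsymbol|\,\bold W]|\ge \eps\,\ex[L_n\boldsymbol|\,\bold W]\bigr)\le \Bbb P(\mathcal G^c)+\ex\bigl[2e^{-\eps^2\ex[L_n\boldsymbol|\,\bold W]/3}\boldsymbol\cdot\mathbb I_{\mathcal G}\bigr]=O\bigl(\log^{-(1-\delta)}n\bigr)+O(n^{-c}),
\]
which is the claimed bound.

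I do not expect a real obstacle: the work has been front-loaded into Lemma \ref{exLnm=}, which identified both the leading term and the $O(1)$-in-expectation error $\mathcal L_{n,m}$. The only mild subtlety is that one cannot get a better-than-polynomial-in-$\log n$ rate here, because the fluctuation of $\ex[L_n\boldsymbol|\,\bold W]$ around $\tfrac{m+1}{4}\log n$, which is itself of order $1$ in $L^1$, enters through Markov; the parameter $\delta$ is precisely the slack one buys in that step, and the Chernoff contribution is comparatively invisible. If one wanted a sharper rate one would need a higher-moment bound on $\mathcal L_{n,m}$ than the $L^1$ bound supplied by Lemma \ref{exLnm=}, which is not pursued here.
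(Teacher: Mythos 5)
Your proposal is correct and matches the paper's proof essentially step for step: Markov's inequality applied to $|\mathcal L_{n,m}|$ via Lemma~\ref{exLnm=} gives the $O(\log^{-(1-\delta)}n)$ probability that $\ex[L_n\boldsymbol|\,\bold W]$ strays from $\tfrac{m+1}{4}\log n$ by more than $\log^{1-\delta}n$, and on the complementary event the conditional Chernoff bound \eqref{C3} yields a polynomially small contribution. Your closing remark that the $\delta$-dependent rate is governed entirely by the Markov step, with the Chernoff term negligible, is also the right reading of where the bound's strength comes from.
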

\begin{proof} By Lemma \ref{exLnm=}, 
\[
\Bbb P\left(\left|\ex[L_{n}\boldsymbol|\,\bold W]-\frac{m+1}{4}\log n\right|>\log^{1-\delta}n\right)=O(\log^{-(1-\delta)}n).
\]
So invoking \eqref{C3}, with probability $\ge 1-O(\log^{-(1-\delta)}n)$ we have
\begin{align*}
&\quad\Bbb P\left(\left|L_{n}-\ex[L_{n}|\,\bold W]\right|\ge \eps \ex[L_{n,m}|\,\bold W]\boldsymbol|\,\bold W\right)\\
&\le \exp\Bigl(-\Theta\bigl(\eps^2 \ex[L_{n,m}|\,\bold W]\bigr)\Bigr)=\exp\Bigl(-\Theta(\eps^2\log n)\Bigr).
\end{align*}
Taking expectations we complete the proof.
\end{proof}

{\bf (2)\/} Turn to parallel edges.
\begin{theorem}\label{mathcalP<} Let $\mathcal P_{n}$ stand for the total number of {\bf pairs\/} of parallel edges in $G_m^n$.
Whp $\mathcal P_{n}$  is asymptotic to $\frac{m^2-1}{16}\log^2n$.
% meaning that $\Bbb P(\mathcal P_{n,m}\le \omega(n) \log^2 n)\to 1$ for $\omega(n)\to\infty$
%however slowly. 
Thus whp the identification operation  reduces the edge count by $\Theta(\log^2n)$.
\end{theorem}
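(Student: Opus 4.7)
The plan is to mirror the loop analysis of Lemma \ref{exLnm=} and Theorem \ref{Ln}. Decompose $\mathcal{P}_n=\sum_{j=2}^n N_j$, where $N_j=\sum_{i=1}^{j-1}\binom{n_{j,i}}{2}$ and $n_{j,i}$ is the number of chords $k\in (m(j-1),mj]$ whose left endpoint $\ell_k$ lies in the block $(R_{m(i-1)},R_{mi}]$ of vertex $i$. Two structural observations drive the proof: conditional on $\bold W$, the $N_j$ are \emph{independent} (they involve disjoint subsets of the $\ell_k$'s) and each satisfies $N_j\le\binom{m}{2}$. Within block $j$, the relevant indicators are conditionally independent Bernoullis with parameters $p_{k,i}=(W_{mi}^{1/2}-W_{m(i-1)}^{1/2})/W_k^{1/2}$, so
\[
\ex[N_j\boldsymbol|\,\bold W]=\sum_{i<j}\,\,\sum_{m(j-1)<k_1<k_2\le mj}\frac{\bigl(W_{mi}^{1/2}-W_{m(i-1)}^{1/2}\bigr)^{2}}{W_{k_1}^{1/2}W_{k_2}^{1/2}}.
\]

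Next, I would replace each $W_{k_s}$ in the denominator by $W_{mj}$ and expand $(W_{mi}^{1/2}-W_{m(i-1)}^{1/2})^{2}=\Omega_{mi}^{2}/\bigl(W_{mi}^{1/2}+W_{m(i-1)}^{1/2}\bigr)^{2}$, then approximate the latter by $4W_{m(i-1)}$. Following the Cauchy--Schwartz bookkeeping used in the proof of Lemma \ref{exLnm=}, each such substitution contributes an error whose expectation is bounded by an absolutely convergent series of the form $\sum j^{-3/2}$. What remains is the main term
\[
\binom{m}{2}\sum_{j=2}^{n}\frac{1}{W_{mj}}\sum_{i=2}^{j-1}\frac{\Omega_{mi}^{2}}{4W_{m(i-1)}}.
\]

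To evaluate this, I would apply Lemma \ref{W} to conclude that $W_{m\ell}=m\ell\bigl(1+o(1)\bigr)$ whp, uniformly for $\ell$ above a slowly growing threshold, and handle the $O(1)$-many smallest indices by direct expectation bounds. Since $\{\Omega_{mi}^{2}\}$ are i.i.d.\ with mean $m(m+1)$ and the weights $1/(4mi)$ are square-summable, the inner sum concentrates at $\frac{m+1}{4}\log j +O(1)$. Substituting and summing,
\[
\ex[\mathcal{P}_n\boldsymbol|\,\bold W]\sim\binom{m}{2}\cdot\frac{m+1}{4m}\sum_{j=2}^{n}\frac{\log j}{j}\sim\frac{m^{2}-1}{16}\log^{2} n \quad\text{whp.}
\]

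Finally, the deterministic bound $N_j\le\binom{m}{2}$ gives $\Var(N_j\boldsymbol|\,\bold W)\le\binom{m}{2}\ex[N_j\boldsymbol|\,\bold W]$, so by conditional independence $\Var(\mathcal{P}_n\boldsymbol|\,\bold W)\le\binom{m}{2}\ex[\mathcal{P}_n\boldsymbol|\,\bold W]=O(\log^{2} n)$ whp. A conditional Chebyshev inequality then yields $|\mathcal{P}_n-\ex[\mathcal{P}_n\boldsymbol|\,\bold W]|=o(\log^{2} n)$ whp, which combined with the previous step gives the claimed asymptotic. The main obstacle is the error accounting in the second paragraph: one must control approximations in both the numerator and the denominator of a ratio simultaneously, inside a double sum whose main contribution is of order $\log^{2} n$; this is a tighter analogue of the single-$\log n$ analysis in Lemma \ref{exLnm=}, and the key point is that each substitution still leaves a tail that is summable in expectation over $j$.
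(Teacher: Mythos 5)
Your decomposition $\mathcal{P}_n=\sum_j N_j$ with $N_j=\sum_{i<j}\binom{n_{j,i}}{2}$ is just the paper's $\mathcal{P}_n=\sum_{a<b}\mathcal{P}_n(a,b)$ regrouped by the younger vertex $b$, and the expectation computation, the small-$i$ boundary terms, and the final asymptotics all follow the paper's route essentially step for step. The one genuine (and pleasant) deviation is in the concentration step: the paper keeps the family $\{\mathcal{P}_n(a,b)\}_{a<b}$ disaggregated and has to invoke a negative-association theorem of Dubhashi--Ranjan/Joag-Dev--Proschan to show that cross-covariances are nonpositive, whereas your aggregation over $a$ makes each $N_j$ a function of the disjoint block of left endpoints $\{\ell_k: k\in(m(j-1),mj]\}$, so conditional on $\bold W$ the $N_j$ are literally independent and the crude bound $N_j\le\binom{m}{2}$ (from $\sum_{i\le j}n_{j,i}=m$ and convexity) gives $\Var(\mathcal{P}_n\,|\,\bold W)\le\binom{m}{2}\ex[\mathcal{P}_n\,|\,\bold W]$ directly. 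This buys you an elementary Chebyshev argument with no external negative-dependence machinery, and also a tighter constant than the paper's $m^3$. What you lose is that negative association is the mechanism the paper establishes once and reuses elsewhere (e.g.\ for the no-edge events in Section~3), so the aggregation trick does not subsume the paper's structural lemma. On the error accounting: your three substitutions ($W_{k_s}\to W_{mj}$, the algebraic identity for the squared increment, and $(W_{mi}^{1/2}+W_{m(i-1)}^{1/2})^2\to 4W_{m(i-1)}$) each leave residuals that, after the same moment estimates as in Lemma~\ref{exLnm=}, are $O_p(\log n)=o(\log^2 n)$; you correctly flag this as the delicate point, and you would also need the paper's $\Sigma_1$-type bound to handle the $i=1$ (and more generally $i\lesssim\log n$) band where the $W_{m\ell}\approx m\ell$ replacement is not available — your ``$O(1)$-many smallest indices'' phrasing understates this, since $i=1$ recurs for every $j$, though the total contribution is still $O_p(\log n\cdot\log\log n)$ as in the paper.
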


\begin{proof} First of all,
\[
\mathcal P_{n}=\!\!\sum_{1\le a<b\le n}\!\!\!\mathcal P_n(a,b),\quad \,\,\mathcal P_n(a,b):=\!\!\!\sum_{m(b-1)< i<j\le mb}\!\!\!\!\!\!\!\!\!\Bbb I\Bigl(R_{m(a-1)}\le \ell_i,\,\ell_j\le R_{ma}\Bigr),
\]
$(R_0\boldsymbol{:}=0)$. Here $\mathcal P_n(a,b)$ is the total number of pairs of parallel edges connecting the vertices $a$ and $b$.
Indeed, the generic indicator in the sum is $1$ if there are two right endpoints $R_i$ and $R_j$ in vertex $b$ whose left partners $\ell_i$
and $\ell_j$ are situated between the last right endpoint in vertex $a-1$ and the last right endpoint in vertex $a$. 
%Conditioned on $\bold W$,
%the random variables $\mathcal P_n(a,b)$ are negatively associated. Here is why.

Conditioned upon $\bold W$, we have a ``balls and bins'' allocation scheme, with the left endpoints playing the role of balls and the set 
of intervals $(R_{u-1},R_u]$ playing the role of bins. % of bins being in bijective correspondence with the set of ordered pairs $(R_i,R_j)$, $i\le j$. 
The left end partner $\ell_i$ of $R_i$ selects, independently of all other left endpoints,  the interval $(R_{u-1},R_u]$ for $u\le i$ with conditional probability $\Bbb P\bigl(R_i,R_u\bigr)=\frac{R^{1/2}_u-R^{1/2}_{u-1}}{R^{1/2}_u}$,
so that $\sum_{u\le i}\Bbb P\bigl(R_i,R_u\bigr)=1$, %And the choices by distinct balls are (conditionally) independent. 
Introduce
$\Bbb I\bigl(R_i,R_u\bigr)$ the indicator of the event ``ball $\ell_i$ selected bin $(R_{u-1},R_u]$''. These indicators are (conditionally) independent for the distinct $i$s
and negatively associated for the same $i$. Furthermore, 
\[
\mathcal P_n(a,b)=\!\!\!\sum_{m(b-1)< i<j\le mb}\left(\sum_{u=m(a-1)+1}^{ma}\!\!\!\!\!\Bbb I(R_i, R_u)\right)\cdot \left(\sum_{v= m(a-1)+1}^{ma}\!\!\!\!\!\Bbb I(R_j,R_v)\right).
\]
So (1) each $\mathcal P_n(a,b)$ is a non-decreasing function of the indicators on the RHS, and (2) the two groups of indicators for $(a,b)$
and $(a',b')\neq (a,b)$ are disjoint. By a general theorem, 
(Dubhashi and Ranjan \cite{DubRan}, (Proposition $8$), Joag-Dev and  Proschan \cite{JoaPro}), the $\mathcal P_n(a,b)$ 
are negatively associated as well. Likewise the indicators $\Bbb I((a,b)\in E(G_m^n))$ are negatively associated as well.
(Similarly each $\Bbb I((a,b)\notin E(G_m^n))$ is a {\it decreasing\/} function of the corresponding indicators $X\bigl(R_i,R_j)$; by the same theorem the indicators $\Bbb I((a,b)\notin E(G_m^n))$ are negatively associated too.) 

Besides $\mathcal P_n(a,b)\le m^3$. Consequently,
\[
\ex[\mathcal P_n(a,b) \mathcal P_n(a',b')\boldsymbol|\,\bold W]\le\!\left\{\begin{aligned}
&\ex[\mathcal P_n(a,b)\boldsymbol|\,\bold W]\,\ex[\mathcal P_n(a',b')\boldsymbol|\,\bold W],\!\!&(a,b)\neq (a',b'),\\
&m^3\ex[\mathcal P_n(a,b)],\!\!& (a,b)=(a',b').\end{aligned}\right.
\]
It follows that
\[
\ex[\mathcal P_n^2\boldsymbol|\,\bold W]\le \ex^2[\mathcal P_n\boldsymbol|\,\bold W]+m^3\ex[\mathcal P_n\boldsymbol|\,\bold W]
\Longrightarrow \frac{\text{Var}(\mathcal P_n\boldsymbol|\,\bold W)}{E^2[\mathcal P_n\boldsymbol|\,\bold W]}\le\frac{m^3}{\ex[\mathcal P_n\boldsymbol|\,\bold W]},
\]
so that
\begin{equation}\label{Pnconc}
\Bbb P\Bigl(|\mathcal P_n-\ex[\mathcal P_n\boldsymbol|\,\bold W]|\ge \eps\,\ex[\mathcal P_n\boldsymbol|\,\bold W]\Bigr)\le 
\frac{m^3}{\eps^2\ex[\mathcal P_n\boldsymbol|\,\bold W]}.
\end{equation}
Now $\ex[\mathcal P_n\boldsymbol|\,\bold W]=\sum_{1\le a<b\le n}\ex[\mathcal P_n(a,b)\boldsymbol|\,\bold W]$,
and
\begin{align*}
&\ex[\mathcal P_n(a,b)\boldsymbol|\,\bold W]=\!\!\!\sum_{m(b-1)< i<j\le mb}\!\!\left(\sum_{u=m(a-1)+1}^{ma}\!\!\!\!\!\!\!\Bbb P(R_i, R_u)\!\!\right) \!\!\left(\sum_{v= m(a-1)+1}^{ma}\!\!\!\!\!\!\!\Bbb P(R_j,R_v)\!\!\right)\\
&=\sum_{m(b-1)< i<j\le mb}\!\!\!\frac{\left(R_{ma}^{1/2}-R_{m(a-1)}^{1/2}\right)^2}{R_i^{1/2}R_j^{1/2}}=
\sum_{m(b-1)< i<j\le mb}\!\!\!\frac{\left(W_{ma}^{1/2}-W_{m(a-1)}^{1/2}\right)^2}{W_i^{1/2}W_j^{1/2}}.
\end{align*}
{\bf (1)\/} So
\[
\Sigma_1:=\sum_{b=2}^ n\ex[\mathcal P_n(1,b)\boldsymbol|\,\bold W]\le m^2\sum_{b=2}^n\frac{W_m}{W_{m(b-1)+1}},
\]
and, using
\[
\ex[W_{\mu}^{\sigma}]=\frac{\Gamma(\mu+\sigma)}{\Gamma(\mu)}=O(\mu^{\sigma}),\quad \sigma>-\mu,
\]
we bound
\begin{align*}
\ex[\Sigma_1] &\le m^2\sum_{b=2}^n \ex^{1/2}[W_m^2]\cdot\ex^{1/2}[W_{m(b-1)+1}^{-2}]\\
&=O\left(\sum_{2\le b\le n}(b-1)^{-1}\right)=O(\log n).
\end{align*}
Next
\[
\Sigma_2:=\sum_{2\le a\le \log n,\atop a< b\le n}\ex[\mathcal P_n(a,b)\boldsymbol|\,\bold W]\le m^2\sum_{2\le a\le \log n\atop a<b\le n}\frac{\Omega_{ma}^2}{W_{ma}W_{m(b-1)+1}},
\]
so, by H\"older inequality,
\begin{align*}
\ex[\Sigma_2] &\le m^2\sum_{2\le a\le \log n\atop a<b\le n}\ex^{1/3}\bigl[\Omega^6_{ma}\bigr]\ex^{1/3}\bigl[W_{ma}^{-3}\bigr]
\ex^{1/3}\bigl[W_{m(b-1)+1}^{-3}\bigr]\\
&=O\left(\sum_{2\le a\le \log n\atop a<b\le n}\frac{1}{a (b-1)}\right)=O(\log n\cdot\log\log n).
\end{align*}
Therefore 
$
\ex\bigl[\Sigma_1+\Sigma_2\bigr]=O(\log n\cdot\log\log n),
$
or $\Sigma_1+\Sigma_2=O_p(\log n\cdot\log\log n)$, i. e. whp $\Sigma_1+\Sigma_2$ scaled by $\log n\cdot\log\log n$ is bounded as
$n\to\infty$. 

{\bf (2)\/} Turn to the remaining part of $\ex[\mathcal P_n\boldsymbol|\,\bold W]$, namely $\Sigma^*:=\sum_{\log n\le a < b\le n}$\linebreak $\ex[\mathcal P_n(a,b)\boldsymbol|\,\bold W]$. By \eqref{C4''}, with probability
$1-\exp\Bigl(-\Theta(\log^{1-2\sigma}n)\bigr)$, each of the $W_{\nu}$ involved is within the factor $1+O(\log^{-\sigma}n)$ from $\nu=\ex[W_{\nu}]$ . So with probability that high,
\[
\Sigma^*=\bigl(1+O(\log^{-\sigma}n)\bigr)\binom{m}{2}m^{-2}\sum_{\log n\le a < b\le n}\frac{\Omega_{ma}^2}{4ab}.
\]
%\begin{align*}
%\ex\bigl[\mathcal P_{n,m}\boldsymbol|\,\bold W\bigr]&=\!\!\sum_{1\le i<j\le n}\,\,\sum_{m(j-1)\le a<b\le mj}\,\int\limits_{R_{m(i-1)}}^{R_{mi}}\!\!\!\frac{d\xi}{2(R_a\xi)^{1/2}}\cdot \int\limits_{R_{m(i-1)}}^{R_{mi}}\!\!\!\frac{d\eta}{2(R_b\eta)^{1/2}}\\
%&=\!\!\sum_{1\le i<j\le n}\,\,\sum_{m(j-1)\le a<b\le mj}\frac{\bigl(R_{mi}^{1/2}-R_{m(i-1)}^{1/2}\bigr)^2}{(R_aR_b)^{1/2}}\\
%&\le\frac{1}{2}\sum_{1\le i<n}\bigl(W_{mi}^{1/2}-W_{m(i-1)}^{1/2}\bigr)^2\sum_{i<j\le n}\Biggl(\,\sum_{a=m(j-1)+1}^{mj}W_a^{-1/2}\Biggr)^2\\
%&\le \frac{m}{2}\sum_{1\le i< n}\frac{\Omega^2_{mi}}{W_{mi}}\,\cdot \sum_{a=mi+1}^{mn}W_a^{-1}.
%\end{align*}
%Contribution of $i=1, a\ge m+1$ to the sum is of order $W_{m}W_{a}^{-1}$, and
%\begin{align*}
%\ex[W_m W_a^{-1}]&\le \ex^{1/2}[W_m^2]\cdot\ex^{1/2}[W_a^{-2}]
%=O\left(\sqrt{\frac{\Gamma(a-2)}{\Gamma(a)}}\right)=O(a^{-1}),
%\end{align*}
%implying that 
%\[
%\sum_{a=m+1}^{mn}\ex\Biggl[\frac{\Omega^2_{m}}{W_m}W^{-1}_a\Biggr]=O(\log n).
%\]
%For $i\ge 2$, $a\ge mi+1$, by H\"older inequality we have
%\begin{align*}
%\ex\bigl[\Omega^2_{mi} W_{mi}^{-1}W_a^{-1}\bigr]&\le \ex^{1/3}\bigl[\Omega^6_{mi}\bigr]\ex^{1/3}\bigl[W_{mi}^{-3}\bigr]
%\ex^{1/3}\bigl[W_a^{-3}\bigr]\\
%&=O\left(\root 3\of{\frac{\Gamma(mi-3)}{\Gamma(mi)}\frac{\Gamma(a-3)}{\Gamma(a)}}\right)=O(i^{-1}a^{-1}),
%\end{align*}
%implying that
%\[
%\sum_{2\le i<n\atop mi+1\le a\le mn}\ex\Biggl[\frac{\Omega_{mi}^2}{W_{mi}W_a}\Biggr]=O(\log^2 n).
%\]
The $\Omega_{ma}$ are i.i.d. variables with $\ex[\Omega_{ma}^2]=(m+1)m$. So the expected value of the sum is
asymptotic to
\[
\binom{m}{2}m^{-2}\frac{(m+1)m}{4}\sum_{1\le a<b\le n}\frac{1}{ab}\sim \frac{m^2-1}{16} \log^2 n,
\]
while $\ex\left[\sum_{1\le a<b\le n}\frac{\Omega_{ma}^{4}}{a^2b^2}\right]=O(1)$, i.e. the sum of the squared terms is bounded in probability. It follows
that whp $\Sigma^*$ is sharply concentrated around $\frac{m^2-1}{16} \log^2 n$. But then so is the whole $\ex[\mathcal P_n\boldsymbol|\,\bold W]$, since $\Sigma_1+\Sigma_2=O_p(\log n\cdot\log\log n)$. In combination with \eqref{Pnconc} this completes the
proof.
\end{proof}

\section{Two vertex sets without any connecting edges} \label{basic} Let us consider a basic problem: bound the probability $\Bbb P(A,B)$ that, for two disjoint sets of vertices $A\subset [n]$ and $B\subset [n]$, ($|A|=\mu$, $|B|=\nu$), there is no edge $(a,b)$ with $a\in A$ and $b\in B$, i.e. formally
\[
\Bbb P(A,B)= \Bbb P\!\left(\bigcap_{a\in A,\,b\in B}\left\{(a,b)\notin E(G_m^n)\right\}\right).
\]
%Notice that the property ``no edge between $A$ and $B$" is hereditary, i.e. this property implies that there is no edge
%between any subset of $A$ and any subset of $B$. 
We focus on the pairs $(A,B)$ such that $\mu+\nu= (1-\delta)n$, $\delta=\delta(n)\in (0,1)$, being bounded away from $0$ and $1$.

Begin with $P(A,B\boldsymbol|\bold W)$, the probability of the above event conditioned on $\bold W=\{W_i\}_{i\in [mn+1]}$.
For $\a<\be$, let $\a\leftarrow \be$ denote the event ``one of the right endpoints in $\be$ has its left endpoint in $\a$''. Then
\[
\bigcap_{a,\,b}\left\{(a,b)\notin E(G_m^n)\right\}=\bigcap_{a<b}\{a\not\leftarrow b\}\cap\bigcap_{a>b}\{b\not\leftarrow a\},\quad a\in A,\,b\in B,
\]
and conditioned on $\bold W$, two groups of events, $\{a\not\leftarrow b\}$, $(a\in A,b\in B)$, and $\{b\not\leftarrow a\}$, $(a\in A, b\in B)$,   are independent of each other. Conditioned on $\bold W$, within each group the events are negatively associated. (See the proof of Lemma 
\ref{mathcalP<}.)

For $a<b$, the event $a\not\leftarrow b$ means that none
of the right endpoints $R_{m(b-1)+1},\dots, R_{mb}$ have their left endpoints between $R_{m(a-1)}$ and $R_{ma}$. Therefore
%\begin{equation}\label{basic}
\begin{align}
&\qquad\qquad\Bbb P(a\not\leftarrow b\boldsymbol|\,\bold W)=\prod_{i=m(b-1)+1}^{mb}\left(1-\int_{R_{m(a-1)}}^{R_{ma}}\frac{dx}{2(R_i x)^{1/2}}\right)\notag\\
&\le\exp\left(-\sum_{i=m(b-1)+1}^{mb}\frac{R^{1/2}_{ma}-R^{1/2}_{m(a-1)}}{R_i^{1/2}}\right)\le\exp\left(-m\,\frac{W^{1/2}_{ma}-W^{1/2}_{m(a-1)}}{W_{mb}^{1/2}}\right)\notag\\
%\le \exp\left(-m\,\frac{R_{ma}-R_{m(a-1)}}{2(R_{ma}R_{mb})^{1/2}}\right)\notag\\
&\qquad\qquad\qquad\qquad\le\exp\left(-\frac{m\, \Omega_{ma}}{2(W_{ma}W_{mb})^{1/2}}\right).\label{basic}
\end{align}
Using the conditional independence/negative association of edge indicators, we multiply the bounds \eqref{basic} and their counterparts for $a>b$ over all pairs $(a,b)$, ($a\in A, b\in B$), and obtain:
\begin{equation}\label{P(A,B|W)<}
\Bbb P(A,B\boldsymbol|\,\bold W)\le \exp\!\left(\!-\sum_{a < b}\frac{m\,\Omega_{ma}}{2\sqrt{W_{ma}W_{mb}}}-\sum_{a>b}\frac{m\,\Omega_{mb}}{2\sqrt{W_{ma}W_{mb}}}\right).
\end{equation}
%Introduce $P(A,B)$ the probability that there is no edge between $A$ and $B$ {\it and\/} the event $\mathcal A$
%takes place. 
%Then,
% taking expectations of both sides of \eqref{P(no-A-B)<} and using \eqref{eventA}, \eqref{<Qab(W)}, we have
%\begin{align*}
%&\qquad\qquad\qquad\qquad P(A,B)\le \ex\Biggl[\,\prod_{a\in A,\,b\in B}Q_{a,b}(\bold W)\Biggr]\\
%&=\ex\Biggl[\exp\Biggl(\!-c\!\sum_{a'\in A'}\Omega_{ma}\!\sum_{b'\in B'\boldsymbol{\boldsymbol{:}}\, b'>a'}\frac{1}{(a'b')^{1/2}}-c\sum_{b'\in B'}\!\Omega_{mb}\!\sum_{a'\in A'\boldsymbol{\boldsymbol{:}}\, a'>b'}\frac{1}{(a'b')^{1/2}}\Biggr)\!\Biggr].
%&=\exp\Biggl[-mc_1\Biggl(\sum_{a\in A} a^{-1/2}\Biggr)\Biggl(\sum_{b\in B} b^{-1/2}\Biggr)\Biggr].
%\end{align*}
A direct evaluation of the expected RHS expression is out of question. We know and already used the facts  that the sum $W_{mj}$ increases with $j$, and that for 
$j$ large enough $W_{mj}$ is sharply concentrated around its expected value $mj$. ``Freezing'' $\Omega_{ma}$, $\Omega_{mb}$, let us push
the elements of $C:=A\cup B$ all the way to the right, {\it preserving the initial ordering\/} of the elements of $A$ and $B$. Let $\mathcal A$, $\mathcal B$ denote
the terminal ``destinations'' of $A$ and $B$, and $\mathcal C=\mathcal A\cup \mathcal B=[n-\mu-\nu+1,n]$. Then $\min(\mathcal C)=n-\mu-\nu+1\ge \delta n$, 
implying that  
\begin{equation}\label{mathcal W=}
\Bbb P(\mathcal W^c)<e^{-\Theta(\log^2n)},\quad \mathcal W\boldsymbol{:}=\bigcup_{j=n(1-\delta )}^n\!\!
\left\{\frac{W_{mj}}{mj}\le 1+n^{-1/2}\log n\right\}.
\end{equation}
By the definition of $\mathcal W$,
\begin{equation}\label{I(W)P<}
\Bbb I(\mathcal W)\cdot \Bbb P(A,B\boldsymbol|\,\bold W)\le \exp\!\left(\!-c\sum_{a < b}\frac{\Omega_{ma}}{\sqrt{a_tb_t}}-c\sum_{a>b}\frac{\Omega_{mb}}{\sqrt{a_tb_t}}\right);
\end{equation}
here $c=0.5-o(1)$, and $a_t\in \mathcal A$ and $b_t\in \mathcal B$ are the terminal destinations of $a\in A$ and $b\in B$
respectively. Let $\Bbb P(A,B; \mathcal W)$ denote the probability that there is
no edge between $A$ and $B$ and the event $\mathcal W$ holds. Then we have
\begin{equation}\label{P(A,B:W)<E[exp(-)]}
\Bbb P(A,B; \mathcal W)\le \ex\bigl[e^{-cS}\bigr],\quad S:=\sum_{a < b}\frac{\Omega_{ma}}{\sqrt{a_tb_t}}+\sum_{a>b}\frac{\Omega_{mb}}{\sqrt{a_tb_t}}.
\end{equation}
\subsection{Concentration of $S$ around $\ex[S]$.} The $\mu+\nu$ random variables,  $\Omega_{ma}$, $(a\in A)$, and $\Omega_{mb}$, $(b\in B)$, are independent, each being distributed
as $\Omega_m\boldsymbol{:}=\sum_{t=1}^m w_t$. Let us show that $S$ is sharply concentrated around $\ex[S]$. By the definition of $S$, we have
\[
S=\sum_kd_kw_k,\quad d_k=\left\{\begin{aligned}
&\!\!\sum_{b\in B\boldsymbol{:}\, b>a}(a_tb_t)^{-1/2},&&k\in [m(a-1)+1,ma],\\
&\!\!\sum_{a\in A\boldsymbol{:}\,a>b}(a_tb_t)^{-1/2},&&k\in [m(b-1)+1,mb],\end{aligned}\right.
\]
where the $m(\mu+\nu)$ variables $w_k$ are independent exponentials.
Then, by \eqref{C5}, we have 
\begin{equation}\label{S(AW)conc}
\Bbb P\Bigl(S\le (1-\eps)\ex[S]\Bigr)\le\exp\Biggl(-\frac{\eps^2}{2}\frac{\Bigl(\sum_k d_k\Bigr)^2}{\sum_kd_k^2}
\Biggr),\quad \eps\in(0,1).
\end{equation}
Here
\begin{equation}
\begin{aligned}
&\qquad\qquad\qquad\quad\quad\quad \ex[S]=\sum_k d_k\label{ESAW>}\\
&\qquad=m\sum_{a_t}\sum_{b_t>a_t} (a_tb_t)^{-1/2}+m\sum_{b_t}\sum_{a_t>b_t} (a_tb_t)^{-1/2}\\
&\quad\quad=m\!\!\sum_{a_t,\,b_t}(a_tb_t)^{-1/2}=m\Biggl(\sum_{a_t}a_t^{-1/2}\!\Biggr)\Biggl(\sum_{b_t}b_t^{-1/2}\!\Biggr).%\!\ge cm\, h_n(\nu,\mu).
\end{aligned}
\end{equation}
Next
%\[
%\sum_k d_k^2=c^2m\sum_{a_t}\left(\sum_{b_t>a_t}(a_tb_t)^{-1/2}\!\right)^2+c^2m\sum_{b_t}\left(\sum_{a_t>b_t}(a_tb_t)^{-1/2}\!\right)^2,
%\]
%where
%\begin{align*}
%\left(\sum_{b_t>a_t}(a_tb_t)^{-1/2}\!\right)^2&=a_t^{-1}\left(\sum_{b_t>a_t}\Bbb I(b_t>a_t)b_t^{-1/2}\!\right)^2\\
%&\le a_t^{-1}\sum_{b_t} \Bbb I(b_t>a_t)\sum_{\hat b_t} \hat b_t^{-1}
\begin{align*}
&\sum_k d_k^2=m\sum_{a_t}\Biggl(\sum_{b_t>a_t}(a_tb_t)^{-1/2}\!\Biggr)^2
+m\sum_{b_t}\Biggl(\sum_{a_t>b_t}(a_tb_t)^{-1/2}\!\Biggr)^2\\
%&=c^2m\sum_{a_t,\,b_{t,1},b_{t,2}>a_t} a_t^{-1}b_{t,1}^{-1/2}b_{t,2}^{-1/2}
%+c^2m\sum_{b_t,\,a_{t,1},a_{t,2}>b_t} b^{-1}a_{t,1}^{-1/2}a_{t,2}^{-1/2}\\
& \le m\Biggl(\sum_{a_t}a_t^{-1}\Biggr)\Biggl(\sum_{b_t}b_t^{-1/2}\Biggr)^2
+ m\Biggl(\sum_{b_t}b_t^{-1}\Biggr)\Biggl(\sum_{a_t}a_t^{-1/2}\Biggr)^2.
\end{align*}
Therefore
\begin{align*}
\frac{\Bigl(\sum_k d_k\Bigr)^2}{\sum_kd_k^2}&\ge m \Biggl(\frac{\sum_{a_t}a_t^{-1}}{\left(\sum_{a_t}a_t^{-1/2}\right)^2}+
\frac{\sum_{b_t}b_t^{-1}}{\left(\sum_{b_t}b_t^{-1/2}\right)^2}\Biggr)^{-1}.
\end{align*}
Observe that $\min(\mathcal A),\,\min(\mathcal B)\ge \delta n$, and $\max([n])=n$. A classic Kantorovich-Schweitzer inequality (\cite{Kan}, \cite{Sch})
states: if $0<x\le x_i\le X$, $\xi_i\ge 0$, $\sum_i \xi_i=1$, then 
\[
\Bigl(\sum_i \xi_i x_i\Bigr)\cdot\Bigl(\sum_i \xi_i x_i^{-1}\Bigr)\le \frac{(X+x)^2}{4Xx}.
\]
Therefore, as $|\mathcal A|=\mu$, we bound
\begin{align}
\frac{\sum_{a_t}a_t^{-1}}{\left(\sum_{a_t}a_t^{-1/2}\right)^2}&\le \mu^{-1}\Bigl(\sum_{a_t}\frac{a_t^{-1/2}}{\sum_{\hat a_t} \hat a_t^{-1/2}}\cdot a_t^{-1/2}\Bigr)\Bigl(\sum_{a_t}\frac{a_t^{-1/2}}{\sum_{\hat a_t} \hat a^{-1/2}}\cdot a_t^{1/2}\Bigr)\notag\\
&\qquad\qquad\le \mu^{-1}\frac{(1+\sqrt{\delta})^2}{4\sqrt{\delta}},\label{Kant}
%\frac{(\omega^{-1/2}+n^{-1/2})^2}{4\mu \omega^{-1/2}n^{-1/2}}\le 0.5\mu^{-1}(n/\omega)^{1/2},
\end{align}
and likewise
\[
\frac{\sum_{b_t}b_t^{-1}}{\left(\sum_{b_t}b_t^{-1/2}\right)^2}\le \nu^{-1}\frac{(1+\sqrt{\delta})^2}{4\sqrt{\delta}}.
\]
Combining the last two bounds and \eqref{S(AW)conc} we arrive at
\begin{lemma}\label{SapprE[S]} If $\mu+\nu=(1-\delta)n$, then for every $\eps\in (0,1)$, 
\begin{equation}\label{P(Ssmall)<}
\begin{aligned}
\Bbb P\Bigl(S\le (1-\eps)\ex[S]\Bigr)&\le\exp\left(-m\,c(\eps,\delta)\frac{\mu\nu}{\mu+\nu}\right),\\
c(\eps,\delta)&:=\frac{2\eps^2\sqrt{\delta}}{(1+\sqrt{\delta})^2}.
\end{aligned}
\end{equation}
\end{lemma}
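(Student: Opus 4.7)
The lemma is essentially a packaging of the ingredients already assembled immediately above the statement; my plan is to combine them and track the constants.

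First I would apply Lemma \ref{W1} in the form \eqref{S(AW)conc} to the linear combination $S=\sum_k d_k w_k$ of independent exponentials. This reduces the task to producing an explicit lower bound on $(\sum_k d_k)^2/\sum_k d_k^2$, which by the displays preceding the lemma is at least $m$ divided by
\[
\frac{\sum_{a_t} a_t^{-1}}{(\sum_{a_t} a_t^{-1/2})^2} + \frac{\sum_{b_t} b_t^{-1}}{(\sum_{b_t} b_t^{-1/2})^2}.
\]

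Next I would bound each of the two fractions by Kantorovich--Schweitzer, exactly as in \eqref{Kant}. Because every element of $\mathcal A\cup\mathcal B$ lies in $[\delta n, n]$, applying the inequality to the probability weights $\xi_t = a_t^{-1/2}/\sum_{\hat a_t}\hat a_t^{-1/2}$ with $x_i = a_t^{1/2}\in[\sqrt{\delta n},\sqrt n]$ yields the bound $(1+\sqrt\delta)^2/(4\sqrt\delta)$ times a $\mu^{-1}$ prefactor. Symmetrically, the $b_t$-fraction is bounded by the same constant times $\nu^{-1}$.

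Finally I would take the reciprocal of the sum of the two bounds: the $\delta$-dependent factor inverts to $4\sqrt\delta/(1+\sqrt\delta)^2$, while $\mu^{-1}+\nu^{-1}$ inverts to the harmonic-mean quantity $\mu\nu/(\mu+\nu)$. This delivers
\[
\frac{(\sum_k d_k)^2}{\sum_k d_k^2} \ge m\cdot\frac{4\sqrt\delta}{(1+\sqrt\delta)^2}\cdot\frac{\mu\nu}{\mu+\nu},
\]
and substituting into \eqref{S(AW)conc} absorbs the factor $\eps^2/2$ into the claimed constant $c(\eps,\delta) = 2\eps^2\sqrt\delta/(1+\sqrt\delta)^2$. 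There is no real obstacle here: every nontrivial analytic step --- in particular the Kantorovich bound and the crude estimate $(\sum_{b_t>a_t}b_t^{-1/2})^2\le (\sum_{b_t}b_t^{-1/2})^2$ used to decouple the double sum into a product of one-dimensional sums --- has already been carried out in the setup, so what remains is bookkeeping to match the stated constant.
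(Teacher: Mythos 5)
Your proposal is correct and follows essentially the same route as the paper: apply \eqref{C5} (via \eqref{S(AW)conc}), decouple the double sums to bound $\sum_k d_k^2$, invoke Kantorovich--Schweitzer exactly as in \eqref{Kant} using the fact that $\mathcal A\cup\mathcal B\subset[\delta n,n]$, and then combine the reciprocal with the harmonic-mean identity $\left(\mu^{-1}+\nu^{-1}\right)^{-1}=\mu\nu/(\mu+\nu)$ to land on the stated constant $c(\eps,\delta)$. All steps and constants check out.
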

\begin{corollary}\label{break}
\begin{equation}\label{exp +?}
\Bbb P(A,B;\mathcal W)\le \exp\left(-m\,c(\eps,\delta)\frac{\mu\nu}{\mu+\nu}\right) +\exp\bigl(-c(1-\eps)\ex[S]\bigr).
\end{equation}
\end{corollary}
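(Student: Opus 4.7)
The plan is to combine the already-established bound $\Bbb P(A,B;\mathcal W)\le \ex[e^{-cS}]$ from \eqref{P(A,B:W)<E[exp(-)]} with the lower-tail concentration estimate for $S$ from Lemma \ref{SapprE[S]} via the standard split-the-expectation trick. Specifically, I would decompose the expectation according to whether the random variable $S$ lies above or below the threshold $(1-\eps)\ex[S]$.

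First I would write
\[
\ex\bigl[e^{-cS}\bigr]=\ex\bigl[e^{-cS}\Bbb I(S\le (1-\eps)\ex[S])\bigr]+\ex\bigl[e^{-cS}\Bbb I(S>(1-\eps)\ex[S])\bigr].
\]
On the first event we simply use the crude bound $e^{-cS}\le 1$, so that term is at most $\Bbb P\bigl(S\le (1-\eps)\ex[S]\bigr)$, which by Lemma \ref{SapprE[S]} does not exceed $\exp\bigl(-m\,c(\eps,\delta)\mu\nu/(\mu+\nu)\bigr)$. On the second event, $S>(1-\eps)\ex[S]$ forces $e^{-cS}<e^{-c(1-\eps)\ex[S]}$, a deterministic bound; the indicator is at most $1$, so this term is at most $\exp\bigl(-c(1-\eps)\ex[S]\bigr)$. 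Summing the two contributions and invoking \eqref{P(A,B:W)<E[exp(-)]} yields the stated inequality.

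There is no real obstacle here — the only thing to check is that the two inputs are valid in this regime. The bound $\Bbb P(A,B;\mathcal W)\le \ex[e^{-cS}]$ comes directly from \eqref{I(W)P<} together with the conditional independence/negative association of the edge non-indicators established before \eqref{basic}, and the concentration bound of Lemma \ref{SapprE[S]} applies since we assumed $\mu+\nu=(1-\delta)n$ with $\delta$ bounded away from $0$ and $1$, exactly the hypothesis that underwrites the Kantorovich step \eqref{Kant}. So the corollary is essentially a two-line consequence and I would present it that way.
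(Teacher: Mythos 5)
Your proof is correct and is precisely the argument the paper leaves implicit behind the word ``Immediate'': split $\ex[e^{-cS}]$ at the threshold $(1-\eps)\ex[S]$, bound the low-$S$ piece by Lemma \ref{SapprE[S]} and the high-$S$ piece by the deterministic inequality $e^{-cS}\le e^{-c(1-\eps)\ex[S]}$, then combine with \eqref{P(A,B:W)<E[exp(-)]}.
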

\begin{proof} Immediate, by Lemma \ref{SapprE[S]} and \eqref{P(A,B:W)<E[exp(-)]}.
\end{proof}

\subsection{Bounding $\ex[S]$ from below.} To make the bound in Corollary \ref{break} usable, we need to find an explicit lower bound
for $\ex[S]$.  By \eqref{ESAW>}, we have
\begin{equation}\label{E[S]=prod}
\ex[S]=cm\Biggl(\sum_{a\in\mathcal A}a^{-1/2}\!\Biggr)\Biggl(\sum_{b\in\mathcal B}b^{-1/2}\!\Biggr).
\end{equation}
Using the bound
\[
j^{-1/2}\ge \psi(j)\boldsymbol{\boldsymbol{:}}=2\bigl((j+1)^{1/2}-j^{1/2}\bigr),
\]
we get a slightly cruder bound  
%\begin{equation}\label{P(A,B)<?}
%\Bbb P(A,B;\mathcal W)\le\Biggl[1+\frac{c}{\mu+\nu}\Biggl(\sum_{a\in A}\psi(a)\Biggr)
%\Biggl(\sum_{b\in B}\psi(b)\Biggr)\Biggr]^{-m(\mu+\nu)}.
%\end{equation}
\[
\ex[S]\ge m f(\mathcal A,\mathcal B), \quad f(\mathcal A,\mathcal B):=\Biggl(\sum_{a\in\mathcal A}\psi(a)\!\Biggr)\Biggl(\sum_{b\in\mathcal B}\psi(b)\!\Biggr).
\]
Advantage of this replacement is the ease of summing $\psi(j)$ over uninterrupted intervals. Which pair $(\mathcal A^*,\mathcal B^*)$
minimizes $f(\mathcal A,\mathcal B)$? 
%Which partition $C=A\cup B$, ($|A|=\mu$, $|B|=\nu$), ($C=[n-\mu-\nu+1,n]$), {\it minimizes\/}
%\[
%f_C(A,B)\boldsymbol{\boldsymbol{:}}=\Biggl(\sum_{a\in A} \psi(a)\Biggr)\Biggl(\sum_{b\in B} \psi(b)\Biggr)?
%\]
If we swap any two vertices $\a\in A^*$ and $\be\in B^*$,  then necessarily
\[
f\Bigl((\mathcal A^*\setminus\{\a\})\cup\{\be\}, (\mathcal B^*\setminus\{\be\})\cup\{\a\}\Bigr)\ge f(\mathcal A^*,\mathcal B^*),
\]
or equivalently
\begin{equation}\label{()x()<0}
\bigl(\psi(\a)-\psi(\be)\bigr)\left(\sum_{b\in \mathcal B^*\setminus\{\be\}}\psi(b)\,\,-\sum_{a\in \mathcal A^*\setminus\{\a\}}\psi(a)\!\right)\le 0.
\end{equation}
Suppose that
\begin{equation}\label{sum_b(ge)sum_a}
\sum_{b\in \mathcal B^*}\psi(b)\ge\sum_{a\in \mathcal A^*}\psi(a).
\end{equation}
If for some $\a\in \mathcal A^*$ and $\be\in \mathcal B^*$ we have $\psi(\a)>\psi(\be)$, then, by \eqref{()x()<0}, 
\[
\sum_{b\in B^*\setminus\{\be\}}\psi(b)\,\,-\sum_{a\in A^*\setminus\{\a\}}\psi(a)\le 0,
\]
which contradicts the combination of \eqref{sum_b(ge)sum_a} and  the assumption that $\psi(\a)>\psi(\be)$.
So the minimizer $(\mathcal A^*,\mathcal B^*)$ meets  the necessary condition: if \linebreak $\sum_{b\in \mathcal B^*}\psi(b)\ge (\le \text{resp.})\sum_{a\in \mathcal A^*}\psi(a)$, then $\mathcal A^*$ ($\mathcal B^*$ resp.) is the set of $\mu$ ($\nu$ resp.) largest elements in 
$\mathcal C=[n-\mu-\nu+1,n]$. 
%{\color{blue}
%Let us push the sets $A^*$ and $B^*$ all the way to the right, preserving throughout
%the ordering of the vertices. Let $A_t$, $B_t$, $C_t=A_t\cup B_t$ denote the terminal sets. Since $\psi(\cdot)$ is decreasing, we have $f_C(A^*,B^*)\ge f_{C_t}(A_t,,B_t)$. For $|A|=\mu$, $|B|=\nu$, so that $|A^*|=\mu$, $|B^*|=\nu$,  
So there are two possibilities for the pair $(\mathcal A^*,\mathcal B^*)$:
\begin{equation}\label{cases}
\begin{aligned}
&\text{\bf(1)\/}\,\, \mathcal A^*=\{n-\mu-\nu+1,\dots, n-\nu\},\quad \mathcal B=\{n-\nu+1,\dots, n\},\\
&\text{\bf (2)\/}\,\mathcal A^*=\{n-\mu+1,\dots, n\},\quad \mathcal B^*=\{n-\mu-\nu+1,\dots, n-\mu\}.
\end{aligned}
\end{equation}
In the first case, by telescoping the sums,
\begin{align}
&\qquad f(\mathcal A^*,\mathcal B^*)\boldsymbol{:}=\Biggl(\sum_{j=n-\mu-\nu+1}^{n-\nu}\psi(j)\!\Biggr)\cdot\Biggl(\sum_{j=n-\nu+1}^n\psi(j)\!\Biggr)\quad \notag\\
&=4\bigl(\sqrt{N-\nu}-\sqrt{N-\mu-\nu}\bigr)\bigl(\sqrt{N}-\sqrt{N-\nu}\bigr)=\boldsymbol{\boldsymbol{:}} h(\mu,\nu),\label{hnmunu}
\end{align}
$(N\boldsymbol{\boldsymbol{:}}=n+1)$, and  $f(A^*,\mathcal B^*)= h(\nu,\mu)$ in the second case.
After some algebra it follows that the first case holds for $\nu\le\mu$, and the second case for $\mu\le\nu$. So
\begin{equation}\label{g>}
f(\mathcal A^*,\mathcal B^*)=g(\mu,\nu)\boldsymbol{:}=\min\bigl(h(\mu,\nu),h(\nu,\mu)\bigr)=\left\{\begin{aligned}
&h(\mu,\nu),&\text{if }\mu\ge\nu,\\
&h(\nu,\mu),&\text{if }\mu\le\nu.\end{aligned}\right.
\end{equation}
Combining this formula with \eqref{P(Ssmall)<} and \eqref{E[S]=prod} we have proved
\begin{equation}\label{P(A,B)<(mu,nu)}
\Bbb P(A,B;\mathcal W)\le \exp\left(\!-m\,c(\eps,\delta)\frac{\mu\nu}{\mu+\nu}\!\right) +\exp\bigl(-m\,c(1-\eps)g(\mu,\nu)\bigr).
\end{equation}
By the union bound we arrive at 
\begin{theorem}\label{Emu,nu<} Let $\Bbb P(\mu,\nu)$ denote the probability that the event $\mathcal W$ holds, and that there exists a pair $(A,B)$ of vertex sets in $G_m^n$, 
($\mu+\nu= (1-\delta) n$), with no edge joining $A$ and $B$. Then
\begin{equation}\label{P(mu,nu)<}
\begin{aligned}
\Bbb P(\mu,\nu)&\le 2\binom{n}{\mu+\nu}
\binom{\mu+\nu}{\mu}\cdot \exp\bigl(-mH_{\eps,\delta}(\mu,\nu)\bigr),\\
H_{\eps,\delta}(\mu,\nu)&:=\min\left\{\frac{2\eps^2\sqrt{\delta}}{(1+\sqrt{\delta})^2}\frac{\mu\nu}{\mu+\nu}\,;\,c(1-\eps)g(\mu,\nu)\right\},\end{aligned}
\end{equation}
where $g(\mu,\nu)$ is given by \eqref{g>} and \eqref{hnmunu}, and $c=c(n)=0.5-o(1)$.
\end{theorem}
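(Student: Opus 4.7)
The plan is to union-bound the single-pair estimate \eqref{P(A,B)<(mu,nu)} over all ordered pairs $(A,B)$ of disjoint vertex sets with $|A|=\mu$ and $|B|=\nu$. The number of such pairs is $\binom{n}{\mu+\nu}\binom{\mu+\nu}{\mu}$, which produces the combinatorial prefactor, while the factor $2$ and the minimum in $H_{\eps,\delta}$ come from bounding the sum of two exponentials in \eqref{P(A,B)<(mu,nu)} by twice the larger one.

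First I would verify that the bound \eqref{P(A,B)<(mu,nu)} is \emph{uniform} in the choice of $(A,B)$. The right-shift argument leading to \eqref{I(W)P<} transports each vertex of $C=A\cup B$ into the interval $[\delta n, n]$ while preserving the initial ordering inside $A$ and $B$; it replaces the $W_{ma}W_{mb}$ denominators by $a_t b_t$ with $a_t,b_t\in[\delta n, n]$, and applies to any pair of the prescribed cardinalities. Consequently the concentration bound for $S$ from Lemma \ref{SapprE[S]} depends on the pair only through $(\mu,\nu,\delta)$. Likewise, the lower bound $\ex[S]\ge m\,g(\mu,\nu)$ of \eqref{E[S]=prod}--\eqref{g>} was derived by minimising $f(\mathcal A,\mathcal B)$ over all terminal configurations with $|\mathcal A|=\mu$, $|\mathcal B|=\nu$, $\mathcal A\cup\mathcal B\subset[\delta n, n]$ via the swap argument and the Kantorovich--Schweitzer inequality; this minimum is a lower bound regardless of the starting pair $(A,B)$.

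Second, I would bound each of the two exponentials on the right-hand side of \eqref{P(A,B)<(mu,nu)} by $\exp(-mH_{\eps,\delta}(\mu,\nu))$, where $H_{\eps,\delta}$ is the minimum of the two exponents of \eqref{P(A,B)<(mu,nu)} divided by $m$. Their sum is then at most $2\exp(-mH_{\eps,\delta}(\mu,\nu))$. Multiplying by the number of pairs yields precisely \eqref{P(mu,nu)<}.

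The bulk of the technical work is already done in the two preceding subsections (deriving \eqref{P(A,B)<(mu,nu)} and identifying the minimiser $g(\mu,\nu)$); the only point requiring care here is confirming that both the Chernoff concentration of $S$ and the extremal value $g(\mu,\nu)$ of $\ex[S]/(cm)$ are \emph{pair-independent}, so that a naive union bound suffices. I anticipate no further obstacle.
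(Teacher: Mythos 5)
Your proposal is correct and follows essentially the same route as the paper: union-bound the single-pair estimate \eqref{P(A,B)<(mu,nu)} over the $\binom{n}{\mu+\nu}\binom{\mu+\nu}{\mu}$ choices of $(A,B)$, and bound the sum of the two exponentials by twice their minimum, which is $\exp(-mH_{\eps,\delta}(\mu,\nu))$. The paper's proof is a one-liner noting the binomial count; your additional comments about the pair-independence of the bound in \eqref{P(A,B)<(mu,nu)} (via the right-shift argument and the extremal choice of $g(\mu,\nu)$) correctly spell out what the paper leaves implicit.
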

\begin{proof} Immediate, since the product of two binomals is the total number of ways to choose a pair of two subsets $A$ and $B$
of cardinality $\mu$ and $\nu$.
\end{proof} 
%{\bf Note.\/} We could have replaced the first binomial coefficient with a smaller $\binom{n-\lceil \delta n\rceil+1}{\mu+\nu}$.
\subsection {\bf Example 1.\/} Let $\mu=\nu=\be n$, and $\be<1/2$. In this case it follows from Lemma \ref{Emu,nu<} that 
$P(\mu,\nu)\le \exp(-nJ_m(\be)+o(n))$ where $J_m(\be)=\min(J_{m,1}(\be), J_{m,2}(\be))$,
\begin{align*}
%&\qquad\qquad\quad\qquad P(\mu,\nu)\le \exp(-nJ_m(\be)+o(n)),\\
J_{m,1}(\be)&=I(\be) + m\cdot 4c(1-\eps)\bigl(\sqrt{1-\be}-\sqrt{1-2\be}\bigr)\bigl(1-\sqrt{1-\be}\bigr),\\
J_{m,2}(\be)&=I(\be)+m\cdot\frac{\eps^2\be\sqrt{1-2\be}}{(1+\sqrt{1-2\be})^2},\\
I(\be)&:=2\be\log\be+(1-2\be)\log(1-2\be).
\end{align*}
Maple shows that, for $\eps=6/7$, both $J_{16,1}(0.492)$ and $J_{16,2}(0.43)$ are positive. Therefore for all $m\ge 16$ there exist $\be_{m,1},\,\be_{m,2}\in (0,1/2)$ such that $J_{m,i}(\be)>0$ for $\be$ close enough to $\be_{m,i}$ from above. This means that,
for an arbitrarily small $\sigma>0$, with probability exponentially close to $1$, $G_m^n$ has no subsets $A$ and $B$, each of size above $(1+\sigma)\max(\be_{m,1},\be_{m,2})n$, and with no edge joining them.  
A closer look shows that 
\[
\be_{m,1}\sim \frac{4\log m}{m(1-\eps)},\quad \be_{m,2}=\exp\Bigl(-\eps^2m(1/8+o(1))),\quad m\to\infty.
\]
In particular, for every $\eps\in (0,1)$, $\be_{m,2}\ll \be_{m,1}$; so for $m$ large enough, whp there are no such pairs $(A,B)$ with each set of cardinality $\gtrsim n\frac{4\log m}{m}$.
%Maple shows that $J_3(\be)<0$ for all $\be>0.486$.  So w.h.p. there are no two sets $A$ and $B$ of cardinality $>\be(3) n$, ($\be(3)\approx
%0.486$), with no edges connecting them. Obviously such thresholds $\be=\be(m)$ exist for every $m\ge 3$, and they decrease as $m$ increases. 

To compare, it was proved in \cite{FGPR}  that such a threshold $\be^*(m)$ exists for $m\ge 24$,
and that $\be^*(m)\sim \frac{16 \log m}{m}$ for $m\to\infty$.  According to Lemma 9, part (i) in \cite{FGPR}, whenever such $\be$ exists, deterministically there is a path of length $(1-2\be)n$, at least. 
So our bound implies that whp $G_{16}^n$ already contains a path of length $\approx (1-2\be_{m,1})n \approx 0.016 n$. \\

{\bf Example 2.\/} Given $S\subset [n]$, let $N(S)$ be the set of outside neighbors of $S$. We say that $S$ vertex-expands at rate $\rho$
if $|N(S)|\ge\rho  |S|$. For a generic set $A$,
$|A|=\boldsymbol{:}\mu$, there is no edge between $A$ and $[n]\setminus (A\cup N(A))$. If $|N(A)|\le\rho |A|$, then $\boldsymbol|[n]\setminus (A\cup N(A))\boldsymbol|\ge n-(1+\rho)|A|$. Assuming that $n-(1+\rho)|A|>0$, there exists a set $B\subset [n]\setminus (A\cup N(A))$
with $|B|=\nu\boldsymbol{:}=n-(1+\rho)|A|$, which is not joined to $A$ even by a single edge. 

Therefore the probability that the event $\mathcal W$ holds and
there is a set $A$ with $|N(A)|\le \rho|A|$ is bounded above by $\Bbb P(\mu,\nu)$.
% Therefore the probability that the event $\mathcal W$ holds and there is a set $A$ with $|N(A)|\le\delta n$ is bounded above by $\Bbb P(\mu,\nu)$, where $\mu+\nu=n(1-\delta)$. 
%Denote $\mu/n=x$; then
% we have $\Bbb P(\mu,\nu)\le \exp(-nK_m(x)+o(n))$,
%where $K_m(\rho,x)=\min(K_{m,1}(\rho,x), K_{m,2}(\rho, x))$, and
%\begin{equation*}%\label{Km}
%\begin{aligned}
%K_{m,1}(\rho,x)&=(1-\delta)\log\frac{1}{1-\delta}+\delta\log\frac{1}{\delta}\\
%&\quad +m\cdot c(1-\eps)g(x,1-(1+\rho)x),\\
%K_{m,2}(\rho,x)&=\rho x\log(\rho x)+x\log x+(1-(1+\rho)x)\log(1-(1+\rho)x)\\
%&\quad +m\cdot\frac{2\eps^2\sqrt{x\rho}}{(1+\sqrt{x\rho})^2}\frac{x(1-x(1+\rho))}{1-x\rho}.
%-m(1-\rho x)\log\left(1+\frac{2g\bigl(x, 1-(1+\rho)x\bigr)}{1-\rho x}\cdot \right),
%\end{aligned}
%\end{equation*}
%$\nu\boldsymbol{\boldsymbol{:}}=n-\mu(1+\rho)$. Clearly there is no edge between $A$ and $[n]\setminus (A\cup N(A))$. Therefore the probability that the event $\mathcal W$ holds and
%there is a set $A$ with $|N(A)|=\rho|A|$
%bounded above by $\Bbb P(\mu,\nu)$, where $\mu+\nu=n-\mu\rho$. %Consider the case $\mu\rho \ge \omega$. 
Denote $\mu/n=x$, then
$y\boldsymbol{\boldsymbol{:}}=\nu/n=1-(1+\rho)x$, so that $x<(1+\rho)^{-1}$.  If $\delta=x\rho$, then $\mu+\nu= (1-\delta)n$, so, assuming
that $x\rho$ is bounded away from $0$ and $1$, by Lemma \ref{Emu,nu<}, we have 
 \begin{align*}
 \Bbb P(\mu,\nu)&\le \exp(-nK_m(\rho,x)+o(n)),\\
 K_m(\rho,x)&=\min(K_{m,1}(\rho,x), K_{m,2}(\rho, x)), \quad x\in (0,(1+\rho)^{-1},
\end{align*}
 and explicitly
\begin{equation*}%\label{Km}
\begin{aligned}
K_{m,1}(\rho,x)&=H(\rho,x)+m\cdot c(1-\eps)g(x,1-(1+\rho)x),\\
K_{m,2}(\rho,x)&=H(\rho,x)+m\cdot\frac{2\eps^2\sqrt{x\rho}}{(1+\sqrt{x\rho})^2}\frac{x(1-x(1+\rho))}{1-x\rho},\\
H(\rho,x)&=\rho x\log(\rho x)+x\log x+(1-(1+\rho)x)\log(1-(1+\rho)x),\\
%Here $g(x,y)\boldsymbol{\boldsymbol{:}}=\lim n^{-1}g(\lceil xn\rceil, \lceil yn\rceil)$; so by \eqref{hnmunu} and \eqref{g>},
%\begin{align*}
g(x,y)&=\min(h(x,y),h(y,x))=\left\{\begin{aligned}
&h(x,y),&&\text{if }x\ge y,\\
&h(y,x),&&\text{if }x\le y,\end{aligned}\right.\\
h(x,y)&:=4\Bigl(\sqrt{1-x}-\sqrt{1-x-y}\Bigr)\Bigl(1-\sqrt{1-y}\Bigr).
\end{aligned}
\end{equation*}
%\end{align*}
It follows that
\begin{align*}
K_{m,i}(\rho,x)&\sim (1+\rho) x\log(\rho x),\quad x\to 0,\\
K_{m,i}(\rho,(1+\rho)^{-1})&=\frac{\rho}{1+\rho}\log\frac{\rho}{1+\rho}+\frac{1}{1+\rho}\log \frac{1}{1+\rho};
\end{align*}
so $K_{m,i}(\rho, x)<0$ for $x$ close to the extreme points $0$ and $(1+\rho)^{-1}$. Judging by Maple-aided computations, $K_{m,i}(\rho,x)$ either does not have positive zeros in $(0, (1+\rho)^{-1})$ or, {\it for $m$ large enough\/}, has two zeros, $x_i(m,\rho,\eps)<X_i(m,\rho,\eps)$, and $K_{m,i}(\rho,x)>0$
for $x\in I_i(m,\rho,\eps):= (x_i(m,\rho,\eps), X_i(m,\rho,\eps))$. It means that, for those $m$, $K_m(\rho,x)>0$ on $I(m,\rho,\eps):=I_1(m,\rho,\eps)\cap
I_2(m,\rho,\eps)$. So whp every set $A$ with $|A|/n\in I(m,\rho,\eps)$ vertex expands at rate $\rho$ at least. For instance, 
\begin{align*}
I(m=39, \rho=1,\eps=0.6)&\supset (0.288, 0.321),\\
I(m=500, \rho=1,\eps=0.6)&\supset (0.0155, 0.460),\\
I(m=65, \rho=2,\eps=0.6)&\supset (0.242, 0.252),\\
I(m=500, \rho=2,\eps=0.6)&\supset (0.0332, 0.305),
\end{align*}
In particular, for $m=500$, the right endpoint is close to $(1+\rho)^{-1}$, and the left endpoint is close to zero.
It is not difficult to show that, as $m\to\infty$,
\begin{align*}
X_r(m,\rho,\eps)&\sim \frac{\gamma_1(\rho)}{(1-\eps)^2}\cdot\left(\frac{\log m}{m}\right)^2,\quad \gamma_1(\rho)=4(1+\rho)^2(\sqrt{1+\rho}+\sqrt{\rho})^2,\\
x_2(m,\rho,\eps)&\sim \frac{\gamma_2(\rho)}{\eps^4}\cdot\left(\frac{\log m}{m}\right)^2,\qquad\,\gamma_2(\rho)=\frac{(1+\rho)^2}{4\rho},
\end{align*}
and
\begin{align*}
X_r(m,\rho,\eps)&=\frac{1}{1+\rho}-\frac{1+o(1)}{\sqrt{m(1-\eps)}}\,G(\rho),\quad G(\rho):=\frac{\rho^{1/4}}{(1+\rho)^{5/4}}H^{1/2}(\rho),\\
X_2(m,\rho,\eps)&=\frac{1}{1+\rho}-\frac{(1+o(1))\Bigl(1+\sqrt{\frac{\rho}{\rho+1}}\Bigr)^2}{2m\eps^2\sqrt{\rho(1+\rho)}}\,H(\rho),\\
H(\rho)&:=\frac{\rho}{1+\rho}\log\frac{\rho}{1+\rho}+\frac{1}{1+\rho}\log(1+\rho).
\end{align*}
Clearly $X_r(m,\rho,\eps)<X_2(m,\rho,\eps)$ for $m\to\infty$. We conclude that, for $m$ large,  whp every set $A$ of cardinality between 
\[
(1+\sigma)\max\bigl(X_r(m,\rho,\eps),x_2(m,\rho,\eps)\bigr)n\quad \text{and}\quad (1-\sigma) X_r(m,\rho,\eps)n
\]
vertex expands at rate $\rho$ at least. For large $m$ we get an asymptotically smallest $\max\bigl(X_r(m,\rho,\eps),x_2(m,\rho,\eps)\bigr)$
 by choosing $\eps$ equal to $\eps(\rho)$ the root of the equation
\[
\frac{\gamma_1(\rho)}{(1-\eps)^2}=\frac{\gamma_2(\rho)}{\eps^4}\Longrightarrow \eps(\rho)=2\left(1+\sqrt{1+16(\sqrt{\rho(\rho+1)}+\rho)}\right)^{-1}.
\]

{\bf Example 3.\/} It was proved in \cite{Hof} that a preferential attachment graph, more general than $G_m^n$, is whp connected for $m>1$,
which is the same as saying that every non-empty set $A\neq [n]$ is connected to its complement $A^c$ by at least one edge. We use 
Theorem \ref{Emu,nu<} to determine a range of $|A|$ for which the probability of no edge between $A$ and $A^c$ is exponentially
small.

Consider $A$ such that $x:=|A|/n < (2+\gamma)^{-1}$, for some $\gamma>0$. If no edge connects $A$ and $A^c$, then no
edge connects $A$ and any $B\subset A^c$ where $y:= |B|/n=1-\delta-x$, $\delta=\gamma x$, Clearly $x+y=1-\delta$, i.e. $\mu+\nu=(1-\delta)n$, where $\mu=|A|$, $\nu=|B|$, {\it and\/} $x<y$ since $x< (2+\gamma)^{-1}$.
%Consider $A$ such that $x:=|A|/n < (1-\delta)/2$ for some $\delta\in (0,1)$. If no edge connects $A$ and $A^c$, then no
%edge connects $A$ and any $B\subset A^c$ where $y:= |B|/n=1-\delta-x$. Clearly $x+y=1-\delta$, i.e. $\mu+\nu=(1-\delta)n$, where
%$\mu=|A|$, $\nu=|B|$, and $x<y$. 
By Lemma \ref{Emu,nu<}, we have 
\begin{align*}
\Bbb P(\mu,\nu)\le \exp\bigl(-n\mathcal K_m(x)+o(n)\bigr),\quad 
 \mathcal K_m(x)=\min\bigl(\mathcal K_{m,1}(x), \mathcal K_{m,2}(x)\bigr),
 \end{align*}
where
\begin{align*}
\mathcal K_{m,1}(x)&=\mathcal H(x)+m\cdot 4c(1-\eps)\bigl(\sqrt{1-x}-\sqrt{\gamma x}\bigr)\bigl(1-\sqrt{1-x}\bigr),\\
\mathcal K_{m,2}(x)&=\mathcal H(x)+m\cdot\frac{2\eps^2\sqrt{\gamma x}}{(1+\sqrt{\gamma x})^2}\frac{x(1-x(\gamma+1))}{1-\gamma x},\\
\mathcal H(x)&=\gamma x\log(\gamma x)+x\log x+(1-x(\gamma+1))\log(1-x(\gamma+1)),
\end{align*}
and $c=0.5+o(1)$. We want to find $x_m=x_m(\gamma)$ such that $\mathcal K_m(x)<0$ for $x\in (x_m(\gamma), (\gamma+2)^{-1}]$ if $m$ is sufficiently
large. %This will imply non-existence of isolated vertex sets $A$ with $|A|/n\in  [x_m(\gamma), (2+\gamma)^{-1}]$ with probability exponentially
%close to $1$.
%Observe that the factors by $m$ in $\mathcal K_{m,1}$ and $\mathcal K_{m,2}$ are of order $x$ and $x^{3/2}$ respectively, as $x\to 0$.
%With some careful calculus it follows that both $x^{-1}\mathcal H(x)$ and $x^{-3/2}\mathcal H(x)$ are increasing for $x\in (0,(2+\gamma)^{-1})$.

{\bf(a)\/} Since $(1-z)\log(1-z)\ge -z$, $1-(1-x)^{1/2}\ge x/2$, and $x\le (\ga+2)^{-1}$, we have
\[
x^{-1}\mathcal K_{m,1}(x)\ge\gamma \log(\gamma x) +\log x -(1+\gamma)+2mc(1-\eps)
%\!\left(\!\sqrt{\frac{\ga+1}{\ga+2}}-\sqrt{\frac{\ga}{\gamma+2}}\right)\!>0
\frac{\sqrt{\ga+1}-\sqrt{\ga}}{\sqrt{\ga+2}}>0,
\]
provided that
\[
x>x_{m,1}(\ga):=\exp\Biggl(\!\frac{\ga+1-\ga\log\ga}{\ga+1}\!\Biggr)\cdot\exp\Biggl(\!-\frac{2mc(1-\eps)\Bigl(\sqrt{\ga+1}-\sqrt{\ga}\Bigr)}{(\ga+1)\sqrt{\ga+2}}\!\Biggr).
\]
and  $x_{m,1}(\ga)<(\ga+2)^{-1}$ for
\[
m>m_1(\ga):=\frac{\bigr[\ga+1-\ga\log\ga+(\ga+1)\log(\ga+2)\bigr]\sqrt{\ga+2}}{2c(1-\eps)\bigl(\sqrt{\ga+1}-\sqrt{\ga}\bigr)}.
\] 

 {\bf (b)\/} The function $(1-x(\ga+1))(1-\ga x)^{-1}(1+\sqrt{\ga x})^{-2}$ increases on $[0,(\ga+2)^{-1}]$, so that
%\[
%f(x)\ge f(\ga)=\frac{\ga+2}{2\Bigl(\sqrt{\ga+2}+\sqrt{\ga}\Bigr)^2}.
%\]
%Therefore 
\[
x^{-1}\mathcal K_{m,2}(x)> -(\ga+1)\log\frac{e}{x\ga}+m\eps^2 h(\ga) x^{1/2},\quad h(\ga):=\frac{(\ga+2)\sqrt{\ga}}{\bigl(\sqrt{\ga+2}+\sqrt{\ga}\bigr)^2}.
\]
Observe that $x^{-1/2}\log(e/x\ga)$ strictly decreases with $x$ increasing. So for
\[
m>m_2(\gamma):=\frac{\ga+1}{\eps^2h(\ga)}\cdot \left.x^{-1/2}h(x)\right|_{x=(\ga+2)^{-1}}
\]
there exists a unique root  $x_{m,2}(\ga)\in (0,(\ga+2)^{-1})$ of the equation 
\begin{equation}\label{xm2=}
x^{-1/2}\log\frac{e}{x\ga}=\frac{m\eps^2h(\ga)}{\ga+1},
\end{equation}
implying that $\mathcal K_{m,2}(x)>0$ for $x\in (x_{m,2}(\ga), (\ga+2)^{-1}]$. 

For $m>m(\ga):=\max(m_1(\ga),m_2(\ga))$, $x_m=\max(x_{m,1}(\ga),x_{m,2}(\ga))$ has the desired property: with probability
exponentially close to $1$ there are no isolated sets $A$ with $|A|/n\in(x_m(\ga), (\ga+2)^{-1})$. It follows from \eqref{xm2=} that for $m$ large
\[
x_{m,2}=\a(\gamma)\left(\frac{\log m}{\eps^2m}\right)^2 (1+O(\log\log m/\log m)),\quad \a(\ga):=\left(\frac{2(\ga+1)}{h(\ga)}\right)^2,
\]
meaning that $x_m= x_{m,2}$ for $m$ large enough.

\subsection{\label{revise}} The bound \eqref{P(A,B)<(mu,nu)} for $\Bbb P(A,B;\mathcal W)$ does not depend on choice of partition $C=A\cup B$, $C=[n-\mu-\nu+1,n]$. Is there a room for improvement?
 
Suppose $\mu\le\nu$, so that $A^*=[n-\mu+1,n]$, $B^*=[n-\mu-\nu+1, n-\mu]$. Generalizing the necessary condition \eqref{()x()<0},
let us swap $\mathcal A\subseteq A^*$ and $\mathcal B\subseteq B^*$, $|\mathcal A|=|\mathcal B|=r\le \min(\mu,\nu)$, so that
a new partition is $A=(A^*\setminus \mathcal A)\cup \mathcal B$, $B=(B^*\setminus \mathcal B)\cup \mathcal A$. Then, denoting
$x=\sum_{b\in\mathcal B} \psi(b)-\sum_{a\in \mathcal A}\psi(a)$,
\begin{align*}
f_C(A,B)&=f_C(A^*,B^*)+ x\left(\sum_{b\in B^*}\psi(b)-\sum_{a\in A^*}\psi(a)\right)-x^2\\
&=g_n(\mu,\nu) +x\left(\sum_{b\in B^*}\psi(b)-\sum_{a\in A^*}\psi(a)-x\right).
\end{align*}
Since $\psi(j)$ is decreasing, $\min\limits_{\mathcal A,\mathcal B}x$ ($\max\limits_{\mathcal A,\mathcal B}x$ resp.) is attained at $\mathcal A$ equal to the set of $r$ smallest (largest resp.) elements of $A^*$,
and $\mathcal B$ equal to the set of $r$ largest (smallest resp.) elements of $B^*$. So, telescoping the resulting sums and denoting $n+1=N$,
we have
\begin{equation}\label{xi=}
\begin{aligned}
X_r&=\min_{\mathcal A,\,\mathcal B} x=\sum_{j=n-\mu-r+1}^{n-\mu} \psi(j)-\sum_{j=n-\mu+1}^{n-\mu+r}\psi(j)\\
&=2\bigl(\sqrt{N-\mu}-\sqrt{N-\mu-r}-\sqrt{N-\mu+r}+\sqrt{N-\mu}\bigr);\\
x_2&=\max_{\mathcal A,\,\mathcal B}x=\sum_{j=n-\mu-\nu+1}^{n-\mu-\nu+r}\psi(j)-\sum_{j=n-r+1}^n\psi(j)\\
&=2\bigl(\sqrt{N-\mu-\nu+r}-\sqrt{N-\mu-\nu}-\sqrt{N}+\sqrt{N-r}\bigr).
\end{aligned}
\end{equation}
Clearly $x_i$ depend on $r$, the number of elements from $A$ swapped for the same number of elements from $B$. $x_i(0)=0$, and as functions of a continuously varying $r\in [0,\min(\mu,\nu)]=[0,\mu]$, they satisfy $(x_2-X_r)'_r>0$. Therefore $x_2(r)>X_r(r)$ for $r>0$. Further, $f_C(A,B)$ is a concave function of $x$, so for each $r$ $f_C(A,B)$ attains its minimum value at either $X_r$ or $x_2$. Now
\begin{align*}
&x_2\Biggl(\sum_{b\in B^*}\psi(b)-\sum_{a\in A^*}\psi(a)-x_2\!\!\Biggr)-X_r\Biggl(\sum_{b\in B^*}\psi(b)-\sum_{a\in A^*}\psi(a)-X_r\!\!\Biggr)\notag\\
&\qquad\qquad=(x_2-X_r)\left(\sum_{b\in B^*}\psi(b)-\sum_{a\in A^*}\psi(a) -(x_2+X_r)\right)\notag\\
=&2(x_2-X_r)\Bigl(\sqrt{N-\mu}-\sqrt{N-\mu-\nu}-\sqrt{N}+\sqrt{N-r}-(x_2+X_r)\Bigr)\notag\\
&\qquad\qquad\qquad\quad (\text{plugging in }X_r,\,x_2\text{ from }\eqref{xi=})\notag\\
&\qquad\qquad\qquad\qquad=2(x_2-X_r)D(\mu,\nu,r);\notag
\end{align*}
here
\begin{align}
D(\mu,\nu,r)&\boldsymbol{\boldsymbol{:}}=\sqrt{N-\mu-r}+\sqrt{N-\mu+r}\label{Dn(mu,nu,r)=}\\
&\quad-\sqrt{N-\mu-\nu+r}-\sqrt{N-r}.\notag
\end{align}
Since $x_2-X_r>0$, we see that $x_2$ ($X_r$ resp.) is the minimum point if $D(\mu,\nu,r)<0$ (if $D(\mu,\nu,r)>0$ resp.). Now
\begin{align}
&d_{i}(\mu,\nu,r)\boldsymbol{\boldsymbol{:}}= x_i\!\!\left(\sum_{b\in B^*}\psi(b)-\sum_{a\in A^*}\psi(a)-x_i\!\!\right)\!=x_i\!\!\left(\sum_{B^*\setminus\mathcal B}\psi(b)-\!\!\!\sum_{a\in A^*\setminus\mathcal A}\psi(a)\!\!\right)\notag\\
&\qquad\qquad\qquad\quad (\text{plugging in }x_i\text{ and telescoping the sums})\notag\\
&\label{dni=expl}\qquad\,\,\,=\!\left\{\begin{aligned}
&\!4\Bigl(2\sqrt{N-\mu}-\!\sqrt{N-\mu-r}-\!\sqrt{N-\mu+r}\Bigr)&\\
&\times\Bigl(\sqrt{N-\mu-r}-\sqrt{N-\mu-\nu}-\sqrt{N}+\sqrt{N-\mu+r}\Bigr);\!&i=1;\\
&\\
&4\bigl(\sqrt{N-\mu-\nu+r}-\sqrt{N-\mu-\nu}-\sqrt{N}+\sqrt{N-r}\bigr)&\\
&\!\times\Bigl(2\sqrt{N-\mu}-\!\sqrt{N-\mu-\nu+r}-\!\sqrt{N-r}\Bigr),\!&i=2.
\end{aligned}\right.\notag\\
\end{align}
Introduce $g(\mu,\nu,r)=\min\bigl\{f_C(A,B)\bigr\}$ over all partitions $C=A\cup B$ where $A$ and $B$ are obtained from $A^*$ and $B^*$ by replacing some $r$ elements in $A^*$ with $r$ elements from $B^*$. We conclude that, for $\mu\le \nu$,
\begin{equation}\label{gn(mu,nu,r)=}
\begin{aligned}
&\qquad\quad g(\mu,\nu,r)= g(\mu,\nu)+d(\mu,\nu,r),\\
&d(\mu,\nu,r)\boldsymbol{:}=\left\{\begin{aligned}
&d_{1}(\mu,\nu,r),&\text{ if }D(\mu,\nu,r)\ge 0,\\
&d_{2}(\mu,\nu,r),&\text{ if }D(\mu,\nu,r)\le 0,\end{aligned}\right.
\end{aligned}
\end{equation}
see \eqref{Dn(mu,nu,r)=} for $D(\mu,\nu,r)$. So the counterpart of the bound \eqref{P(A,B)<(mu,nu)} is 
\begin{equation}\label{P(A,B)<(mu,nu)better}
\Bbb P(A,B;\mathcal W)\le \exp\Biggl(-m\,c(\eps,\delta)\frac{\mu\nu}{\mu+\nu}\Biggr) +\exp\bigl(-m\,c(1-\eps)g(\mu,\nu,r)\bigr).
\end{equation}
We have proved an extension of Theorem \ref{Emu,nu<}:

\begin{theorem}\label{ExtEmu,nu<} Let $\Bbb P(\mu,\nu,r)$, ($r\le \mu\le\nu$) denote the probability that the event $\mathcal W$ holds, and that there exists a pair $(A,B)$ of vertex sets in $G_m^n$, 
($|A|=\mu$, $|B|=\nu$,  $\mu+\nu=(1-\delta) n$), with exactly $\mu-r$ elements of $A$ being among the $\mu$ largest elements of $A\cup B$, such that  no pair $(a,b)$ \, ($a\in A, b\in B$), is an edge in $G_m^n$. Then
\begin{align*}
 &\Bbb P(\mu,\nu,r)\le \binom{n}{\mu+\nu}
\binom{\mu}{r}\binom{\nu}{r}\\
&\times \left[\exp\Bigl(-m\,c(\eps,\delta)\frac{\mu\nu}{\mu+\nu}\Bigr) +\exp\bigl(-m\,c(1-\eps)g(\mu,\nu,r)\bigr)\right],
\end{align*}
where $g(\mu,\nu,r)$ is given by \eqref{gn(mu,nu,r)=}, and $c=c(n)=0.5-o(1)$.
\end{theorem}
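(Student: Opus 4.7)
The plan is to mirror the proof of Theorem \ref{Emu,nu<}, making two refinements: replace $g(\mu,\nu)$ by $g(\mu,\nu,r)$ from Section \ref{revise}, and inflate the union-bound count to reflect the constraint that $A$ contain exactly $\mu-r$ of the top $\mu$ positions of $C=A\cup B$.

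First, I would fix a pair $(A,B)$ with the stated $r$-swap structure, $|A|=\mu$, $|B|=\nu$, $\mu+\nu=(1-\delta)n$. All of the reasoning from \eqref{basic} through \eqref{P(A,B:W)<E[exp(-)]} carries over verbatim: these bounds rest only on the conditional (negative) independence of the edge indicators and on the pointwise estimate of $\Bbb P(a\not\leftarrow b\boldsymbol|\,\bold W)$, and do not care how $C$ is split into $A$ and $B$. Thus on the event $\mathcal W$ we again have $\Bbb P(A,B;\mathcal W)\le\ex\bigl[e^{-cS}\bigr]$ with the same $S$ and $c=c(n)=0.5-o(1)$.

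Second, I would apply Lemma \ref{SapprE[S]} (whose hypotheses depend only on $\mu,\nu,\delta$) and split the expectation according to whether $S>(1-\eps)\ex[S]$, yielding the two-term bound
\[
\Bbb P(A,B;\mathcal W)\le \exp\!\left(-m\,c(\eps,\delta)\frac{\mu\nu}{\mu+\nu}\right)+\exp\bigl(-c(1-\eps)\ex[S]\bigr).
\]
For the second exponent I would use $j^{-1/2}\ge\psi(j)$ to get $\ex[S]\ge m\,f(\mathcal A,\mathcal B)$ where $\mathcal A,\mathcal B$ are the push-right destinations of $A,B$. The $r$-swap structure forces $(\mathcal A,\mathcal B)$ to be obtained from the optimal $(A^*,B^*)$ of \eqref{cases} by exchanging exactly $r$ elements. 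The calculation of Section \ref{revise}, culminating in \eqref{gn(mu,nu,r)=}, shows that $f(\mathcal A,\mathcal B)$ is minimized at one of two extreme exchanges (determined by the sign of $D(\mu,\nu,r)$), with minimum value $g(\mu,\nu,r)$. Substituting gives \eqref{P(A,B)<(mu,nu)better} for every $(A,B)$ of the prescribed type.

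Third, I would apply the union bound: the number of pairs $(A,B)$ with the $r$-swap structure is $\binom{n}{\mu+\nu}$ (choose $C$) times $\binom{\mu}{r}$ (choose which $r$ of the $\mu$ largest positions of $C$ belong to $B$) times $\binom{\nu}{r}$ (choose which $r$ of the $\nu$ smallest positions of $C$ belong to $A$), matching the factor in the statement. The only substantive new ingredient beyond the proof of Theorem \ref{Emu,nu<} is the minimization of $f(\mathcal A,\mathcal B)$ over $r$-swapped partitions, and I expect this to be the main point where care is required; however, it has already been carried out in Section \ref{revise}, so here it is merely invoked. All remaining estimates are inherited unchanged.
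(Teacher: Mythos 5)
Your proposal is correct and follows essentially the same route as the paper: the paper's ``proof'' of Theorem~\ref{ExtEmu,nu<} is precisely the computation of Section~\ref{revise} (the derivation of $g(\mu,\nu,r)$ as the minimum of $f(\mathcal A,\mathcal B)$ over $r$-swapped push-right destinations) combined with the union bound over the $\binom{n}{\mu+\nu}\binom{\mu}{r}\binom{\nu}{r}$ admissible partitions, which is exactly what you describe. You correctly identify that the estimates through \eqref{P(A,B:W)<E[exp(-)]} and Lemma~\ref{SapprE[S]} are partition-independent and carry over verbatim, that the $r$-swap constraint on $(A,B)$ inside $C$ transfers (via the order-preserving push-right map) to an $r$-swap constraint on $(\mathcal A,\mathcal B)$ inside $[n-\mu-\nu+1,n]$, and that the combinatorial count matches.
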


\section{Maximal recursive trees} 
Each vertex $v\in [n]$ is a root of a tree $T$ such that on every path going away from $v$ the vertices increase. In other words, $T$ is
a recursive tree on $V(T)$. $T$ is maximal if no outside vertex selects a vertex in $V(T)$. The size of a maximal tree rooted at $v$ can be viewed as an influence measure of $v$. 
\begin{theorem}\label{maxtree} {\bf(1)\/} Whp there are no maximal recursive trees with vertex sets from the first $\mu=n^{\frac{m}{m+4}-\eps}$ vertices.
{\bf (2)\/} For $\mu\to\infty$ and $\mu=o(n)$, whp no subset of $[\mu]$ of cardinality comparable to $\mu$ is a vertex set of a maximal recursive tree.
\end{theorem}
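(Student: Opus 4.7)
The plan is to reduce the max-tree condition to the necessary ``no incoming edges'' event, apply the conditional-independence/negative-association machinery of Section \ref{basic}, and then union-bound over candidate vertex sets $S\subseteq[\mu]$. If $S$ is the vertex set of a maximal recursive tree, then by maximality no $v\notin S$ with $v>\min S$ attaches to any $u\in S$ with $u<v$. This is precisely the ``no edge between $A$ and $B$'' event of Section \ref{basic}, with $A=S$ and $B$ the collection of outside vertices larger than $\min S$. Arguing as in \eqref{basic}--\eqref{P(A,B|W)<} (using $1-x\le e^{-x}$, together with $\sum_{k=1}^m W_{m(v-1)+k}^{-1/2}\ge m\,W_{mv}^{-1/2}$ to aggregate the $m$ left endpoints per outside vertex), I obtain
\[
\Bbb P(S\text{ is a max-tree}\,\boldsymbol|\,\bold W)\le \exp(-T_S),\quad T_S:=m\sum_{u\in S}\bigl(W_{mu}^{1/2}-W_{m(u-1)}^{1/2}\bigr)\sum_{v\notin S,\,v>u}W_{mv}^{-1/2}.
\]
For $u\in S\subseteq[\mu]$ with $\mu=o(n)$, the estimates behind \eqref{Sigma2}--\eqref{>mjn}, applied with $j_n=\mu$, yield $\sum_{v\notin S,\,v>u}W_{mv}^{-1/2}\ge (2+o(1))\sqrt{n/m}$ whp. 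Consequently, whp $T_S\ge (2+o(1))\sqrt{mn}\,\Sigma_S$, where $\Sigma_S:=\sum_{u\in S}\bigl(W_{mu}^{1/2}-W_{m(u-1)}^{1/2}\bigr)$.

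For part \textbf{(1)}, I invoke Theorem \ref{degs,m} part (2) with $a=m/(m+4)-\eps$ and $b$ just below $1/(m+4)$, which is the largest exponent consistent with $b<(1-a(m+2)/m)/2$. Uniformly in $u\le\mu$ this gives $W_{mu}^{1/2}-W_{m(u-1)}^{1/2}\ge n^{b-1/2}/(2\sqrt{m})$ whp, so $T_S\ge |S|\,n^{b}(1+o(1))$. The resulting union bound
\[
\sum_{k=1}^{\mu}\binom{\mu}{k}\exp(-k\,n^b)\le (1+e^{-n^b})^{\mu}-1=O\bigl(\mu\,e^{-n^b}\bigr)
\]
tends to zero, because $\log\mu=O(\log n)$ while $n^b$ grows polynomially in $n$.

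For part \textbf{(2)}, where $\mu\to\infty$ and $\mu=o(n)$ only, the uniform increment bound of Theorem \ref{degs,m} need not apply throughout $[\mu]$. Instead, Lemma \ref{W} gives $W_{mu}=mu\,(1+o(1))$ uniformly on $u\in[\mu/\log\log n,\,\mu]$, hence $W_{mu}^{1/2}-W_{m(u-1)}^{1/2}\ge c\sqrt{m/\mu}$ for such $u$. Since $|S|$ is comparable to $\mu$, all but $o(|S|)$ elements of $S$ lie in this range, so $\Sigma_S\ge c'\sqrt{m\mu}$ and $T_S\ge c''m\sqrt{n\mu}$. The union bound yields $2^{\mu}\exp\bigl(-\Theta(m\sqrt{n\mu})\bigr)\to 0$ since $\sqrt{n\mu}/\mu=\sqrt{n/\mu}\to\infty$ under $\mu=o(n)$. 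The main obstacle is securing a uniform lower bound on $\Sigma_S$ for arbitrary shapes of $S\subseteq[\mu]$: the clean telescoping $\Sigma_{[\mu]}=W_{m\mu}^{1/2}$ is unavailable for non-interval $S$, and it is precisely the uniform smallest-increment bound of Theorem \ref{degs,m} (in part (1)) and the Chernoff concentration of Lemma \ref{W} (in part (2)) that make the union bound succeed.
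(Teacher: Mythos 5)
Your part \textbf{(1)} argument is correct and takes a genuinely different route from the paper. The paper bounds $\Bbb P(T\boldsymbol|\,\bold W)$ for each recursive tree $T$ on $A$ (keeping the factor for the tree edges being present), then sums over all $(\nu-1)!$ trees via Lemma \ref{rectree}, picks up the penalty factors $m^{\nu-1}e^{m\nu}$, and crucially handles the randomness in the increments by computing the moment generating function $\ex\bigl[e^{-\la^{-1}((n/a)^{1/2}-1)\,\Omega_{ma}}\bigr]=(1+\la^{-1}((n/a)^{1/2}-1))^{-m}$. You discard the tree-edge requirement entirely, keep only the necessary no-incoming-edge event, and replace the MGF step by the deterministic (whp) uniform increment lower bound from Theorem \ref{degs,m}(2). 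This is cleaner; it also handles $\nu=1$ on the same footing (the paper dispatches singletons separately by citing Theorem \ref{degs,m}), and it would in fact yield the conclusion for $\mu$ up to $n^{m/(m+2)-\eps}$ rather than $n^{m/(m+4)-\eps}$.

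Part \textbf{(2)}, however, has a genuine gap. The inference from ``$W_{mu}=mu(1+o(1))$ uniformly on $u\in[\mu/\log\log n,\mu]$'' to ``$W_{mu}^{1/2}-W_{m(u-1)}^{1/2}\ge c\sqrt{m/\mu}$'' is false. The increment equals $\Omega_{mu}\big/\bigl(W_{mu}^{1/2}+W_{m(u-1)}^{1/2}\bigr)$, and Lemma \ref{W} controls only the cumulative sums $W_\nu$, not the individual blocks $\Omega_{mu}=W_{mu}-W_{m(u-1)}$. The allowed fluctuation $|W_{mu}-mu|\le (mu)^{1-\sigma}$ dwarfs a single $\Omega_{mu}$, so concentration of $W_{mu}$ says nothing about a pointwise lower bound on the increments; indeed $\Omega_{mu}\sim\mathrm{Gamma}(m,1)$ and with $\Theta(\mu)$ of them some will be far below $m$. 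Since the union bound for part (2) runs over $\exp(\Theta(\mu))$ subsets $S$, and $S$ can adversarially target the small increments, the claimed uniform bound on $\Sigma_S$ does not hold as stated. The paper sidesteps this precisely by keeping $\Omega_{ma}$ random and evaluating its MGF. Your scheme can be rescued by bounding $\Sigma_S\ge\bigl(2W_{m\mu}^{1/2}\bigr)^{-1}\sum_{u\in S}\Omega_{mu}$ and then applying the Chernoff bound of Lemma \ref{W}(2) (or Lemma \ref{W1}) to the sum $\sum_{u\in S}\Omega_{mu}$ of $|S|m$ i.i.d.\ exponentials, which concentrates on a scale of $\exp(-\Theta(\mu))$ and so survives the $\exp(\Theta(\mu))$ union bound; but that is a different calculation from the one you wrote, and it is exactly the step your proposal is missing.
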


\begin{proof} 
Given $A=\{a_1<a_2<\cdots<a_{\nu}\}$, let $T$ be a generic recursive tree on $A$. Introduce $\Bbb P(T\boldsymbol|\,\bold W)=
\Bbb P(T\text{ is a maximal recursive tree rooted at } a_1\boldsymbol|\,\bold W)$. Conditioned on $\bold W$, the events $a\leftarrow b$, ($a<b, (a,b)\in E(T)$), are independent among themselves and from the events $a\not\leftarrow b$, ($a\in A$, $b\notin A$), and the latter events are negatively associated among themselves. Further,
recall that for $a<b$, the event $a\leftarrow b$ means that at least one
of the right endpoints $R_{m(b-1)+1},\dots, R_{mb}$ has its left endpoint between $R_{m(a-1)}$ and $R_{ma}$. Therefore
\begin{align}
&\qquad\quad\Bbb P(a\leftarrow b\boldsymbol|\,\bold W)\le \sum_{i=m(b-1)+1}^{mb}\int_{R_{m(a-1)}}^{R_{ma}}\frac{dx}{2(R_i x)^{1/2}}\notag\\
&\quad =\sum_{i=m(b-1)+1}^{mb}\frac{R^{1/2}_{ma}-R^{1/2}_{m(a-1)}}{R_i^{1/2}}
\le m\,\frac{R^{1/2}_{ma}-R^{1/2}_{m(a-1)}}{R^{1/2}_{m(b-1)}}\notag\\
&\quad\qquad\qquad\qquad=\frac{m(W_{ma}^{1/2}-W_{m(a-1)}^{1/2})}{W^{1/2}_{m(b-1)}}.\label{basic2}
\end{align}
Using \eqref{basic} and \eqref{basic2} we obtain
%\begin{equation}
\begin{align}
&\qquad\qquad\Bbb P(T\boldsymbol|\,\bold W)\le \prod_{(a, b)\in E(T)\atop a<b}\!\!\!\Bbb P(a\leftarrow b\boldsymbol|\,\bold W)\cdot\prod_{a\in A, c\notin A\atop
a<c}\!\!\Bbb P(a\not\leftarrow c\boldsymbol|\,\bold W)\notag\\
&\qquad\le \prod_{(a, b)\!\!\in E(T)\atop a<b}\!\!\frac{m(W_{ma}^{1/2}-W_{m(a-1)}^{1/2})}{W^{1/2}_{m(b-1)}} \cdot\prod_{a\in A,\, c\notin A\atop
a<c}\!\! \exp\Biggl(\!-m\frac{W_{ma}^{1/2}-W_{m(a-1)}^{1/2}}{W_{mc}^{1/2}}\!\Biggr)\notag\\
&\qquad\qquad=\prod_{j=2}^{\nu} W^{-1/2}_{m(a_j-1)}\cdot \prod_{j=1}^{\nu-1}\Bigl(m(W_{ma_j}^{1/2}-W_{m(a_j-1)}^{1/2})\Bigr)^{d_{in}(a_j)}\notag\\
&\qquad\times\exp\Biggl[-m\sum_{j=1}^{\nu-1}\Bigl( W_{ma_j}^{1/2}-W_{m(a_j-1)}^{1/2} \Bigr)\,\cdot\Biggl(\sum_{c\notin A\boldsymbol{:}\, c>a_j}W_{mc}^{-1/2}\Biggr)\Biggr];\label{P(T|W)<}
\end{align}
%\end{equation}
here $d_{in}(a_j)$ is the in-degree of $a_j$ in $T$, i.e. the number of $a_j$'s neighbors in $T$, which are one edge further from the root $a_1$. Notice that the first product and the exponential factor do not depend on choice of the recursive tree $T$. The next Lemma allows
us to sum the second product over all $(\nu-1)!$ choices of the tree $T$. 
\begin{lemma}\label{rectree} Given $\bold d(\nu)=\{d_1\dots, d_{\nu}\}$, let $N(\bold d(\nu))$ be the total number of recursive trees on $[\nu]$
with the in-degree sequence $\bold d(\nu)$. If $N(\bold d(\nu))\neq 0$ then $d_{\nu}=0$ and $d_{\nu-1}\in \{0,1\}$. Let $\bold z(\nu-1)=\{z_1,\dots,z_{\nu-1}\}$ be a $(\nu-1)$-tuple of indeterminants, and 
\[
F(\bold z(\nu-1)):=\sum_{\bold d(\nu)} N(\bold d(\nu)) \prod_{i\in [\nu-1]}\!\! z_i^{d_i}.
\]
Then
\[
F(\bold z(\nu-1))=\prod_{j\in[\nu-1]}\Biggl(\sum_{i\in [j]}z_i\Biggr).
\]
\end{lemma}
\begin{proof} Deleting vertex $\nu$ we obtain a recursive tree on $[\nu-1]$. Therefore, for $d_{\nu-1}=0$,  we have a recursion
\begin{align*}
&\quad\, N(\bold d(\nu))=\sum_{j\in [\nu-2]} \Bbb I(d_j\ge 1)\cdot N(\bold d^{(j)}(\nu-1)),\\
&\bold d^{(j)}(\nu-1):=\{d_1,\dots,d_{j-1}, d_j-1, d_{j+1},\dots,d_{\nu-2}, 0\},
\end{align*}
and if $d_{\nu-1}=1$, then
\[
N(\bold d(\nu))=N(\bold d^{(\nu-1)}(\nu-1)),\quad \bold d^{(\nu-1)}(\nu-1):=\{d_1,\dots,d_{\nu-2}, 0\}.
\]
So
\begin{multline*}
F(\bold z(\nu-1))=\sum_{\bold d(\nu)\boldsymbol{:}\,d_{\nu-1}=0}\,\prod_{i\in[\nu-2]}z_i^{d_i}\sum_{j\in [\nu-2]}\Bbb I(d_j\ge 1)N(\bold d^{(j)}(\nu-1))\\
+\sum_{\bold d(\nu)\boldsymbol{:}\,d_{\nu-1}=1}\, \prod_{i\in [\nu-1]}z_i^{d_i} N(\bold d^{(\nu-1)}(\nu-1))\\
=\Biggl(\sum_{j\in [\nu-2]}z_j\Biggr) F(\bold z(\nu-2))+ z_{\nu-1}F(\bold z(\nu-2))
=\Biggl(\sum_{j\in [\nu-1]}z_j\Biggr)F(\bold z(\nu-2)).
\end{multline*}
\end{proof}
Let $\Bbb P(A\boldsymbol|\,\bold W)$ denote the conditional probability that $G_m^n$ contains a recursive tree spanning the set $A$,
and this tree is maximal in $G_m^n$.
Then $\Bbb P(A\boldsymbol|\,\bold W)\le \sum_T\Bbb P(T\boldsymbol|\,\bold W)$, and applying Lemma \ref{rectree} we obtain
\begin{equation}\label{Pi(A)<}
\begin{aligned}
&\Bbb P(A\boldsymbol|\,\bold W)\le \prod_{j=2}^{\nu} W^{-1/2}_{m(a_j-1)}\cdot \prod_{j\in [\nu-1]}\Biggl(m\sum_{i\in [j]}
\Bigl(W_{ma_i}^{1/2}-W_{m(a_i-1)}^{1/2}\Bigr)\Biggr)\\
&\times\exp\Biggl[-m\sum_{j\in[\nu-1]}\Bigl( W_{ma_j}^{1/2}-W_{m(a_j-1)}^{1/2} \Bigr)\,\cdot\Biggl(\sum_{c\notin A\boldsymbol{:}\, c>a_j}W_{mc}^{-1/2}\Biggr)\Biggr].
\end{aligned}
\end{equation}
Here, since $a_{k-1}\le a_k-1$, we have
\[
\sum_{i\in [j]}\Bigl(W_{ma_i}^{1/2}-W_{m(a_i-1)}^{1/2}\Bigr)\le \sum_{i\in [j]}\Bigl(W_{ma_i}^{1/2}-W_{ma_{i-1}}^{1/2}\Bigr)=W_{ma_j}^{1/2},
\]
so that 
\begin{align*}
B_1(n:A)&:=\prod_{j=2}^{\nu} W^{-1/2}_{m(a_j-1)}\cdot \prod_{j\in [\nu-1]}\Biggl(m\sum_{i\in [j]}
\Bigl(W_{ma_i}^{1/2}-W_{ma_{i-1})}^{1/2}\Bigr)\Biggr)\\
&\le m^{\nu-1}\prod_{j=2}^{\nu} W^{-1/2}_{ma_{j-1}}\cdot\prod_{j\in [\nu-1]}W_{ma_j}^{1/2}=m^{\nu-1}.
\end{align*}

Turn to $B_2(n;A)$, which is the exponential factor in \eqref{Pi(A)<}. Notice that
\[
\sum_{c\notin A\boldsymbol{:}\, c>a_j}W_{mc}^{-1/2}= \sum_{c>a_j}W_{mc}^{-1/2}-\sum_{i>j}W_{ma_i}^{-1/2},
\]
and, summing by parts,
\begin{align*}
&\sum_{j\in [\nu-1]}\Bigl( W_{ma_j}^{1/2}-W_{m(a_j-1)}^{1/2} \Bigr)\cdot\Biggl(\sum_{i>j}W_{ma_i}^{-1/2}\Biggr)
&=\sum_{j\in [\nu-1]}\!\!W_{ma_j}^{1/2} \cdot W_{ma_{j+1}}^{-1/2}\le \nu-1.
\end{align*}
Consequently 
\begin{align*}
&\qquad\quad\qquad B_2(n;A)\le e^{m\nu} e^{-m S(n;A)},\\
&S(n;A):=\sum_{j\in[\nu-1]}\Bigl( W_{ma_j}^{1/2}-W_{m(a_j-1)}^{1/2} \Bigr)\cdot\Biggl(\sum_{c>a_j}W_{mc}^{-1/2}\Biggr).
\end{align*}
Recall now that $W_{ma_j}^{1/2}-W_{m(a_j-1)}^{1/2}\ge \Omega_{ma_j} W^{-1/2}_{ma_j}$. Further, if 
$\la=\la(n)\to\infty$ however slowly, then by \eqref{C4}
\[
\Bbb P(W_i\le \la i,\, \forall\,i\in [mn])\ge 1-e^{-\phi(\la)}\to 1.
\]
Let us call $\Lambda$ this likely event. We see that on $\Lambda$
\begin{align*}
m S(n;A)&\ge \la^{-1} \sum_{j\in [\nu-1]} \Omega_{ma_j} a_j^{-1/2}\sum_{c>a_j} c^{-1/2}\\
&\ge \la^{-1} \sum_{j\in [\nu-1]} \Omega_{ma_j} a_j^{-1/2}2\bigl(\sqrt{n+1}-\sqrt{a_j+1}\bigr)\\
&\ge \la^{-1}\sum_{j\in [\nu-1]}\Omega_{ma_j}\left((n/a_j)^{1/2}-1\right).
\end{align*}
In summary, on the event $\Lambda$, the bound \eqref{Pi(A)<} has become
\[
\Bbb P(A\boldsymbol|\,\bold W)\le m^{\nu-1} e^{m\nu}
\exp\Biggl(\!-\la^{-1}\!\!\sum_{j\in [\nu-1]}\!\!\Omega_{ma_j}\left((n/a_j)^{1/2}-1\right)\!\!\Biggr).
\]
Denote $A\cap \Lambda$ the event that $A$ is spanned by a maximal recursive tree and $\Lambda$ holds. Since the $\nu$ variables $\Omega_{mj}$ are independent, each distributed as $\sum_{t\in [m]} w_t$,  the last bound yields 
\begin{align*}
\Bbb P(A\cap \Lambda)&\le  m^{\nu-1} e^{m\nu} \prod_{j\in [\nu-1]}\ex\Bigl[-\la^{-1}\Bigl((n/a_j)^{1/2}-1\Bigr) \sum_{t\in [m]}w_t\Bigr]\\
&= m^{\nu-1} e^{m\nu}\prod_{a\in A, a\neq \max(A)}\Bigl(1+\la^{-1}\bigl((n/a)^{1/2}-1\bigr)\Bigr)^{-m}.
\end{align*}
For $\mu\in [\nu,n]$, let $\Bbb P(\nu,\mu)$ be the probability that there exists a maximal recursive tree of size $\nu$ formed by
vertices from $\mu$. %So $\Bbb P(\nu,n)$ is the probability that $G_m^n$ contains such a tree anywhere in $[n]$. 
Summing the last bound over all  $A\subset [\mu]$ of cardinality $\nu$ we obtain
\begin{equation}\label{P(nu,mu)<}
\Bbb P(\nu,\mu)\le \Bbb P(\Lambda^c) +\mu\frac{m^{\nu-1} e^{m\nu}}{(\nu-1)!}\Biggl(\sum_{a\in [\mu]}\Bigl(1+\la^{-1}\Bigl((n/a)^{1/2}-1\Bigr)\Bigr)^{-m}\Biggr)^{\nu-1}.
\end{equation}
Here $\Bbb P(\Lambda^c)\to 0$ for $\la=\la(n)\to\infty$ however slowly. 

Consider the rest of the RHS expression, call it $Q(\nu,\mu)$. Suppose that $\mu=o(n^{\frac{m}{m+4}})$. By Theorem \ref{degs,m}, it suffices to consider $\nu>1$ only.  Let us choose $\la=\la(n)\to\infty$ such that $\la^2\mu^{\frac{m+2}{m}}=o(n)$. Then the sum in \eqref{P(nu,mu)<} is asymptotic to
\begin{align*}
\sum_{a\in\mu}\Bigl(\frac{\la^2a}{n}\Bigr)^{m/2}\sim \frac{2}{m+2}\left(\frac{\la^2\mu^{\frac{m+2}{m}}}{n}\right)^{m/2}\!\!\!\!\!\to 0,\quad n\to\infty.
\end{align*}
As $(\nu-1)!\ge 0.5 (\nu/e)^{\nu-1}$, {\it and\/} $\mu=o(n^{\frac{m}{m+4}})$, it follows easily that $\sum_{\nu=2}^{\mu}$\linebreak $Q(\nu,\mu)
\to 0$, so that
whp the set $[\mu]$ does not contain any maximal recursive tree. 

If $\mu=o(n)$ and $\mu\to\infty$, then, with $C=\frac{3me^{m+1}}{m+2}$,  we have
\begin{align*}
\mu\frac{m^{\nu-1} e^{m\nu}}{(\nu-1)!}\Biggl(\sum_{a\in [\mu]}\Bigl(\!1+\la^{-1}\Bigl(\!(n/a)^{1/2}-1\!\Bigr)\Bigr)^{-m}\!\Biggr)^{\nu-1}\!\!\!\!
\le\! \mu\nu\left(\frac{C\mu}{\nu}\Bigl(\frac{\la^2\mu}{n}\Bigr)^{m/2}\right)^{\nu-1}\!\!,%\!\!\to 0,
\end{align*}
if $\nu>(C+\eps)\mu\bigl(\la^2\mu/n\bigr)^{m/2}$. Choosing $\la\to\infty$ such that $\la^2\mu=o(n)$, we obtain that whp the set $[\mu]$ does not contain maximal recursive trees of sizes $\nu=\Theta(\mu)$.
\end{proof}
How large is the largest recursive subtree? To have a recursive subtree of size $n$ it is necessary and sufficient that the event
$\bigcap_{j>1} \mathcal L_{n,j}$ holds, where $\mathcal L_{n,j}$ is the event ``there are at most
$m-1$ loops at vertex $j$''.  Since the total vertex degree of each $G_m^t$ is $2mt$, the events $\mathcal L_{n,j}$ are independent, and
\begin{equation}\label{Hof}
\Bbb P(\mathcal L^c_{n,j})=\prod_{k=0}^{m-1}\frac{2k+1}{2(j-1)m +2k+1}=O(j^{-m}),
\end{equation}
implying that
\[
\Bbb P(\mathcal L_n)=\prod_{j=2}^n\Biggl(1-\prod_{k=0}^{m-1}\frac{2k+1}{2(j-1)m +2k+1}\Biggr)=\prod_{j=2}^n \Bigl(1-O(j^{-m})\Bigr).
\]
So, for $m>1$, $\liminf_{n\to\infty}\Bbb P(\mathcal L_n)>0$, whence with probability bounded away from $0$ the largest recursive subtree contains all $n$ vertices.  
 
Let an integer $\omega=\omega(n)\to\infty$ however slowly. Let $\mathcal L_{n,>\omega}$ be the event ``every vertex in $[n]\setminus \omega$
is on an increasing path emanating from a vertex in $[\omega]$''. On $\mathcal L_{n,>\omega}$ the set $[n]\setminus [\omega]$ is a union
of (not necessarily disjoint) sets $A_1,\dots, A_{\omega}$: vertices from $A_j$ are on increasing paths from $j$, and $\max |A_j|\ge \frac{n-\omega}{\omega}$. Furthermore $\mathcal L_{n,>\omega}=\bigcap_{j>\omega}\mathcal L_{n,j}$,
and therefore
\begin{align}
\Bbb P(\mathcal L_{n,>\omega})\!&=\!\!\prod_{j=\omega+1}^n\Biggl(\!1-\prod_{k=0}^{m-1}\frac{2k+1}{2(j-1)m +2k+1}\!\Biggr)\notag\\
&=\!\prod_{j=\omega+1}^n \!\Bigl(1-O(j^{-m})\!\Bigr)=1-O(\omega^{-m+1})\to 1,\label{omega->}
\end{align}
as $n\to\infty$. Therefore whp there exists a recursive tree of size $(1-o(1))\frac{n-\omega}{\omega}\sim \frac{n}{\omega}$ at least, i.e. whp
the size of the largest recursive tree is of order $n$.\\

\subsection{Connectedness of $G_m^n$ for $m>1$.}  
The equation \eqref{Hof} is a particular case of the formula for a more general preferential attachment graph $P\!A_n(m,\delta)$; it was used in \cite{Hof} to show that for $m>1$ whp $P\!A_n(m,\delta)$ is connected.
Of course, whp connectedness of $G_m^n$ followed directly from the likely upper bound for the diameter of $G_m^n$ established in \cite{BolRio}. A rate with which the high probability converges to $1$ remained undetermined. 

\noindent Let $\log^{(t)}$ denote the $t$-fold composition of $\log$ with itself; so $\log^{(1)}=\log$.

\begin{theorem}\label{connect} For $m>1$,  and arbitrarily small $\eps>0$,
\[
\Bbb P(G_m^n \text{ is connected})= 1-O\Biggl(\!\Biggl(\frac{(\log^{(2)}n)^{1+\eps}}{\log n}\Biggr)^{\frac{m-1}{3}}\Biggr).
\]
\end{theorem}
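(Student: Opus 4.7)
Let $\omega=\omega(n)=\lfloor(\log n/(\log^{(2)}n)^{1+\eps})^{1/3}\rfloor$, so that $\omega\to\infty$ slowly. The starting observation is that $G_m^n$ is disconnected if and only if there is a smallest vertex $j\ge 2$ lying outside the component of vertex $1$. For such $j$, by minimality $[j-1]$ lies entirely in the component of $1$, so the component of $j$ is disjoint from $[j-1]$. On the event $\mathcal L_{n,>\omega}$ of \eqref{omega->}, every $v>\omega$ has at least one non-loop backward edge and so lies in a common component with some $u\in[v-1]$; consequently the above smallest $j$ cannot exceed $\omega$. Denoting by $C_j$ the component of $j$ in $G_m^n$, this gives
\[
\Bbb P(G_m^n \text{ disconnected})\le \Bbb P(\mathcal L_{n,>\omega}^c)+\sum_{j=2}^{\omega}\Bbb P\bigl(C_j\cap[j-1]=\emptyset\bigr).
\]
By \eqref{omega->} the first term is $O(\omega^{-(m-1)})=O\bigl(((\log^{(2)}n)^{1+\eps}/\log n)^{(m-1)/3}\bigr)$, which matches the target rate.

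For $j\in[2,\omega]$, the event $\{C_j\cap[j-1]=\emptyset\}$ requires both that all $m$ backward edges at $j$ be loops (probability $O(j^{-m})$ by \eqref{Hof}) and that no later chord ever join the forward cluster of $j$ to the forward cluster of $[j-1]$. I would bound the conditional probability of the latter by invoking Theorem~\ref{Emu,nu<} (or its sharper form Theorem~\ref{ExtEmu,nu<}) with the two growing clusters playing the roles of $A,B$: conditionally on $\bold W$ the edge indicators between $A$ and $B$ are negatively associated exactly as in \eqref{basic}--\eqref{P(A,B|W)<}, and the high expected degree of vertex $j$ from Theorem~\ref{degs,m} (of order $n^{1/2}$ for $j\in[\omega]$) forces the two clusters to grow fast enough that the no-edge probability is sub-polynomially small in $n$.

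Summing over $j\in[2,\omega]$ should yield a bound of order $(\log n)^{-(m-1)}$ up to polyloglog factors for the second term. The declared choice $\omega\sim(\log n/(\log^{(2)}n)^{1+\eps})^{1/3}$ is precisely the one balancing the two contributions, and the $(\log^{(2)}n)^{1+\eps}$ slack absorbs the polyloglog corrections arising from the Gaussian-type tail \eqref{C4''} of $\bold W$ on the interval $[\omega,n]$.

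\textbf{Main obstacle.} Making the reconnection bound for $j\in[\omega]$ sharp enough is the technical crux. Theorem~\ref{Emu,nu<} is stated under the hypothesis $\mu+\nu=(1-\delta)n$ with $\delta$ bounded away from $0$, whereas the two forward clusters here cover almost all of $[n]$. One must either work with a slightly reduced pair of clusters of joint size $(1-\delta)n$ and separately dispose of the omitted $\delta n$ vertices for any small $\delta$, or strengthen the negative-association argument of Section~3 to handle the boundary case $\delta=0$ directly. Coordinating this with the rare event that $j$ is ``bad'' (all loops), and with the conditional distribution of $\bold W$ on that event, is where the bulk of the remaining work lies.
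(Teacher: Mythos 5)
Your Step 1 matches the paper: you pick the same $\omega\asymp\bigl(\log n/(\log^{(2)}n)^{1+\eps}\bigr)^{1/3}$ and use $\mathcal L_{n,>\omega}$ from \eqref{omega->} to reduce the problem to the first $\omega$ vertices, and the bound $\Bbb P(\mathcal L_{n,>\omega}^c)=O(\omega^{-(m-1)})$ gives exactly the claimed rate. The divergence — and the gap — is in Step~2.

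The paper finishes by a \emph{connector} argument: for every pair $a_1\ne a_2\in[\omega]$ it shows that, conditionally on a likely $\bold W$, the number $X(\bold a)$ of vertices $b>\omega$ with $a_1\leftarrow b$ and $a_2\leftarrow b$ has conditional mean $\gtrsim(\log^{(2)}n)^{1+\eps}$, hence by \eqref{C3} the probability of zero connectors is $\exp\bigl(-\Theta((\log^{(2)}n)^{1+\eps})\bigr)$, and the union over $\binom{\omega}{2}$ pairs is still $\ll\omega^{-(m-1)}$. This is a pointwise bound on a sum of conditionally independent indicators, with no appeal to Section 3 and no need to speak about components at all.

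Your Step 2 instead introduces $C_j$, the full component of $j$ in $G_m^n$, and proposes to bound $\Bbb P(C_j\cap[j-1]=\emptyset)$ using Theorem \ref{Emu,nu<} applied to "the two growing clusters." This does not work, and not only for the reason you flag (the $\mu+\nu=(1-\delta)n$, $\delta$ bounded away from $0$ hypothesis). Theorem \ref{Emu,nu<} is a union bound over \emph{deterministic} pairs $(A,B)$ of prescribed cardinalities. Here $C_j$ is a random set, and conditioning on $C_j=A$ for a fixed $A$ is circular: $C_j$ is \emph{defined} as a component, so the event $\{C_j=A\}$ already entails that no edge leaves $A$, and there is nothing left for the no-edge estimate to control. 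Even set-theoretically the two cluster sizes are not of order $n$ each: if $j$ is initially isolated (all $m$ loops, probability $O(j^{-m})$ by \eqref{Hof}), the component of $j$ grows from a singleton and need not reach macroscopic size — Theorem \ref{degs,m} controls the \emph{degree} of $j$, not $|C_j|$, and a degree of order $n^{1/2}$ is nowhere near $\Theta(n)$. Finally, even if one granted both points, the events "all $m$ edges at $j$ are loops" and "no subsequent merge" are not independent, and your heuristic tally "$\sum_j O(j^{-m})\times$ (merge probability)" ignores that both depend on the same $\bold W$; making this rigorous essentially forces you back to a connector-type argument. The decomposition by the smallest bad vertex $j\in[2,\omega]$ is fine in principle, but the bound on $\Bbb P(C_j\cap[j-1]=\emptyset)$ needs the connector estimate, not Theorem \ref{Emu,nu<}; as written the proposal has a genuine hole exactly where you anticipate one.
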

\noindent {\bf Note.\/} In contrast, $\Bbb P(G_1^n \text{ is connected})\sim 0.5(\pi/n)^{1/2}$.
\begin{proof} Pick $\omega=\lfloor \log^{\be} n\rfloor$, $\be>0$.  {\bf (1)\/} By \eqref{omega->}, with probability 
$1-O(\omega^{-(m-1)})$, every vertex in $[n]\setminus [\omega]$ is on an increasing path starting from a vertex in $[\omega]$. For a properly chosen $\be$, we will show {\bf (2)\/} that, if $\bold W$ is likely, then for every  two vertices $a_1\neq a_2\in [\omega]$,
with (conditional) probability $1-o(1)$ there exists a vertex $b\in [n]\setminus [\omega]$ (termed ``connector'' in \cite{Hof}) such that $a_1\leftarrow b$ and $a_2\leftarrow b$, and {\bf (3)\/} that
the $o(1)$ deficits for all  $\binom{\omega}{2}$ pairs $(a_1,a_2)$ add up to $o(1)$ still. The three items put together will imply the claim. 

Conditioned on $\bold W$, the $n-\omega$ events $\{a_1\leftarrow b,\,a_2\leftarrow b\}_{b\in [n]\setminus [\omega]}$ are independent. Let $X(\bold a)$ be the sum of these events indicators, so that $\ex[X(\bold a)\boldsymbol|\,\bold W]=\sum_{\bold b}\Bbb P(a_1\leftarrow b,\,a_2\leftarrow b\boldsymbol|\,\bold W)$. The vertex $b$ contains $m$ right endpoints $R_{m(b-1)+1},\dots, R_{mb}$, and
an event $a\leftarrow b$ holds if at least one of the left endpoints $\ell_i$, $(i\in [m(b-1)+1, mb])$ is located in the interval $(m(a-1), ma]$. Therefore
\[
\Bbb P(a_1\leftarrow b,\,a_2\leftarrow b\boldsymbol|\,\bold W)\ge\frac{W^{1/2}_{ma_1}-W^{1/2}_{m(a_1-1)}}{W^{1/2}_{m(b-1)+1}}
\cdot\frac{W^{1/2}_{ma_2}-W^{1/2}_{m(a_2-1)}}{W^{1/2}_{mb}},
\]
whence
\[
\ex[X(\bold a)\boldsymbol|\,\bold W]\ge \prod_{i=1}^2 \bigl(W^{1/2}_{ma_i}-W^{1/2}_{m(a_i-1)}\bigr)\cdot\sum_b\,\bigl(W_{m(b-1)+1}W_{mb}\bigr)^{-1/2}.
\]
By \eqref{C4''}, for $\sigma\in (0,1/2)$, we have
\[
\Bbb P\Bigl(|W_{\nu}-\nu|\le \nu^{1-\sigma},\,\forall \nu \ge \omega\Bigr)\ge 1-\exp\bigl(-\Theta(\omega^{1-2\sigma})\bigr).
\]
Therefore, with probability $1-\exp\bigl(-\Theta(\omega^{1-2\sigma})\bigr)$,
\begin{align*}
\sum_b\bigl(W_{m(b-1)+1}W_{mb}\bigr)^{-1/2}&\ge \!\bigl(1+O(\omega^{-\sigma})\bigr)\!\!\sum_{b\in [n]\setminus [\omega]}\!\!b^{-1}
= \bigl(1+O(\omega^{-\sigma})\bigr) \log n.
\end{align*}
Next
\[
W^{1/2}_{ma}-W^{1/2}_{m(a-1)}\ge \frac{\Omega_{ma}}{2W^{1/2}(ma)}\ge \frac{\Omega_{ma}}{2W^{1/2}(m\omega)}.
\]
Here $\Bbb P(W_{m\omega}\le 2m\omega)=1-e^{-\Theta(\omega)}$, and using independence of $\Omega_{ma}$, we have
\begin{align*}
\Bbb P\Bigl(\min_{a\le \omega}\Omega_{ma}\ge x\Bigr)&=\Biggl(1-\int_{z\le x}\frac{z^{m-1}}{\Gamma(m)}\,e^{-z}\,dz\Biggr)^{\omega}
=\exp\bigl(-\Theta(\omega^{-mD})\bigr)\to 1,
\end{align*}
if $x= \omega^{-1/m-D}$, ($D>0$). Therefore
\begin{equation}\label{use?}
\min_{\bold a}\ex\bigl[X(\bold a)\boldsymbol|\,\bold W\bigr]\ge (8.1m)^{-1}\frac{\log n}{\omega^{\frac{m+2}{m}+2D}},
\end{equation}
 with probability at least
\[
\min\bigl(1-e^{-\Theta(\omega^{1-2\sigma})}\bigr);\, 1-e^{-\Theta(\omega)} ; \,1-O(\omega^{-mD})\bigr)=1-O(\omega^{-mD}).
\]
($\omega$ growing faster than a power of logarithm would have rendered  the lower bound \eqref{use?} useless.)
%\[
%\Bbb P\Bigl(\min_{\bold a}\ex\bigl[X(\bold a)\boldsymbol|\,\bold W\bigr]\ge (9m)^{-1}\frac{\log n}{\omega^{\frac{m+2}{m}+2\delta}}\Bigr)=
%1-O(\omega^{-m\delta}),
Select
\[
D=\frac{m-1}{m},\quad \beta=\frac{1}{3}\left(1-\frac{(1+\eps)\log^{(3)} n}{\log^{(2)}n}\right).
\]
Then $\omega^{-mD}=\omega^{-(m-1)}$ and
\begin{align*}
\frac{\log n}{\omega^{\frac{m+2}{m}+2D}}&= (\log n)^{1-3\be}=(\log n)^{\frac{(1+\eps)\log^{(3)}n}{\log^{(2)} n}}\\
&=\exp\bigl((1+\eps)\log^{(3)}n\bigr)=(\log^{(2)}n)^{1+\eps},
\end{align*}
so that $\min_{\bold a}\ex\bigl[X(\bold a)\boldsymbol|\,\bold W\bigr]\ge (8.1 m)^{-1}(\log^{(2)}n)^{1+\eps}$.
%and if $\omega=\lfloor\log^{\beta}n\rfloor$ then, denoting $\gamma:=1-\beta\left(\frac{m+2}{m}+2D\right)$, we obtain
%\[
%\frac{\log n}{\omega^{\frac{m+2}{m}+2D}}\ge\log^{\gamma} n\to\infty,
%\]
%\ex[X(\bold a)\boldsymbol|\,\bold W]\ge \frac{\log n}{9\, m\, \omega^{\frac{m+2}{m}+2\delta}}\to \infty,
%\]
%if $\omega=\lceil\log^{\beta}n\rceil$ and  
%provided that $\gamma>0$.
%\[
%\beta<\frac{m}{m+2}\Bigl(1+\frac{2\delta m}{m+2}\Bigr)^{-1}. 
%\]
%Given $\be<\frac{m}{m+2}$, we can determine $\delta>0$ small enough that the above inequality holds. 
Applying \eqref{C3} to $X(\bold a)$, we have: with probability $1-O(\omega^{-mD})=1-O(\omega^{-(m-1)})$,
\[
\Bbb P\bigl(X(\bold a)\le (9m)^{-1}(\log^{(2)} n)^{1+\eps}\boldsymbol|\,\bold W\bigr) \le \exp\bigl(-\Theta((\log^{(2)} n)^{1+\eps})\bigr).
\]
%\ge \eps\ex\bigl[D^*(j)\boldsymbol|\,\bold W\bigr]\Big|\,\bold W\right)
%\le \exp\left(-\Theta\Bigl(\ex\bigl[X(\bold a)\boldsymbol|\,\bold W\bigr]\Bigr)\right)\\
%&\le \exp\Bigl(-\Theta(\log^{\gamma} n)\Bigr), \qquad \gamma:=1-\be\Bigl(\frac{m+2}{m}+2\delta\Bigr)>0.
%\bigl(\eps^2\Omega_{mj} n^{(1-a)/2}\log^{-1/2}n\bigr)\Bigr).
%\end{align*}
%(Needless to say, ``q.s'' relates to the last inequality.) 
Using the union bound, we obtain: with probability $1-O(\omega^{-(m-1)})$,
\begin{equation*}%\label{CondP<,conn}
\begin{aligned}
&\Bbb P\bigl(\exists\bold a:X(\bold a)\le (9m)^{-1}(\log^{(2)} n)^{1+\eps}\boldsymbol|\,\bold W\bigr)\!\!
\le \binom{\omega}{2}\!\exp\bigl(-\Theta((\log^{(2)} n)^{1+\eps})\bigr)\\
&\qquad\qquad\quad\le\exp\bigl(2\be\log^{(2)} n-\Theta((\log^{(2)} n)^{1+\eps})\bigr)\ll \omega^{-(m-1)}.
\end{aligned}.
\end{equation*}
 ``Unconditioning'' we get
\[
\Bbb P\bigl(\exists\,\bold a:\,X(\bold a)\le (9m)^{-1}(\log^{(2)} n)^{1+\eps} \bigr) =O\bigl(\omega^{-(m-1)}\bigr).
\]
Thus with probability $1-O(\omega^{-(m-1)})$ for every two vertices $a_1,\,a_2\in [\omega]$ there exists a connector $b\in [n]\setminus [\omega]$, {\it and\/}, as we mentioned at the outset, each of the vertices $b\in  [n]\setminus [\omega]$ is on an increasing path going out of a vertex $a\in [\omega]$. Therefore whp $G_m^n$ is connected. It remains to notice that
\begin{align*}
\omega^{-(m-1)}&=\exp\bigl(-(m-1)\be\log^{(2)}n\bigr)\\
&=\left(\frac{\exp((1+\eps)\log^{(3)} n)}{\log n}\right)^{\frac{m-1}{3}}=\left(\frac{(\log^{(2)}n)^{1+\eps}}{\log n}\right)^{\frac{m-1}{3}}.
\end{align*}
\end{proof}

\subsection{Maximal recursive trees in $G_1^n$.} By  Lemma \ref{exLnm=} and Theorem \ref{Ln} whp there are $(1/2+o(1))\log n$ vertices with 
a loop. Clearly each of these vertices is a root of an isolated tree component, the maximum size recursive tree formed by root's descendants in $G_1^n$.  Vertex $r$ is a root with probability $(2r-1)^{-1}$. {\it Conditioned on $r$ being a root\/},  let 
$X(t;r)$, $(t\ge r)$, denote the size of the isolated tree component of $G_1^t$ rooted at $r$. Since $r$ is fixed, we will simply write $X(t)$ instead of $X(t;r)$
in the derivations below. In particular, $X(n)$ is the size of the the tree component in $G_1^n$ rooted at $r$. The total vertex degree of the component is 
$2X(t)$. By the definition of $G_1^{t+1}$, we have: for $k\ge 1$, and $t\ge r$,
\begin{equation}\label{1}
\begin{aligned}
\Bbb E[X^k(t+1) \boldsymbol |\,G_1^t]&=(X(t)+1)^k\,\frac{2X(t)}{2t+1}+X^k(t)\left(1-\frac{2X(t)}{2t+1}\right)\\
&=\frac{2X(t)}{2t+1}\sum_{j=0}^k\binom{k}{j} X^j(t)+X^k(t)\left(1-\frac{2X(t)}{2t+1}\right)\\
&=X^k(t)+\frac{2X(t)}{2t+1}\sum_{j=0}^{k-1}\binom{k}{j} X^j(t)\\
&=X^k(t)\frac{2(t+k)+1}{2t+1}+\frac{2}{2t+1}\sum_{j=0}^{k-2}\binom{k}{j} X^{j+1}(t).\\
\end{aligned}
\end{equation}
\begin{lemma}\label{mart} Using notation $x^{(\ell)}=x(x+1)\cdots (x+\ell-1)$, we have
\[
\Bbb E\bigl[X^{(\ell)}(t+1)\boldsymbol|G_1^t\bigr]=\frac{2(t+\ell)+1}{2t+1} X^{(\ell)}(t).
\]
Consequently $X^{(\ell)}(t)\prod_{j=0}^{\ell-1}\,\bigl(2(t+j)+1\bigr)^{-1}$, $(t\ge r)$, is a martingale.
\end{lemma}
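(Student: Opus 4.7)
The plan is to redo the computation \eqref{1} with rising factorials replacing ordinary powers; the advantage is that the one-step recursion for $X^{(\ell)}(t)$ will be homogeneous, whereas \eqref{1} carries an inhomogeneous tail $\sum_{j=0}^{k-2}\binom{k}{j}X^{j+1}(t)$. My starting point is the same one-step transition as in the derivation of \eqref{1}: conditional on $G_1^t$, vertex $t+1$ joins the component rooted at $r$ (so $X(t+1)=X(t)+1$) with probability $\frac{2X(t)}{2t+1}$, since that component contributes exactly $2X(t)$ of the $2t+1$ degree-endpoints available for attachment, and otherwise $X(t+1)=X(t)$. Hence
\[
\Bbb E[X^{(\ell)}(t+1)\boldsymbol|G_1^t]=(X(t)+1)^{(\ell)}\cdot\frac{2X(t)}{2t+1}+X^{(\ell)}(t)\left(1-\frac{2X(t)}{2t+1}\right).
\]

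The key algebraic identity I would invoke is
\[
(x+1)^{(\ell)}-x^{(\ell)}=\bigl[(x+\ell)-x\bigr](x+1)(x+2)\cdots(x+\ell-1)=\ell\cdot\frac{x^{(\ell)}}{x},
\]
obtained by factoring the common string $(x+1)\cdots(x+\ell-1)$ out of both terms. Plugging this into the previous display, the $X(t)$ in the numerator of $\frac{2X(t)}{2t+1}$ cancels the $X(t)$ in the denominator of the identity, which leaves
\[
\Bbb E[X^{(\ell)}(t+1)\boldsymbol|G_1^t]=X^{(\ell)}(t)\left(1+\frac{2\ell}{2t+1}\right)=\frac{2(t+\ell)+1}{2t+1}\,X^{(\ell)}(t),
\]
which is the first assertion of the lemma.

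For the martingale claim I would set $M_t:=X^{(\ell)}(t)\big/\prod_{j=0}^{\ell-1}\bigl(2(t+j)+1\bigr)$. Shifting the product index gives the telescoping
\[
\prod_{j=0}^{\ell-1}\bigl(2(t+1+j)+1\bigr)=\frac{2(t+\ell)+1}{2t+1}\prod_{j=0}^{\ell-1}\bigl(2(t+j)+1\bigr),
\]
in which the factor $2t+1$ drops off the bottom and $2(t+\ell)+1$ appears at the top. Combining this with the recursion just derived yields $\Bbb E[M_{t+1}\boldsymbol|G_1^t]=M_t$, so $(M_t)_{t\ge r}$ is a martingale with respect to the natural filtration. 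I do not anticipate any genuine obstacle: the content of the lemma is precisely that rising factorials diagonalize this one-step recursion, so once that basis is adopted the proof reduces to the one-line cancellation above.
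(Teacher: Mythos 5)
Your proof is correct, and it is genuinely shorter than the paper's. The paper first establishes the power-basis recursion \eqref{1}, then expands $x^{(\ell)}=\sum_k s(\ell,k)x^k$ in signless Stirling numbers of the first kind, applies \eqref{1} term by term, and finally verifies the identity $\sum_{j=k}^{\ell}s(\ell,j)\binom{j}{k-1}=\ell\,s(\ell,k)$ by matching exponential generating functions via Comtet's formula \eqref{Com}. You sidestep all of that by working directly in the rising-factorial basis: the whole computation hinges on the elementary factorization
\[
(x+1)^{(\ell)}-x^{(\ell)}=(x+1)\cdots(x+\ell-1)\bigl[(x+\ell)-x\bigr]=\ell\,(x+1)\cdots(x+\ell-1),
\]
so that
\[
\Bbb E\bigl[X^{(\ell)}(t+1)\boldsymbol|\,G_1^t\bigr]
=X^{(\ell)}(t)+\frac{2X(t)}{2t+1}\cdot \ell\,(X(t)+1)\cdots(X(t)+\ell-1)
=X^{(\ell)}(t)\Bigl(1+\tfrac{2\ell}{2t+1}\Bigr),
\]
with no division issues since $X(t)\bigl[(X(t)+1)\cdots(X(t)+\ell-1)\bigr]=X^{(\ell)}(t)$ identically. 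This is exactly the ``diagonalization'' you invoke, made rigorous in one line, and it is cleaner than the paper's detour through \eqref{1} and the Stirling identity; the paper's route has the small advantage of reusing the already-derived power recursion \eqref{1}, but at the cost of an extra combinatorial lemma. Your martingale step (telescoping the normalizing product and noting the $2t+1$ cancels against the $2(t+\ell)+1$ gained) matches what the paper does implicitly. One cosmetic caution: stating the identity as $\ell\,x^{(\ell)}/x$ looks like it requires $x\neq 0$; writing it as $\ell\,(x+1)\cdots(x+\ell-1)$ before multiplying by $\frac{2X(t)}{2t+1}$ makes the cancellation manifest with no caveat, which is worth doing since in principle one wants the identity to be a polynomial identity.
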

{\bf Note.\/} That same function $x^{(\ell)}$ had been used to construct a martingale for $D(r,t)$, $(t\ge r$),  the degree of vertex $r$ in $G_m^t$, see
\cite{Bol1}, and in the $\beta$-extension of $G_1^t$, see \cite{Mor0}, \cite{Mor}. Understandably, our argument is quite different. 
\begin{proof} Recall first that
\[
x^{(\ell)}=\sum_{k=1}^{\ell} x^k s(\ell,k),
\]
where $s(\ell,k)$ is the signless, first-kind, Stirling number, i.e. the number of permutations of the set $[\ell]$ with $k$ cycles. In particular,
\begin{equation}\label{Com}
\sum_{\ell\ge 1}\eta^{\ell}\frac{s(\ell,k)}{\ell!}=\frac{1}{k!}\log^k\frac{1}{1-\eta},\quad |\eta|<1,
\end{equation}
Comtet \cite{Com}, Section 5.5. Using \eqref{1}, we have
\begin{align*}
(2t+1)&\Bbb E\bigl[X^{(\ell)}(t+1)\boldsymbol|G_1^t\bigr]=\sum_{k=1}^{\ell}s(\ell,k)\cdot
\Biggl(\!\!\bigl(2(t+k)+1\bigr) X^k(t)\\
&\quad+2\sum_{j=0}^{k-2}\binom{k}{j}X^{j+1}(t)\!\Biggr)=:\sum_{k=1}^{\ell} \sigma(\ell,k)X^k(t),\\
\sigma(\ell,k)&\!=\!\left\{\!\!\begin{aligned}
&\bigl(2(t+\ell)+1\bigr)s(\ell,\ell),&&\text{if }k=\ell,\\
&\bigl(2(t+k)+1\bigr)s(\ell,k)+2\sum_{j=k+1}^{\ell}\!\!s(\ell,j)\binom{j}{k-1},&&\text{if }k<\ell.\end{aligned}\right.
\end{align*}
We need to show that $\sigma(\ell,k)=\bigl(2(t+\ell)+1\bigr)s(\ell,k)$ for $k<\ell$, which is equivalent to
\[
ks(\ell,k)+\!\!\sum_{j=k+1}^{\ell}\!\!s(\ell,j)\binom{j}{k-1}\!=\!\ell s(\ell,k)\Longleftrightarrow \sum_{j=k}^{\ell}s(\ell,j)\binom{j}{k-1}=\ell s(\ell,k).
\]
To prove the latter identity, it suffices to show that, for a fixed $k$, the exponential generating functions of the two sides coincide. By \eqref{Com},
\begin{align*}
&\quad\sum_{\ell\ge 1}\frac{\eta^{\ell}}{\ell!}\sum_{j=k}^{\ell}s(\ell,j)\binom{j}{k-1}=\sum_{j\ge k}\binom{j}{k-1}\sum_{\ell\ge j}\frac{\eta^{\ell}}{\ell!} s(\ell,j)
\\
&=\sum_{j\ge k}\binom{j}{k-1} \frac{1}{j!}\log^j\frac{1}{1-\eta}=\frac{1}{(k-1)!}\left(\log^{k-1}\frac{1}{1-\eta}\right)\sum_{s\ge 1}\frac{1}{s!}\log^s\frac{1}{1-\eta}\\
&=\frac{1}{(k-1)!}\left(\log^{k-1}\frac{1}{1-\eta}\right)\left(\frac{1}{1-\eta}-1\right)=\frac{1}{(k-1)!}\left(\log^{k-1}\frac{1}{1-\eta}\right)\frac{\eta}{1-\eta}.
\end{align*}
And, using \eqref{Com} again,
\begin{align*}
&\sum_{\ell\ge 1}\frac{\eta^{\ell}}{\ell!}\,\ell s(\ell,k)=\eta\sum_{\ell\ge 1}\frac{\ell\eta^{\ell-1}}{\ell!}\,s(\ell,k)\\
&=\eta\frac{d}{d\eta}\Biggl(\frac{1}{k!}\log^k\frac{1}{1-\eta}\Biggr)=\frac{1}{(k-1)!}\left(\log^{k-1}\frac{1}{1-\eta}\right)\frac{\eta}{1-\eta}.
\end{align*}
\end{proof}
Since $X^{(\ell)}(t)\prod_{j=0}^{\ell-1}\bigl(2(t+j)+1\bigr)^{-1}$ is a martingale, we have
\[
\Bbb E\Bigl[X^{(\ell)}(t)\prod_{j=0}^{\ell-1}\bigl(2(t+j)+1\bigr)^{-1}\Bigr]=\ell!\prod_{j=0}^{\ell-1}\bigl(2(r+j)+1\bigr)^{-1},
\]
or equivalently
\[
\Bbb E\Biggl[\binom{X(t)+\ell-1}{\ell}\Biggr]=\prod_{j=0}^{\ell-1}\frac{2(t+j)+1}{2(r+j)+1},\quad t\ge r.
\]
So, using $X(t)\le t$, we conclude that
\begin{equation}\label{density2}
\lim_{n\to\infty} \Bbb E\left[\left(\frac{X(n)}{n}\right)^{\ell}\right]=2^{\ell}\ell!\prod_{j=0}^{\ell-1}\frac{1}{2(r+j)+1}.
\end{equation}
Therefore $n^{-1}X(n)$ converges in distribution, and with all its moments, to a random variable $Z_2$ whose moments $\Bbb E[Z_2^{\ell}]$ are given by the RHS of the equation above.  (Subindex $2$ stands for the initial degree $2$ of the root $r$.)

Recall that $X(n)$ is the size of the maximal recursive tree rooted at $r$, conditioned on $r$ looping back on itself, which happens with probability
$(2r-1)^{-1}$. Let us determine the distribution of $X(n)$ conditioned on $r$ attaching itself to one of the vertices in $[r-1]$, which happens with 
probability $2(r-1)/(2r-1)$. Introduce $Y(t)=X(t)-1/2$, so that $Y(r)=1/2$. The total vertex degree of the maximal recursive tree in $G_1^t$ rooted at $r$
is $2X(t)-1=2Y(t)$. Therefore for $t\ge r$ we have 
\[
\Bbb E[Y^k(t+1) \boldsymbol |\,G_1^t]=(Y(t)+1)^k\,\frac{2Y(t)}{2t+1}+Y^k(t)\left(1-\frac{2Y(t)}{2t+1}\right),
\]
which is the top equation in \eqref{1} with $Y$ substituting for $X$. Therefore in the notations of Lemma \ref{mart} we have
\begin{lemma}\label{mart1} $Y^{(\ell)}(t)\prod_{j=0}^{\ell-1}\,\bigl(2(t+j)+1\bigr)^{-1}$, $(t\ge r)$, is a martingale.
\end{lemma}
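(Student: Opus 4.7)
The plan is to reduce Lemma \ref{mart1} to Lemma \ref{mart} by showing that $Y(t)$ satisfies the same one-step recursion as $X(t)$ given in the top line of display \eqref{1}, namely
\[
\Bbb E[Y^k(t+1)\boldsymbol|\,G_1^t] = (Y(t)+1)^k\cdot\frac{2Y(t)}{2t+1} + Y^k(t)\cdot\left(1-\frac{2Y(t)}{2t+1}\right).
\]
Once this is established, the algebraic derivation in the proof of Lemma \ref{mart}---which used only this recursion together with the Stirling expansion $x^{(\ell)}=\sum_k s(\ell,k)x^k$ and the identity $\sum_{j=k}^{\ell}s(\ell,j)\binom{j}{k-1}=\ell\,s(\ell,k)$---transfers verbatim with $Y$ in place of $X$, yielding $\Bbb E[Y^{(\ell)}(t+1)\boldsymbol|\,G_1^t]=\frac{2(t+\ell)+1}{2t+1}Y^{(\ell)}(t)$, and the martingale assertion follows immediately.

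To justify the one-step recursion, I would first confirm the stated total-degree identity. Conditioned on $r$ attaching to some vertex in $[r-1]$, the sum of the $G_1^t$-degrees of the vertices forming the maximal recursive tree rooted at $r$ equals $2X(t)-1=2Y(t)$: the single edge from $r$ to $[r-1]$ contributes $1$ to the degree sum of the component (its other endpoint lies outside), while each of the $X(t)-1$ descendants contributes its unique incoming attachment edge, both of whose endpoints now lie inside the component, adding $2(X(t)-1)$. Under the Bollob\'as--Riordan rule with total weight $2t+1$, the conditional probability that vertex $t+1$ attaches to the component (i.e.\ selects some component vertex as its parent, thereby joining the maximal recursive tree) is exactly $2Y(t)/(2t+1)$, and is zero otherwise. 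Since in the first case $X(t+1)=X(t)+1$ whence $Y(t+1)=Y(t)+1$, and in the second case $Y(t+1)=Y(t)$, the recursion for $\Bbb E[Y^k(t+1)\boldsymbol|\,G_1^t]$ takes precisely the advertised form.

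There is no real obstacle, and the lemma is essentially a remark: the only point requiring care is the degree accounting that distinguishes the non-loop case from the loop case of Lemma \ref{mart}. In the loop case both endpoints of the root's loop lie at $r$, contributing $2$ to the component's degree sum; in the non-loop case the root's single outgoing edge contributes only $1$, producing the shift $X\mapsto X-1/2=Y$ and hence the replacement $2X\mapsto 2Y$ in the transition probability. After this bookkeeping, the Stirling-number manipulation from the proof of Lemma \ref{mart} applies without change, and the martingale property of $Y^{(\ell)}(t)\prod_{j=0}^{\ell-1}\bigl(2(t+j)+1\bigr)^{-1}$ follows.
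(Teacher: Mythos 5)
Your proposal is correct and follows exactly the paper's own route: observe that the degree sum of the max-tree is $2X(t)-1=2Y(t)$ in the non-loop case, so $Y$ satisfies the same one-step recursion as $X$ in display \eqref{1}, and then transfer the Stirling-number computation of Lemma \ref{mart} verbatim. The extra degree-bookkeeping you supply (the root's external edge contributing $1$, the $X(t)-1$ internal attachment edges contributing $2$ each) is a correct and slightly more explicit justification of the identity the paper simply states.
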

Consequently
\[
\Bbb E\Bigl[Y^{(\ell)}(t)\prod_{j=0}^{\ell-1}\bigl(2(t+j)+1\bigr)^{-1}\Bigr]=\left(\frac{1}{2}\right)^{(\ell)}\,\prod_{j=0}^{\ell-1}\bigl(2(r+j)+1\bigr)^{-1},
\]
where
\[
Y^{(\ell)}(t)=\prod_{j=0}^{\ell-1}(X(t)-1/2-j),\quad \left(\frac{1}{2}\right)^{(\ell)}=2^{-\ell}(2\ell-1)!!.
\]
Therefore
\[
\Bbb E\Bigl[\prod_{j=0}^{\ell-1}(X(t)-1/2-j)\Bigr]=2^{-\ell}(2\ell-1)!!\prod_{j=0}^{\ell-1}\frac{2(t+j)+1}{2(r+j)+1},\quad t\ge r.
\]
We conclude that 
\begin{equation}\label{density1}
\lim_{n\to\infty} \Bbb E\left[\left(\frac{X(n)}{n}\right)^{\ell}\right]=(2\ell-1)!!\prod_{j=0}^{\ell-1}\frac{1}{2(r+j)+1}.
\end{equation}
Therefore $n^{-1}X(n)$ converges in distribution, and with all its moments, to a random variable $Z_1$ whose moments $\Bbb E[Z_1^{\ell}]$ are given by the RHS of the equation above.  To identify the limiting distributions, recall that the classic beta probability distribution has density 
\[
f(x;\a,\be)=\frac{\Gamma(\a+\be)}{\Gamma(\a)\Gamma(\be)}x^{\a-1}(1-x)^{\be-1},\quad x\in (0,1),
\]
parametrized by two parameters $\a>0$, $\be>0$, and moments
\begin{equation}\label{moments}
\int_0^1 x^{\ell} f(x;\a,\be)\,dx=\prod_{j=0}^{\ell-1}\frac{\a+j}{\a+\be+j}.
\end{equation}
Comparing the RHS of \eqref{moments} with the RHSides of \eqref{density2} and of \eqref{density1}, we see that $Z_2$ is beta-distributed
with parameters $\a=1$, $\be=r-1/2$, and $Z_1$ is beta-distributed with parameters $\a=1/2$, $\beta=r$. We have proved

\begin{theorem}\label{mixture} The limiting distribution of scaled size of the max-tree rooted at vertex $r$ is the mixture of two beta-distributions, with parameters
$\a=1$, $\be=r-1/2$ and $\a=1/2$, $\beta=r$, weighted by $\frac{1}{2r-1}$ and $\frac{2(r-1)}{2r-1}$ respectively.
\end{theorem}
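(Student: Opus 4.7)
The plan is to establish convergence in distribution via the method of moments, after decomposing the size of the max-tree at $r$ according to the event at time $t=r$: either $r$ loops onto itself (probability $1/(2r-1)$) or $r$ attaches to some vertex in $[r-1]$ (probability $2(r-1)/(2r-1)$). Conditionally on either event, the subsequent growth process $\{X(t)\}_{t\ge r}$ is Markovian with transition rule \eqref{1} and initial value $X(r)=1$, and the two cases can be analyzed separately before being recombined by total probability.

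For each conditional regime I would invoke the martingales of Lemmas \ref{mart} and \ref{mart1}. In the loop regime, $X^{(\ell)}(t)\prod_{j=0}^{\ell-1}(2(t+j)+1)^{-1}$ is a martingale, so optional stopping (or direct computation) gives $\Bbb E[X^{(\ell)}(n)] = \ell!\prod_{j=0}^{\ell-1}\frac{2(n+j)+1}{2(r+j)+1}$; dividing by $n^\ell$ and using $X(n)\le n$ to pass the limit inside the expectation yields the asymptotic moments \eqref{density2} of the limit $Z_2$. In the non-loop regime the identical argument applied to $Y(t)=X(t)-1/2$ with $Y(r)=1/2$ gives the moments \eqref{density1} of the limit $Z_1$.

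The remaining step is to recognize these two moment sequences as Beta moments via \eqref{moments}. For $\alpha=1$, $\beta=r-1/2$ one has $\prod_{j=0}^{\ell-1}\frac{1+j}{r+1/2+j}=\ell!\,2^\ell\prod_{j=0}^{\ell-1}(2(r+j)+1)^{-1}$, matching the loop-case limit; for $\alpha=1/2$, $\beta=r$ one has $\prod_{j=0}^{\ell-1}\frac{1/2+j}{r+1/2+j}=(2\ell-1)!!\prod_{j=0}^{\ell-1}(2(r+j)+1)^{-1}$, matching the non-loop-case limit. Since both Beta distributions are supported on the bounded interval $[0,1]$, they are uniquely determined by their moment sequences, and so the method of moments upgrades moment convergence to weak convergence of the two conditional laws. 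Applying total probability with weights $1/(2r-1)$ and $2(r-1)/(2r-1)$ then assembles the claimed mixture.

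There is no single difficult step remaining: the real content is packaged in the martingale identities of Lemmas \ref{mart} and \ref{mart1}, which have already been established. What remains is verification of the two Beta moment identities, an invocation of moment-determinacy on a bounded interval to pass from moment convergence to weak convergence, and a final application of total probability to produce the mixture with the stated weights.
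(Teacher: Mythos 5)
Your proposal is correct and follows essentially the same route as the paper: the same decomposition on whether $r$ loops or attaches to an older vertex, the same two martingales (Lemmas \ref{mart} and \ref{mart1}) to obtain the factorial moments, the same passage to limits via $X(n)\le n$, the same identification of the two moment sequences as those of $\text{Beta}(1,r-1/2)$ and $\text{Beta}(1/2,r)$ via \eqref{moments}, and the same assembly of the mixture by total probability. The Beta moment identities you record are exactly those the paper uses implicitly in comparing \eqref{density2} and \eqref{density1} with \eqref{moments}.
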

\bigskip

{\bf Note.\/} $\{G_1^t\}$ is a special case of the graph process $\{G_{1,\delta}^t\}$, $\delta\ge -1$, see \cite{Hof}. Like $G_1^1$, $G_{1,\delta}^1$ consists of a single vertex $1$ with a single self-loop. Recursively, conditioned on $G_{1,\delta}^t$, the new vertex $t+1$ forms a self-loop with probability
$\frac{1+\delta}{t(2+\delta)+(1+\delta)}$, and attaches itself to a vertex $i\in [t]$ with probability proportional to the degree of $i$ in $G_{1,\delta}^t$ plus
$\delta$. Only minor modifications are needed to prove the following $\delta$-extension of Theorem \ref{mixture}.
\begin{theorem}\label{mix,ext} The limiting distribution of scaled size of the max-tree rooted at vertex $r$ is the mixture of two beta-distributions, with parameters
$\a=1$, $\be=r-1+\frac{1+\delta}{2+\delta}$ and $\a=\frac{1+\delta}{2+\delta}$, $\beta=r$, weighted by $\frac{1+\delta}{(2+\delta)r-1}$ and 
$\frac{(2+\delta)(r-1)}{(2+\delta)r-1}$ respectively.
\end{theorem}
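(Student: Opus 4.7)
The plan is to imitate the proof of Theorem \ref{mixture}, tracking how the normalizations shift under the $\delta$-extension via a single constant $c:=\frac{1+\delta}{2+\delta}$. With this choice, the total PA-weight at time $t$ equals $(2+\delta)t+(1+\delta)=(2+\delta)(t+c)$. The root probabilities then fall out immediately: vertex $r$ forms a self-loop with probability $\frac{1+\delta}{(2+\delta)(r-1)+(1+\delta)}=\frac{1+\delta}{(2+\delta)r-1}$, and otherwise attaches to $[r-1]$, giving the two mixing weights claimed in the theorem.

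Case (i), $r$ self-loops. The tree component at time $t\ge r$ has $X(t)$ vertices, total in-tree vertex-degree $2X(t)$, and PA-weight $(2+\delta)X(t)$, so $\Bbb P(t+1\text{ attaches to the tree}\boldsymbol|\,G_{1,\delta}^t)=\frac{X(t)}{t+c}$. Inserting this in place of $\frac{2X(t)}{2t+1}$ throughout \eqref{1} gives
\begin{equation*}
\Bbb E\bigl[X^k(t+1)\boldsymbol|\,G_{1,\delta}^t\bigr]=X^k(t)\,\frac{t+c+k}{t+c}+\frac{1}{t+c}\sum_{j=0}^{k-2}\binom{k}{j}X^{j+1}(t).
\end{equation*}
The Stirling-number identity used in the proof of Lemma \ref{mart} does not depend on the additive constant in the normalizer, so it applies verbatim and shows that $X^{(\ell)}(t)\prod_{j=0}^{\ell-1}(t+c+j)^{-1}$ is a martingale for $t\ge r$. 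Since $X(r)=1$ so $X^{(\ell)}(r)=\ell!$, taking expectations and passing $n\to\infty$ (using $X(t)\le t$ for uniform integrability) yields
\begin{equation*}
\lim_{n\to\infty}\Bbb E\!\left[(X(n)/n)^{\ell}\right]=\ell!\prod_{j=0}^{\ell-1}\frac{1}{r+c+j},
\end{equation*}
which matches the $\ell$-th moment of $\mathrm{Beta}(1,\,r-1+c)=\mathrm{Beta}\bigl(1,\,r-1+\tfrac{1+\delta}{2+\delta}\bigr)$.

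Case (ii), $r$ attaches to $[r-1]$. The recursive tree rooted at $r$ has $X(t)$ vertices and $X(t)-1$ internal edges, so the sum of in-tree degrees is $2X(t)-1$ and the PA-weight is $(2+\delta)X(t)-1$. Setting $Y(t):=X(t)-\tfrac{1}{2+\delta}$, the attachment probability becomes $\frac{Y(t)}{t+c}$---exactly the form from case (i)---with $Y(r)=1-\tfrac{1}{2+\delta}=c$. Hence $Y^{(\ell)}(t)\prod_{j=0}^{\ell-1}(t+c+j)^{-1}$ is a martingale with initial value $c^{(\ell)}\prod_{j=0}^{\ell-1}(r+c+j)^{-1}$. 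Because $|X(t)-Y(t)|=\tfrac{1}{2+\delta}$, $X(t)/t$ and $Y(t)/t$ share a common limit $Z_1$ in distribution and moments, with
\begin{equation*}
\Bbb E[Z_1^{\ell}]=\prod_{j=0}^{\ell-1}\frac{c+j}{r+c+j},
\end{equation*}
identifying $Z_1\sim \mathrm{Beta}(c,r)=\mathrm{Beta}\bigl(\tfrac{1+\delta}{2+\delta},\,r\bigr)$. Combining (i) and (ii) with the mixing weights from the start gives the theorem.

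The main obstacle is pinning down the correct shift $\tfrac{1}{2+\delta}$ for $Y(t)$ in case (ii), which at $\delta=0$ must recover the shift $1/2$ from Lemma \ref{mart1}; once this shift and the single constant $c=(1+\delta)/(2+\delta)$ are identified, the remainder is a cosmetic substitution of $2t+1\!\to\!t+c$, $1/2\!\to\!c$, and $2(r+j)+1\!\to\!r+c+j$ in the proof of Theorem \ref{mixture}.
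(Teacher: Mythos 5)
Your proof is correct and is precisely the ``minor modification'' of the proof of Theorem~\ref{mixture} that the paper alludes to but does not write out: replace the normalizer $2t+1$ by $(2+\delta)(t+c)$ with $c=\tfrac{1+\delta}{2+\delta}$, observe that the tree's PA-weight is $(2+\delta)X(t)$ (resp.\ $(2+\delta)X(t)-1$) in the self-loop (resp.\ attaching) case, and note that the Stirling identity $\sum_{j=k}^{\ell}s(\ell,j)\binom{j}{k-1}=\ell\,s(\ell,k)$ is insensitive to the additive shift, so $X^{(\ell)}(t)\prod_{j=0}^{\ell-1}(t+c+j)^{-1}$ (resp.\ $Y^{(\ell)}(t)\prod_{j=0}^{\ell-1}(t+c+j)^{-1}$) is still a martingale. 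Your resulting moment limits $\ell!\prod_{j=0}^{\ell-1}(r+c+j)^{-1}$ and $\prod_{j=0}^{\ell-1}\frac{c+j}{r+c+j}$ correctly reduce to \eqref{density2} and \eqref{density1} at $\delta=0$ and match the claimed beta parameters via \eqref{moments}.
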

\bigskip

{\bf Acknowledgment.\/} I am grateful to  Huseyin Acan for many discussions in the course of our
joint research on the random $m$-out, age-biased graph \cite{AcaPit}, broadly close to the preferential attachment graph.
I thank the participants of Combinatorics/Probability seminar (Ohio State Univ.)  for encouraging discussion of this study. The last subsection provides  an answer to a question asked by David Sivakoff.

\end{document}